\let\oldmarginpar\marginpar
\renewcommand\marginpar[1]{\oldmarginpar[\raggedleft\footnotesize #1]%
{\raggedright\footnotesize #1}}
\renewcommand{\setminus}{{\smallsetminus}}
\newcommand{\bdy}{{\partial}}
\newcommand{\abs}[1]{{\left\vert #1 \right\vert}}
\theoremstyle{definition}
\theoremstyle{plain}
\newtheorem{theorem}{Theorem}[section]
\newtheorem{corollary}[theorem]{Corollary}
\newtheorem{lemma}[theorem]{Lemma}
\newtheorem{prop}[theorem]{Proposition}
\newcommand{\lcomb}{\ell_c}
\newtheorem*{namedtheorem}{\theoremname}
\newcommand{\theoremname}{testing}
\newenvironment{named}[1]{\renewcommand{\theoremname}{#1}\begin{namedtheorem}}{\end{namedtheorem}}
\theoremstyle{definition}
\newtheorem{define}[theorem]{Definition}
\newtheorem{example}[theorem]{Example}
\newtheorem{algorithm}[theorem]{Algorithm}
\begin{document}
\title[Crosscap numbers and the Jones polynomial ]{Crosscap numbers and the Jones polynomial}
\author[E. Kalfagianni]{Efstratia Kalfagianni}
\author[C. Lee]{Christine Ruey Shan Lee}

\address[]{Department of Mathematics, Michigan State University, East
Lansing, MI, 48824}

\email[]{kalfagia@math.msu.edu}

\address[]{Department of Mathematics, The University of Texas at Austin, Austin, TX 78712}

\email[]{leechr29@msu.edu}

\thanks{{This research was supported in part by NSF grants DMS--1105843 and DMS-1404754 and by  the
Erwin Schr\"odinger International Institute for Mathematical Physics.}}
\thanks{Lee is  supported by an NSF Research Postdoctoral Fellowship (DMS-1502860). }

\thanks{ \today}

\begin{abstract} We give sharp two-sided linear bounds of the crosscap number (non-orientable genus)  of  alternating links
in terms of their Jones polynomial. Our estimates are often exact and we use them to calculate the crosscap numbers for several infinite families
of alternating links and for
several alternating knots with up to twelve crossings.  We also discuss generalizations of our results for classes of non-alternating 
links. \end{abstract}

\maketitle

\section{Introduction}

The goal of this paper is to give
 two-sided linear bounds of the \emph{crosscap number} (i.e. the non-orientable genus)  of an alternating link
in terms of coefficients of the Jones polynomial of the link.  We show that both of these bounds are  sharp
and often they give the exact value of the crosscap number. As an application we calculate the crosscap number of several
infinite families of alternating links.
We also check that our bounds give the crosscap numbers of 283
alternating knots with up to twelve crossings that were previously unknown.
Finally, we generalize our results to classes of non-alternating links.

To state our results,
for a link $K \subset S^3$, let
$$J_K(t)= \alpha_K t^n+ \beta _Kt^{n-1}+ \ldots + \beta' _Kt^{s+1}+ \alpha'_K
t^s$$  denote the Jones polynomial of $K$, so that $n$ and $s$
denote the highest and lowest power in $t$. Set 
$$T_K:= \abs{\beta_K} + \abs{\beta'_K},$$
\noindent where $\beta_K$ and
$\beta'_K$ denote 
 the
second and penultimate coefficients of $J_K(t)$, respectively.  Also
let $s_K= n-s$, denote the degree span of $J_K(t)$.

The \emph{crosscap number} of a non-orientable surface with $k$ boundary components is defined to be
$2-\chi(S)-k$. The \emph{crosscap number} of a link $K$ is the minimum crosscap number over all non-orientable surfaces 
spanning $K$.

\begin{theorem}\label{thm:cupjones}
 Let $K$ be a non-split, prime alternating link  with $k$-components and  with crosscap number $C(K)$.
Suppose that $K$ is not a $(2,p)$ torus link.
We have

$$ \left\lceil \frac{T_K}{3}\right\rceil  +2-k\; \leq \; C(K) \; \leq \; T_K+2-k,
  $$

\noindent where $T_K$ is  as above, and
$\lceil \cdot \rceil$ is the ceiling function that rounds up to the nearest larger integer.
Furthermore, both  bounds are sharp.
\end{theorem}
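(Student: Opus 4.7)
The plan is to prove the two inequalities separately, using as a bridge the identity $T_K = t(D)$, where $t(D)$ is the twist number of any reduced alternating diagram $D$ of $K$, an identity due to Dasbach--Lin.

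For the upper bound, I would produce an explicit non-orientable spanning surface $F$ of $K$ with $\chi(F) = k - T_K$, so that $C(K) \leq 2 - \chi(F) - k = T_K + 2 - k$. The construction begins with one of the two checkerboard surfaces of $D$ and modifies it at each of the $t(D)$ twist regions by a local crosscap-introducing move in the spirit of Adams--Kindred, dropping the Euler characteristic by one at each such region. The hypothesis that $K$ is not a $(2,p)$ torus link is precisely what ensures that the resulting surface is non-orientable; in the excluded case the checkerboard surfaces and their natural modifications stay orientable, and the crosscap number has to be estimated separately.

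For the lower bound, let $F$ be an essential non-orientable spanning surface realizing $C(K)$. Cutting $M = S^3 \setminus N(K)$ along $F$ and applying the Agol--Storm--Thurston sutured-manifold decomposition to $M \cut F$ reduces the problem to a two-sided estimate. On one side, the window piece of $M \cut F$ has Euler characteristic bounded by a multiple of $|\chi(F)|$, using Menasco's polyhedral description of alternating link complements to control how $F$ can sit inside each polyhedron. On the other side, $|\chi(\guts(M \cut F))|$ is bounded below in terms of $T_K$ via an analysis of the reduced state graphs $\GRA$ and $\GRB$ of $D$. Together with $\chi(M \cut F) = 2\chi(F)$, this yields $3|\chi(F)| \geq T_K$, which rearranges to the claimed lower bound after taking the ceiling.

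The main obstacle is the lower bound, and specifically the sharp factor of $1/3$. Achieving it requires a delicate twist-region-by-twist-region analysis of how a non-orientable essential surface can intersect the Menasco decomposition; the worst-case contribution of each twist region to the window piece is what produces the $3$ in the denominator. Sharpness of both bounds is then verified by exhibiting explicit alternating families (for example suitable pretzel and Montesinos families) realizing each of the two extremes, which also serves to demonstrate that neither inequality can be improved without further hypotheses on $K$.
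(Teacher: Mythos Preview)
Your bridge via Dasbach--Lin, $T_K=t(D)$ for a prime twist-reduced alternating diagram, is exactly what the paper uses. Beyond that, both halves of your plan diverge from the paper's argument and each has a real gap.

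\textbf{Upper bound.} The paper does not build a surface by modifying a checkerboard surface with crosscap moves. It simply applies the Adams--Kindred algorithm to $D$, obtaining a maximal-$\chi$ state surface $S$, and then counts: with $c_2$ crossings in multi-crossing twist regions, $v_b$ bigon state circles, $c_1$ single-crossing twist regions, and $v_{nb}\geq 1$ non-bigon state circles, one has $-\chi(S)=c_2-v_b+c_1-v_{nb}\leq t-1$. Theorem \ref{AK} then gives $C(K)\leq t+2-k$, the extra $+1$ coming from the orientable case. Your claimed $\chi(F)=k-T_K$ is not what is needed and is not what a checkerboard modification obviously produces. Also, the exclusion of $(2,p)$ torus links is not about orientability of the modified surface; it is simply that such links have $t=1$, while everything downstream (the augmented-link construction and the lower bound) requires $t\geq 2$.

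\textbf{Lower bound.} This is where the approaches differ most. The paper does \emph{not} cut $E(K)$ along a crosscap-realizing surface and analyze guts and windows \`a la Agol--Storm--Thurston. Instead it passes to an augmented link $L$: Lemma \ref{incomplement} arranges an Adams--Kindred surface $S$ to miss the crossing circles; Lemma \ref{normal} then shows that either $S$ can be normalized in the angled polyhedral decomposition of $E(L)$, or one falls into the $C(K)=2g(K)+1$ case with a normalizable Seifert surface. The sharp factor $1/3$ comes from the combinatorial Gauss--Bonnet formula $a(S)=-2\pi\chi(S)$ together with the Futer--Purcell length estimate $\ell_c(\gamma)\geq m\pi/3$ (Lemma \ref{gen-estimate}); since $K$ passes through each twist region twice, summing gives $-\chi(S)\geq t/3$.

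Two concrete problems with your route: first, you assume an \emph{essential} non-orientable surface realizing $C(K)$, but such a surface need not exist (the paper explicitly notes $7_4$ as an example where the crosscap-minimizer is $\partial$-compressible), and the paper's Lemma \ref{normal} is precisely the device that circumvents this. Second, your identity $\chi(M\cut F)=2\chi(F)$ is incorrect for a link exterior; one has $\chi(M\cut F)=\chi(M)+\chi(F)=\chi(F)$, so the bookkeeping you propose does not close up to give $3|\chi(F)|\geq T_K$. A guts-style argument might be made to work, but it would require substantial new input to handle non-essential minimizers and to pin down the constant; the augmented-link combinatorial-area method gets the sharp $1/3$ directly from the rectangular-cusped geometry.
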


By a result of Menasco \cite{menasco:incompress} a link with a connected, prime alternating diagram that is not 
the standard diagram of a $(2, p)$ torus link is non-split, prime and non-torus link. Hence the hypotheses of Theorem 
\ref{thm:cupjones} are easily checked from alternating diagrams.

In the 80s  Kauffman \cite{Kaufjones} and Murasugi \cite{murasugitait}
showed   that  the degree span of the Jones polynomial  determines the crossing number of alternating links. More recently,
Futer, Kalfagianni and Purcell showed
 that
coefficients 
of the colored Jones polynomials 
contain information about
 incompressible surfaces in the link complement and have  strong relations to 
 geometric structures  and in particular
to hyperbolic geometry  \cite {fkp:qsf, fkp,  fkp:survey}. For instance,  certain coefficients
of the polynomials coarsely determine the volume of large classes of hyperbolic links \cite{fkp:filling, fkp:farey},
including hyperbolic alternating links as shown by Dasbach and Lin  \cite{dasbach-lin:volumish}. 
In fact, the Volume Conjecture \cite{murakamisurvey} predicts that certain asymptotics of the colored Jones polynomials  determine the
volume of all hyperbolic links.
Furthermore, it has been conjectured that the degrees of  the colored Jones polynomials  determine
slopes of incompressible surfaces in the link complement
\cite{garoufalidis:jones-slopes}. 
Theorem \ref{thm:cupjones} gives a new relation of the Jones polynomial to a fundamental topological knot invariant
and prompts several interesting
questions about the topological content  of
 quantum link invariants. See discussion in Section \ref{secfive}.

Upper bounds of the knot crosscap number have been previously discussed in the literature.
 Clark \cite{clark} observed that for  \emph{any}  knot $K$,  we have $C(K)\leq 2g(K)+1$, where
 $g(K)$ is the orientable genus of $K$.
 Murakami and Yasuhara \cite{upper} showed that 
 $$C(K)\leq \left\lfloor{ \frac{c(K)}{2}}\,\right\rfloor,$$
 where $c(K)$ is the crossing number of $K$ and 
 $\lfloor \cdot \rfloor$ is the floor function that rounds up to the nearest smaller integer. Both of these bounds upper bounds are known to be sharp.
 
 To the best of our knowledge, before the results of this paper, the only known lower bound for $C(K)$, of an alternating link $K$,
was that $C(K)>1$, unless $K$ is a $(2, p)$ torus link.\footnote{
A lower bound for the 4-dimensional  crosscap number of \emph{all} knots, and thus for the 3-dimensional crosscap number,
was given by Batson \cite{batson}. For alternating knots, however, this bound is non-positive.}

Combining Theorem  \ref{thm:cupjones} with the results of \cite{upper}
and \cite{Kaufjones}, we have the following.
 
 \begin{theorem}\label{thm:cupjonesknots}
 Let $K$ be an alternating, non-torus knot  with crosscap number $C(K)$ and let $T_K$ be as above.
We have

$$ \left\lceil \frac{ T_K}{3}\right\rceil  + 1\; \leq \; C(K) \; \leq \;  {\rm {min}}{ \left\{ T_K + 1, \
\left\lfloor{ \frac{s_K}{2}}\,\right\rfloor \right\}}
$$
where $s_K$  denotes the degree span of $J_K(t)$.
Furthermore, both bounds are sharp.
 \end{theorem}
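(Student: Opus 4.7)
The plan is to deduce Theorem \ref{thm:cupjonesknots} essentially as a corollary of Theorem \ref{thm:cupjones}, specialized to knots, combined with two classical inputs: the Murakami--Yasuhara upper bound $C(K)\le \lfloor c(K)/2\rfloor$, and Kauffman's theorem that for an alternating link $s_K = c(K)$, where $c(K)$ is the crossing number. The hypothesis that $K$ is a non-torus alternating knot places it in the scope of Theorem \ref{thm:cupjones}, since by Menasco's theorem a connected prime alternating diagram (which every alternating non-torus knot admits) represents a non-split, prime, non-torus link.

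First I would set $k=1$ in Theorem \ref{thm:cupjones}. This immediately yields
\[
\left\lceil \frac{T_K}{3}\right\rceil + 1 \;\le\; C(K) \;\le\; T_K + 1,
\]
which gives both the lower bound and one of the two quantities in the minimum on the right-hand side. The sharpness of the lower bound in Theorem \ref{thm:cupjonesknots} is then inherited directly from the sharpness assertion in Theorem \ref{thm:cupjones} (restricted to the knot case), and similarly the $T_K+1$ half of the upper bound is sharp for the same reason.

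Second, to insert the $\lfloor s_K/2\rfloor$ term into the upper bound, I would invoke the Murakami--Yasuhara inequality $C(K)\le \lfloor c(K)/2\rfloor$ for arbitrary knots, and then use Kauffman's equality $s_K = c(K)$, valid because $K$ is alternating. Substituting gives $C(K)\le \lfloor s_K/2\rfloor$. Combining with the bound $C(K)\le T_K+1$ from the previous step produces the minimum in the statement. Sharpness of the $\lfloor s_K/2\rfloor$ half of the upper bound follows from the known sharpness of Murakami--Yasuhara's estimate; one can exhibit alternating knots (for example, suitable pretzel or two-bridge knots whose crosscap number equals $\lfloor c(K)/2\rfloor$) for which this bound is attained and dominates $T_K+1$.

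I do not anticipate any serious obstacle: the content of Theorem \ref{thm:cupjonesknots} is a clean packaging of Theorem \ref{thm:cupjones} with two off-the-shelf facts. The only point that requires a small comment is verifying that both displayed inequalities in the statement are \emph{individually} sharp on the class of alternating non-torus knots, rather than merely sharp in combination; this is handled by producing one example where $T_K+1$ realizes the minimum and is attained (already furnished by Theorem \ref{thm:cupjones}) and a second example where $\lfloor s_K/2\rfloor$ realizes the minimum and is attained (furnished by the Murakami--Yasuhara literature or the explicit families computed later in the paper).
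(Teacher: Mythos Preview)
Your argument is essentially the paper's own for \emph{prime} alternating knots, but there is a genuine gap in the composite case. Theorem~\ref{thm:cupjones} has the hypothesis that $K$ is prime, while Theorem~\ref{thm:cupjonesknots} is stated for \emph{all} alternating non-torus knots. Your justification that ``every alternating non-torus knot admits a connected prime alternating diagram'' is false: by Menasco's theorem a prime alternating diagram represents a prime knot, so a composite alternating knot such as $3_1\#3_1$ (which is alternating and non-torus) admits no prime alternating diagram at all. Hence you cannot invoke Theorem~\ref{thm:cupjones} for such knots, and your lower bound argument does not cover them.

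The paper closes this gap explicitly. For the lower bound it uses the Murakami--Yasuhara inequality $C(K\#K')\geq C(K)+C(K')-1$ together with the behavior of $T_K$ under connect sum (via multiplicativity of the Jones polynomial) to reduce the composite case to the prime one. For the upper bound it appeals to additivity of both the degree span and the crossing number of alternating knots under connect sum to extend the $\lfloor s_K/2\rfloor$ estimate. You should insert this reduction step; once you do, the rest of your outline matches the paper's proof.
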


 Crowell \cite{crowell} and Murasugi \cite{murasugi} have independently shown that the orientable genus of an alternating knot is equal to half the degree span of the
Alexander polynomial of the knot. Theorem \ref{thm:cupjonesknots} can be thought of as the non-orientable analogue of  this classical result. We will have more to say about this in Section \ref{secfive}.

The orientable link genus has been well studied, and a general algorithm for  calculation, using normal surface theory,  is known  \cite{genus, genusalg}. For low crossing number knots, effective computations can also be made from genus bounds coming from invariants such as the Alexander polynomial and the Heegaard Floer homology \cite{knotinfo}.
Crosscap numbers, however, are   harder to compute. Although the crosscap numbers of several special  families of knots are known
(\cite{pretzel, torus, 2-bridge}), no effective general method  of calculation is known.
Some progress in this direction
was made by Burton and Ozlen \cite{burton} using normal surface theory and integer programming. However, at the time this writing, there is no-known normal surface
algorithm to determine crosscap numbers.
In particular, for the majority of prime knots up to twelve crossings the crosscap numbers are listed as unknown
in  Knotinfo \cite{knotinfo}.

For alternating links, a method to compute crosscap numbers was given by Adams and Kindred \cite{Adamsstate}.  They showed
that to compute  the crosscap number of an alternating link it is enough to find the minimal crosscap number realized by \emph{state surfaces} corresponding
to alternating link diagrams.   Note that for a link with $n$-crossings, there  correspond $2^{n}$ state surfaces that, \emph{a priori} one has to search and select one with minimal crosscap number. The algorithm of \cite{Adamsstate} cuts down significantly this number, but the number of surfaces that one has
to deal with still grows fast as the number of the ``non-bigon" regions of the alternating link does.
The advantage of Theorem \ref{thm:cupjones}  is that it provides estimates that are easy to calculate from any alternating diagram and, as mentioned above, these estimates often compute the exact crosscap number.  Indeed, the quantity $T_K$ is particularly easy to calculate
from any alternating knot diagram  as each of $\beta_K, \beta_K'$ can be calculated from the checkerboard 
graphs of the diagram \cite{dfkls:determinant}.  
We've checked that our lower bound  improves the lower bound given in Knotinfo \cite{knotinfo} 
for 1472 prime alternating knots for which the crosscap number is listed as unknown
and for 283 of these knots our bounds determine the exact value of the crosscap number. See Section \ref{calculations}.

In the proof of Theorem \ref{thm:cupjones} we make use of the results of \cite{Adamsstate}.
We show that given an alternating diagram $D(K)$, a spanning surface from which the crosscap number of $K$ is easily determined,
 can be taken to lie in the complement of an \emph{augmented link}
obtained from $D(K)$. Then, we use a construction essentially due to Adams \cite{Adamsaug}, presented by  Agol, D. Thurston \cite[Appendix]{lackenby:volume-alt},
 ideas due to Casson and Lackenby \cite{lack-surg} and a result of
Futer and Purcell \cite{futer-purcell}, 
to prove  Theorem \ref{thm:cupjones}. In particular, we make use of the fact that augmented link complements admit angled polyhedral decompositions with several nice combinatorial and geometric features.
We employ normal surface theory  and a combinatorial version of a Gauss-Bonnet theorem to estimate the Euler characteristic of surfaces that realize crosscap numbers of alternating links.
Using these techniques we show that for prime alternating links, the crosscap number is bounded in terms of the \emph{twist number}
of any prime, twist-reduced, alternating projection (for the definitions see Section \ref{sectwo}). 

In particular, for knots we have the following:

\begin{theorem}\label{thm:chi-etimate2}
Let $K \subset S^3$ be a knot with a prime, 
twist-reduced alternating diagram  $D(K)$. Suppose that $D(K)$ has $t \geq 2$ twist
regions and let  $C(K)$ denote the crosscap number of $K$.
We have

$$1+ \left\lceil \frac{ t}{3}\right\rceil \; \leq \; C(K) \; \leq \;  {\rm {min}}{ \left\{ t+1, \ 
\left\lfloor{ \frac{c}{2}}\,\right\rfloor \right\}}
$$
where $c$ denotes the number of crossings of $D$.
Furthermore, both bounds are sharp.
\end{theorem}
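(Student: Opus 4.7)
The plan is to deduce Theorem \ref{thm:chi-etimate2} directly from Theorem \ref{thm:cupjonesknots} by translating the Jones-polynomial quantity $T_K$ into the diagrammatic twist number $t$. The bridge is a well-known identity for alternating diagrams: if $D(K)$ is a prime, twist-reduced alternating diagram of a knot $K$ that is not a $(2,p)$ torus knot, then each of the second and penultimate coefficients of $J_K(t)$ admits a combinatorial description from the checkerboard (Tait) graphs of $D(K)$. Summing the absolute values of these coefficients counts exactly the twist regions of $D(K)$; that is, $T_K=t$. This identity is due to Dasbach--Lin (see the computations from the checkerboard graphs used in \cite{dasbach-lin:volumish, dfkls:determinant}), and the hypothesis $t\geq 2$ together with primality and twist-reducedness guarantees that $D(K)$ is not a $(2,p)$ torus diagram, so Theorem \ref{thm:cupjonesknots} applies.

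With $T_K=t$ in hand, the lower bound $1+\lceil t/3\rceil\leq C(K)$ and the upper bound $C(K)\leq t+1$ are immediate from Theorem \ref{thm:cupjonesknots}. For the second half of the minimum on the right, note that by Kauffman--Murasugi \cite{Kaufjones, murasugitait}, the crossing number of $K$ equals the number of crossings in any reduced alternating diagram, so $c(K)=c$; then the bound $C(K)\leq \lfloor c/2\rfloor$ is the Murakami--Yasuhara inequality \cite{upper} recalled in the introduction. This gives the full chain of inequalities.

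Finally, for sharpness, the plan is to exhibit two families of alternating knots: one realizing equality in the lower bound $1+\lceil t/3\rceil$ and one realizing equality in each of the upper bounds. Since the sharpness of the bounds in Theorem \ref{thm:cupjonesknots} is asserted there, the same examples work once one verifies that the relevant $T_K$ equals $t$ for a suitable reduced alternating diagram (which follows from the Dasbach--Lin identity again). For the $\lfloor c/2\rfloor$ upper bound, the standard Murakami--Yasuhara examples (e.g.\ certain alternating pretzel knots whose crosscap numbers are computed in \cite{pretzel}) suffice.

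The main technical step is the first one, namely verifying that the Dasbach--Lin checkerboard-graph interpretation of $\beta_K$ and $\beta_K'$ yields $T_K=t$ under the precise hypotheses ``prime, twist-reduced, not a $(2,p)$ torus diagram, $t\geq 2$''. The subtlety is that one must rule out degenerate twist regions and cancellations among contributions of the checkerboard graphs, which is precisely what primality and twist-reducedness ensure. Once that is secured, the theorem reduces cleanly to Theorem \ref{thm:cupjonesknots} plus the Murakami--Yasuhara bound, with sharpness inherited from those results.
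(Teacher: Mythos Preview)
Your proposal reverses the logical flow of the paper and is therefore circular. In this paper, Theorem~\ref{thm:cupjonesknots} is \emph{deduced from} Theorem~\ref{thm:chi-etimate2}: look at the proof of Theorem~\ref{thm:cupjonesknots} in Section~\ref{secthree}, where the authors write that ``the upper inequality follows at once from Theorem~\ref{thm:chi-etimate2}'' (together with Kauffman's $s_K=c$). So you cannot invoke Theorem~\ref{thm:cupjonesknots} to establish Theorem~\ref{thm:chi-etimate2} without assuming what you are trying to prove, at least for the $t+1$ half of the upper bound.

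The paper's actual argument goes the other direction and does not touch the Jones polynomial at all. Theorem~\ref{thm:chi-etimate2} is obtained as the $k=1$ specialization of Theorem~\ref{chi-etimate1} (the link version, proved via the augmented-link polyhedral decomposition, normal surfaces, and the combinatorial Gauss--Bonnet machinery of Section~\ref{sectwo}), which already gives $\lceil t/3\rceil+1\leq C(K)\leq t+1$; the $\lfloor c/2\rfloor$ term is then appended from Murakami--Yasuhara~\cite{upper}. Sharpness is exhibited by the explicit examples in the proof of Theorem~\ref{chi-etimate1} (the knots $10_3$ and $10_{123}$). Only \emph{afterwards} does the paper convert $t$ to $T_K$ via Dasbach--Lin to obtain Theorems~\ref{thm:cupjones} and~\ref{thm:cupjonesknots}.

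If you want to salvage your route, you would have to supply an independent proof of the upper bound $C(K)\leq T_K+1$ that does not pass through Theorem~\ref{thm:chi-etimate2}. That is possible---Lemma~\ref{uppert} plus the Dasbach--Lin identity $t=T_K$ does it---but then you are essentially reproducing the paper's argument with an unnecessary round-trip through Jones coefficients. The cleanest fix is simply to cite Theorem~\ref{chi-etimate1} with $k=1$ and add Murakami--Yasuhara, exactly as the paper does.
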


Having related $C(K)$ to twist numbers of alternating link projections, Theorems \ref{thm:cupjones} and \ref{thm:cupjonesknots} follow by  \cite{dasbach-lin:volumish}.

The overall technique used to prove Theorem \ref{thm:chi-etimate2} goes beyond the class of alternating links and allow us to generalize Theorem \ref{thm:cupjones}  for 
large classes of non-alternating links. In  Theorem \ref{thm:cupjonesadequate}  we provide two-sided linear bounds of $C(K)$ in terms of $T_K$ for \emph{adequate} links
that admit a link diagram with at least six crossings in each twist region (see Section \ref{secfive} for definitions and terminology).

We've made an effort to make the paper self-contained:
 In Section \ref{sectwo} we define augmented links and state definitions and results
from \cite{lack-surg, futer-purcell} that we need in this paper in the particular forms that we need them.
In Section \ref{secthree} first we define state surfaces and we recall the results of \cite{Adamsstate} that we use.
Then we prove  Theorems \ref{thm:cupjones} and \ref{thm:cupjonesknots}.
In Section \ref{calculations} we  calculate the crosscap numbers of infinite families of alternating knots for which the lower bound is sharp
as well as for several knots
up to 12 crossings. Finally, in Section \ref{secfive}  we discuss generalizations of our results outside the class of alternating links and we state some questions that
arise from this work.

\section{Augmented links and estimates with normal surfaces}\label{sectwo}
Consider  a link diagram $D(K)$ as a 4--valent graph
in the plane, with over--under crossing information associated to each
vertex.  A bigon region is a region of the graph bounded by only two
edges.  A \emph{twist region} of a diagram consists of maximal
collections of bigon regions arranged end to end. We will assume that the crossings in each twist region occur in an alternating fashion.
A single crossing
adjacent to no bigons is also a twist region. 

\begin{figure}[]
\includegraphics[scale=.5]{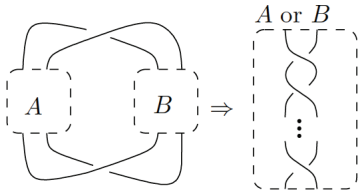}
\caption{Twist reduced: $A$ or  $B$ must be a string of bigons.}
\label{fig:treduced}
\end{figure} 

\begin{define} A link diagram $D(K) $ is \emph{prime} if any simple
closed curve which meets two edges of the diagram transversely
bounds a region of the diagram with no crossings. 

The diagram $D(K)$ is called \emph{twist reduced},
if any simple closed curve that meets the diagram transversely in four
edges, with two points of intersection adjacent to one crossing and
the other two adjacent to another crossing, bounds a (possibly empty) collection of bigons arranged end
to end between the crossings.  See Figure \ref{fig:treduced}, borrowed from \cite{fkp:filling}.
\end{define}

\subsection{An angled polyhedral decomposition.} For a link $K\subset S^3$, let $\eta(K)$ denote a regular neighborhood of $K$ and let
$E(K)$ denote the exterior of $K$; that is $E(K)=\overline{S^3\setminus \eta(K)}$.

Let $D(K)$ be a prime,
twist--reduced diagram of a link $K$. For every twist region of
$D(K)$, we add an extra link component, called a \emph{crossing
  circle}, that wraps around the two strands of the twist region.  A choice of crossing links for each twist region of $D(K)$ gives a new link $J$, called an \emph{augmented link}. 
The link $L$ obtained by removing all full twists from the twist regions of $J$ is called a \emph{fully augmented link}.
The exteriors $E(J)$, $E(L)$ are homeomorphic 3-manifolds and $E(K)$ can be
expressed as a Dehn filling of $E(L)\cong E(J)$. See
Figure \ref{fig:augment}, borrowed from \cite{fkp:filling}.

\begin{figure}[]
\begin{center}
\psfrag{K}{$K$}
\psfrag{J}{$J$}
\psfrag{L}{$L$}
%\psfrag{p}{$L'$}

\includegraphics[scale=0.30]{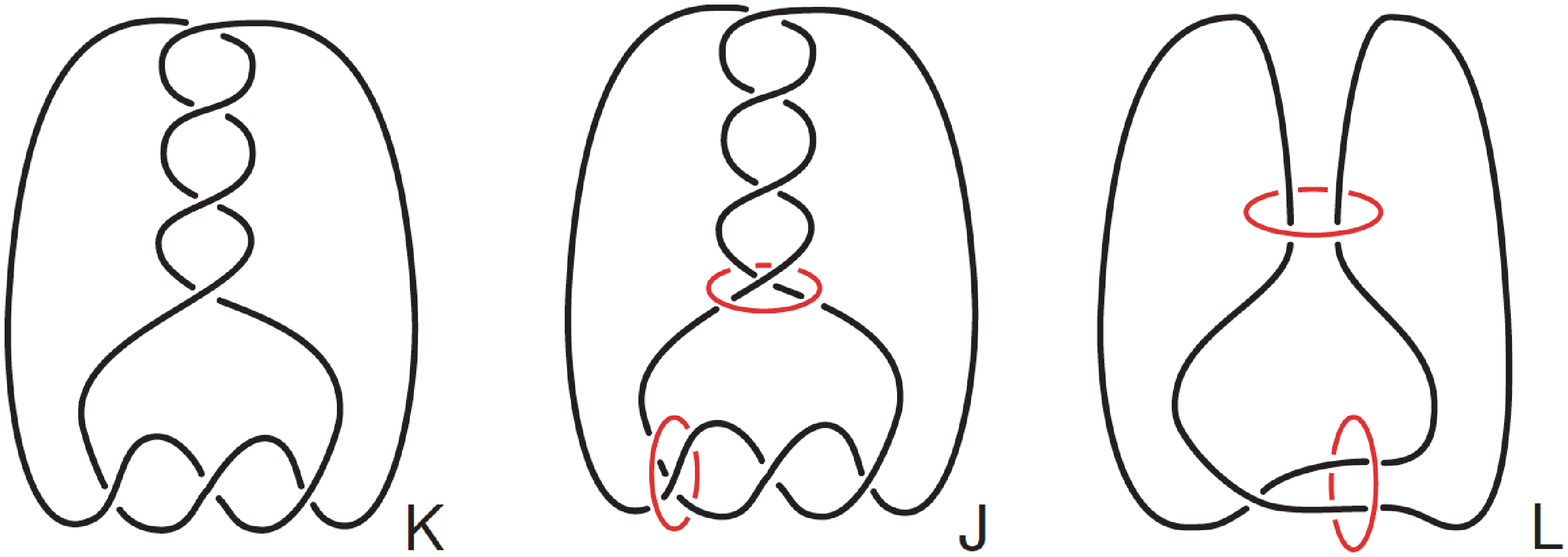}
\caption{A link  $K$, an augmented link $J$ and a fully augmented link  $L$.}
\label{fig:augment}
\end{center}
\end{figure}

For twist regions consisting
of a single crossing the addition of a crossing circle can be done in two ways. 
See Figure ~\ref{fig:twoways}. 
\begin{figure}[ht]
\includegraphics[scale=.16]{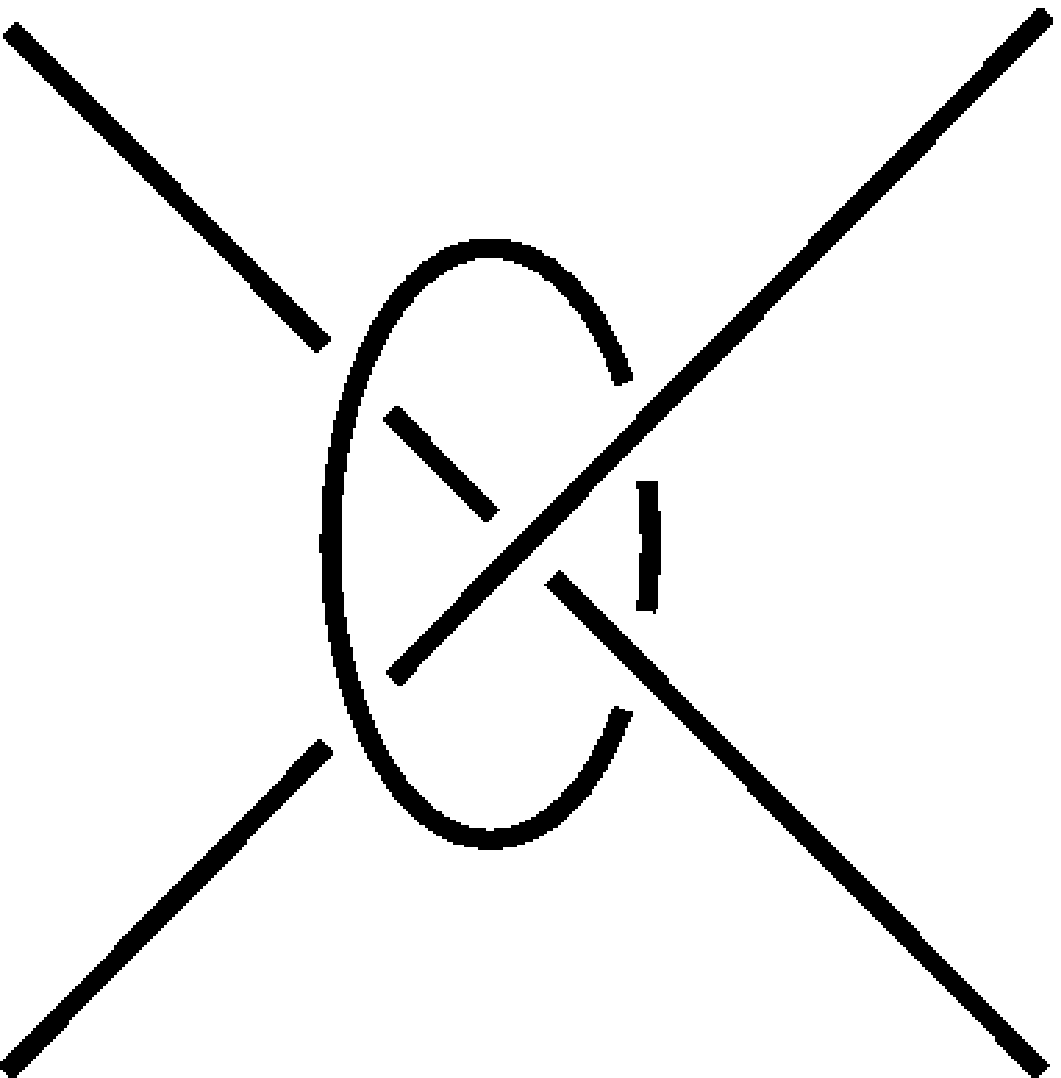}
\hspace{3cm} 
\includegraphics[scale=.16]{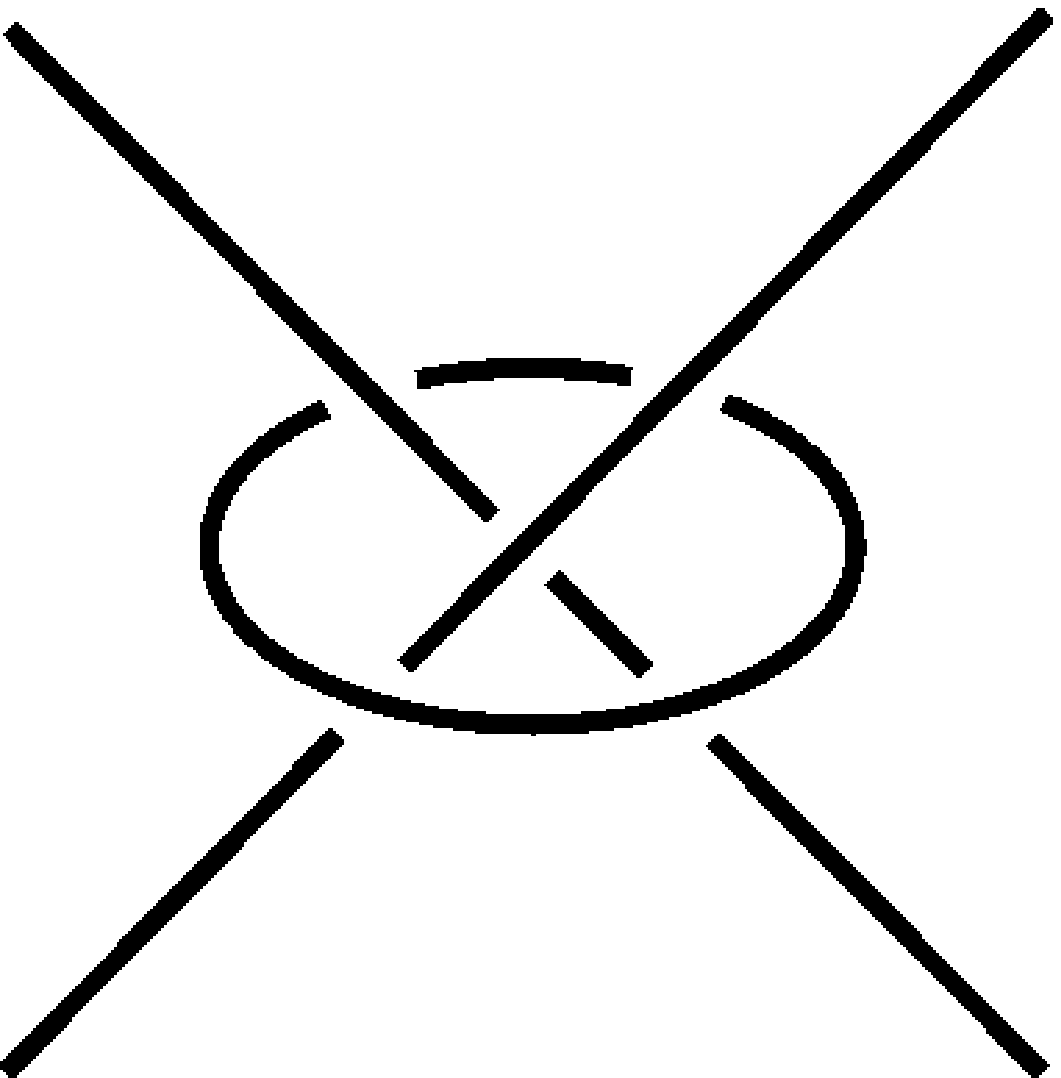}
\caption{ Two ways of augmenting a twist region with a single crossing.}
\label{fig:twoways}
\end{figure} 

The geometry of augmented links, first studied by Adams \cite{Adamsaug}, is well understood. Below, we  will summarize some results and properties we need; for more details see  Purcell's
 expository article  \cite{purcellsurvey} and references therein.
We will consider the  non-crossing circle components of $L$  flat on the projection plane and the crossing circles
bound crossing disks vertical to the projection plane. By work of Adams,
as explained by Agol and D. Thurston in the appendix of \cite{lackenby:volume-alt}, $E(L)$
 has  an \emph{angled polyhedral decomposition} with nice geometric and combinatorial properties. 
In this paper we are particularly interested in the combinatorial structure of this decomposition:
 one can define the \emph{combinatorial area} of surfaces in $E(L)$ that are in \emph{normal form}
with respect to the polyhedral decomposition and \emph{combinatorial length} of simple closed curves that lie  the boundary
$\partial{E(L)}$. The general setting was discussed by Lackenby in  \cite{lack-surg} building on ideas and constructions first introduced by Casson.
These ideas where applied in \cite{lack-surg} to prove the so called ``6-Theorem" in Dehn filling and were
 applied by
 Futer and Purcell   \cite{futer-purcell} to study the geometry of ``highly twisted" links and their Dehn fillings.

Start with a prime,  twist reduced diagram of a link $K$
and let $L$ be a fully augmented link obtained from it.
The manifold $E(L)$  
can be subdivided  into two identical,  convex
ideal polyhedra $P_1$ and $P_2$ in the hyperbolic 3-space. 
The ideal vertices of each polyhedron $P\in \{ P_1, P_2\}$
are 4-valent and they correspond to crossing circles or to arcs of $K\setminus {\mathcal D}$,
where $\mathcal D$ is the union of all the crossing disks.
After truncating all the  vertices of $P$ we have:
\begin{itemize}
\item Dihedral angles  at each edge of $P$ are $\pi/2$.

\item There are two types of edges  of $\partial P$: these  that are created by truncation called \emph{ boundary edges} (edges of $P\cap \partial E(L)$)
and the ones that
come from edges existing before truncation, called \emph {interior edges}. The interior edges come from intersections of the crossing discs with the projection plane.

\item There are two types of faces on $\partial P$: these  that are created by  truncation called \emph{ boundary faces} (faces of $P\cap \partial E(L)$),
and the ones that come from faces existing before truncation, called  \emph{interior faces}.
\item Each boundary face is a rectangle that meets four interior edges at the vertices of the rectangle.
The boundary faces of $P_1, P_2$ subdivide  $\partial E(J)$
into rectangles.

\item  The interior faces of $P$ can be colored with two colors (shaded and white) in a  checkerboard fashion so that at each rectangular boundary face of $P$
opposite side interior faces have the same color. The shaded faces correspond to crossing disks while the white faces correspond to regions of the projection
plane. See \cite[Figure 15]{lackenby:volume-alt} and Figure \ref{panel}.

\item  The properties  of the decomposition can in particular be used to prove  that $E(L)$ is hyperbolic.

\end{itemize}

The complement of all faces (interior and boundary)  on the projection plane is a graph $\Gamma\subset \partial P$ with vertices of valence at least three
and edges  the edges of $P$.

A polyhedral $P$ with the above properties  is called a \emph{rectangular-cusped} polyhedron.

\subsection{ Normal surfaces and combinatorial area.} In this paper we are interested in surfaces with boundary that are properly embedded in $E(L)$ and are in \emph{normal
form} with respect to the above polyhedral decomposition. We recall the following definition.

\begin{define} \label{normal} A properly embedded 
surface $(F, \partial F)\subset (E(L), \partial E(L))$
is said to be in \emph{normal form} with respect to the polyhedra decomposition,  if for any $P\in \{ P_1, P_2\}$
we have the following:
\begin{enumerate}
\item $F \cap P$ consists of properly embedded disks  $(D, \partial D)\subset (P, \partial P)$.
\item $F\cap \partial P$ is a collection  of simple closed curves  none of which lies entirely in a single face of $P$.

\item $F$ intersects faces of $P$ in a collection of properly embedded arcs none of which
passes through vertices of $\Gamma$. Furthermore, none of these arcs runs from an edge of $F$ to itself or from an interior edge
to an adjacent boundary edge.

\item A component  of  $F \cap P$ can intersect each boundary face in at most one arc.

\end{enumerate}
 \vskip 0.05in

\noindent The components of  $F \cap P$ are called \emph{normal disks}. 
An example of  three normal disks in a
truncated polyhedron is shown in Figure \ref{disks}, borrowed from \cite{futer-purcell}. Note that $P$ shown there is not a rectangular-cusped polyhedron as 
not all the boundary faces are rectangles.
\end{define}

\begin{figure}[ht]
\begin{center}

\includegraphics[scale=0.28]{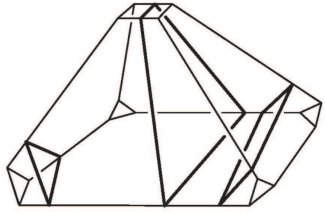}
\caption{Normal disks in a truncated polyhedron.}
\label{disks}
\end{center}

\end{figure}

We recall the definition of the  \emph{combinatorial} area of a surface in normal form.

\begin{define} \label{area} Let  $(D, \partial D)\subset (P, \partial P)$ be a normal disk in
a polyhedron  $P\in \{ P_1, P_2\}$. Suppose that  $D$ crosses  $m$   interior edges of $P$. 
The combinatorial area of $D$, denoted by  $a(D)$, is defined by
$$a(D)= \frac{m\pi}{2}+ \pi |D\cap \partial E(L)|- 2\pi,$$
\noindent where $|D\cap \partial E(L)|$ denotes the number of arcs of $\partial D$ running on boundary faces of $P$.
For an embedded 
surface $(F, \partial F)\subset (E(L), \partial E(L))$, in normal form, the combinatorial area $a(F)$ is defined by summing over all the normal disks of $F$ in the polyhedra $P_1, P_2$.
\end{define}

Next we recall the following form the  combinatorial version of the Gauss-Bonnet; it is special case of   \cite[Proposition 4.3]{lack-surg}.

\begin{prop} \label{gaussBonnet} Let $(F, \partial F)\subset (E(L), \partial E(L)))$ be a surface in normal form of Euler characteristic
$\chi(F)$. We have
$$a(F) = -2\pi \chi(F).$$

\end{prop}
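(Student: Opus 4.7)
The plan is to run a disk-by-disk combinatorial Gauss--Bonnet argument in the style of Lackenby \cite{lack-surg}. The first step is to rewrite the combinatorial area formula of Definition~\ref{area} in the equivalent form
$$a(D) \;=\; \sum_{c \in \mathrm{corners}(D)} \bigl(\pi - \theta_c\bigr) \;-\; 2\pi,$$
valid for any normal disk $D$, where each corner $c$ is a point of $\partial D$ on an edge of the rectangular-cusped polyhedron containing $D$ and every dihedral angle $\theta_c$ equals $\pi/2$. This matches Definition~\ref{area} because each arc of $\partial D$ that lies on a boundary face contributes exactly two corners (its two endpoints, both on boundary edges), so the number of boundary-edge corners is $2|D \cap \partial E(L)|$; combined with the $m$ interior-edge corners this recovers the stated formula.

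Next I would equip $F$ with the induced cell structure in which the $2$-cells are the normal disks, the $1$-cells are the arcs of $F$ in faces of $P_1, P_2$ (split into $E_{\mathrm{int}}$ arcs in interior faces and $E_{\mathrm{bdy}}$ arcs in boundary faces), and the $0$-cells are the points of $F$ on edges of the polyhedral decomposition (split as $V_{\mathrm{int}}$ and $V_{\mathrm{bdy}}$ according to whether the edge is interior or boundary). Writing $a(F) = \sum_D a(D)$ and regrouping corner contributions by the $0$-cell at which they sit, the manifold angle-sum conditions now apply: around each interior $0$-cell the dihedral angles of the polyhedra total $2\pi$, while around each boundary $0$-cell (where $\partial F$ crosses a boundary edge of $\partial E(L)$ transversely) they total $\pi$. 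Since every $\theta_c = \pi/2$, this forces exactly four normal disks at each interior $0$-cell and two at each boundary $0$-cell, and produces
$$a(F) \;=\; 2\pi\, V_{\mathrm{int}} + \pi\, V_{\mathrm{bdy}} - 2\pi\, N,$$
where $N$ is the number of normal disks.

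Finally I would compare this expression with $-2\pi\chi(F) = -2\pi(V_{\mathrm{int}} + V_{\mathrm{bdy}} - E_{\mathrm{int}} - E_{\mathrm{bdy}} + N)$, which reduces the claim to the identity $2V_{\mathrm{int}} + \tfrac{3}{2}V_{\mathrm{bdy}} = E_{\mathrm{int}} + E_{\mathrm{bdy}}$. This follows from two elementary bookkeeping facts: $\partial F$ is a closed $1$-manifold, so $V_{\mathrm{bdy}} = E_{\mathrm{bdy}}$, and counting corner--disk incidences in two ways yields $2E_{\mathrm{int}} + E_{\mathrm{bdy}} = 4V_{\mathrm{int}} + 2V_{\mathrm{bdy}}$ (each interior $1$-cell is shared by two normal disks and each boundary $1$-cell by one). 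The main delicate step is the interior angle-sum condition, which rests on the specific fact from the Adams/Agol--D.~Thurston construction that each interior edge of $E(L)$ is surrounded by four polyhedral wedges at dihedral angle $\pi/2$; once this combinatorial feature of the decomposition is in hand, the rest of the argument is routine arithmetic.
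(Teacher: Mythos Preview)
Your argument is correct and is essentially the standard combinatorial Gauss--Bonnet computation that underlies \cite[Proposition~4.3]{lack-surg}. The paper itself does not give a proof of this proposition at all: it simply records the statement as a special case of Lackenby's result and moves on. So you have supplied an honest proof where the paper only supplies a citation.

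One small remark on presentation: your argument is tailored to the particular fact that all dihedral angles equal $\pi/2$ (hence exactly four wedges around each interior edge and two around each boundary edge). Lackenby's original formulation is slightly more general---it only needs the dihedral angles around each interior edge to sum to $2\pi$ and around each boundary edge to sum to $\pi$, which is the definition of an angled polyhedral decomposition. Your proof goes through verbatim in that generality if you replace the corner count ``$4V_{\mathrm{int}}+2V_{\mathrm{bdy}}$'' by the angle-sum argument $\sum_{c\ni v}(\pi-\theta_c) = k_v\pi - 2\pi$ (interior) or $k_v\pi - \pi$ (boundary), where $k_v$ is the number of disks at $v$; the bookkeeping identity then becomes $\sum_D(\text{corners of }D) = 2E_{\mathrm{int}}+E_{\mathrm{bdy}}$, which you already have. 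For the paper's purposes the $\pi/2$ special case is all that is needed, so your version is perfectly adequate here.
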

\vskip 0.03in

To continue, with $P$ as above,  consider a normal disk    $(D, \partial D)\subset (P, \partial P)$ such that  $\partial D$  intersects at least one boundary face of $P$.
Given an arc of  $\gamma \subset \partial D$ on a boundary face of $P$, define the \emph{combinatorial length}
of $\gamma$ with respect to $D$ by
$$l(\gamma, D)=\frac{a(D)}{|D\cap \partial E(L)|}.$$

For a simple closed curve  $\gamma \subset \partial (E(L)$, that is a boundary component of a surface
$(F, \partial F)\subset (E(L), \partial E(L))$, let $H\subset F$ be the union of normal disks in $F$
whose intersections with the boundary faces of $P_1, P_2$ give $\gamma$. Thus $\gamma$ is the union of arcs each properly embedded in a boundary face of the polyhedra.
Now define the combinatorial length

$$l_c(\gamma):= l(\gamma, H)\, =\, \sum_{i}^{} l(\gamma_i, D),$$ 

\noindent where the sum is taken over all normal disks in $H$ and all normal arcs on boundary faces.

The  quantity $l_c(\gamma)$, defined above,  depends on $F$. To obtain a well defined notion of \emph{combinatorial length} 
one needs to consider the {\it {infimum}} over all normal surfaces $F$ and collections $H$ with $\partial F=\gamma$.
In fact, one can define the combinatorial length of \emph{any } simple closed curve on $\partial E(L)$  \cite{lack-surg, futer-purcell}.
This is the definition of $l_c$ used by the authors in \cite{futer-purcell}.
We will not repeat these definitions here as we don't need them.   However, the  combinatorial length estimates  of simple closed curves 
obtained in \cite{futer-purcell} also hold for $l(\gamma, H)$.
We need the following lemma that follows immediately from the above definitions.

\begin{lemma}{\rm \cite[Lemme 4.13]{futer-purcell}}\label{length-area}
Let $(F, \partial F)\subset (E(L), \partial E(L)))$ be an embedded surface in normal form with respect to the polyhedral
decomposition and let $\gamma_1, \ldots, \gamma_k$ denote the
components of $\bdy F$.
Then, 
$$a(F) \geq \sum_{j=1}^{k} \lcomb(\gamma_j) \, .$$
\end{lemma}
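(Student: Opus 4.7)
The plan is a direct bookkeeping argument from the definitions of combinatorial area and combinatorial length. The key observation is that, for any normal disk $D$ of $F$ whose boundary meets $\partial E(L)$, summing the quantities $l(\gamma, D)$ over the arcs $\gamma \subset \partial D$ lying on boundary faces recovers exactly $a(D)$, because $l(\gamma, D) = a(D)/|D \cap \partial E(L)|$ while there are precisely $|D \cap \partial E(L)|$ such arcs.

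Concretely, I would first write $a(F) = \sum_D a(D)$ summed over all normal disks of $F$ in $P_1 \cup P_2$, and split these into \emph{boundary disks} (those with $|D \cap \partial E(L)| \geq 1$) and \emph{interior disks} (those disjoint from $\partial E(L)$). By the observation above, for each boundary disk $D$ the sum of $l(\gamma, D)$ over arcs of $\partial D$ on boundary faces collapses to $a(D)$. Next, each arc of $\partial D \cap \partial E(L)$ lies on a unique boundary component $\gamma_j$ of $\partial F$, and the collection $H_j$ of boundary disks contributing to $\gamma_j$ is precisely the one appearing in the definition of $\lcomb(\gamma_j)$. Regrouping the single sum by $j$ therefore yields
$$\sum_{j=1}^{k}\lcomb(\gamma_j) \;=\; \sum_{D \text{ boundary disk of } F} a(D).$$

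To finish, I would verify that interior disks contribute non-negatively to $a(F)$. For such a disk Definition~\ref{area} gives $a(D) = m\pi/2 - 2\pi$, where $m$ is the number of interior edges crossed by $\partial D$; non-negativity is the inequality $m \geq 4$. This is a standard property of the angled rectangular-cusped polyhedra $P_1, P_2$ and follows from the normality conditions of Definition~\ref{normal} together with the fact that all dihedral angles of $P_i$ equal $\pi/2$; in the framework of \cite{lack-surg} the polyhedral decomposition is precisely designed so that every normal disk has non-negative combinatorial area, so this can be cited rather than reproven. Adding the non-negative interior contributions to the boundary-disk sum produces $a(F) \geq \sum_j \lcomb(\gamma_j)$, as required. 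The only place where real care is needed is the non-negativity of $a(D)$ for interior disks, and even this step is standard in the angled polyhedral setting.
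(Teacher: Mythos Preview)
Your argument is correct and is exactly the bookkeeping the paper has in mind: the paper does not give a proof but simply says the lemma ``follows immediately from the above definitions,'' and your regrouping of $\sum_j \lcomb(\gamma_j)$ as $\sum_{D\text{ boundary disk}} a(D)$ together with non-negativity of $a(D)$ for interior disks is precisely that. The only point worth flagging is that the non-negativity of $a(D)$ for interior normal disks (equivalently $m\ge 4$) is indeed part of the general setup of angled polyhedral decompositions in \cite{lack-surg}, so citing it as you do is appropriate rather than a gap.
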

 
Finally we need the following.

\begin{lemma} \label{gen-estimate} Suppose that $E(L)$
is an augmented link obtained from a prime, twist-reduced link diagram $D(K)$. Let  $(F, \partial F)\subset (E(L), \partial E(L)))$ 
be an embedded  normal surface and let $\gamma$ be a component of $\partial F$ that is homologically non-trivial
on a component 
$T\subset \bdy E(L)$.
 If $T$ comes from a crossing circle  of $L$, let $m$
denote the number of crossings in the corresponding twist region of $D(K)$.
If $T$ comes from a
component $K_j$ of $K$, let $m$ be the number of twist regions visited
by $K_j$, counted with multiplicity.  We have
$$\lcomb(\gamma) \geq \frac{m\pi }{3}.$$
\end{lemma}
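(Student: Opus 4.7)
The plan is to follow the Futer--Purcell strategy for bounding combinatorial lengths on cusps of augmented link complements, combining the area formula of Definition~\ref{area} with an analysis of how normal disks tile the cusp torus. First I would unpack $\lcomb(\gamma)$. Writing $k(D) = |D \cap \bdy E(L)|$ for a normal disk $D\subset H$, and using $a(D) = n_D\pi/2 + k(D)\pi - 2\pi$ with $l(\gamma_i, D) = a(D)/k(D)$ for each arc $\gamma_i \subset \bdy D$ on a boundary face, summing over disks in $H$ gives
$$\lcomb(\gamma) \;=\; \sum_{D \in H} \frac{k_T(D)}{k(D)}\, a(D),$$
where $k_T(D)$ denotes the number of arcs of $\bdy D$ lying on boundary faces of $T$.

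Next I would analyze how $T$ is tiled by boundary faces of $P_1 \cup P_2$, so as to count arcs. Each ideal vertex on $T$ contributes one rectangular boundary face per adjacent polyhedron. If $T$ comes from a component $K_j$, then the ideal vertices on $T$ correspond precisely to intersections of $K_j$ with crossing disks, and the twist-reduced hypothesis together with the counting "with multiplicity" gives exactly $m$ such intersections, producing $2m$ rectangles tiled into a $2\times m$ pattern on $T$. If $T$ comes from a crossing circle, a similar analysis of how the projection plane and the associated crossing disk meet the crossing circle in the augmented link yields a tiling whose combinatorial complexity grows with $m$. In either case, I would show that a homologically non-trivial simple closed curve $\gamma$ on $T$ must cross at least $m$ boundary edges of the tiling, so that
$$\sum_{D \in H} k_T(D) \;\geq\; m.$$

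Finally, I would establish the per-arc bound $l(\gamma_i, D) \geq \pi/3$. Rewriting $a(D)/k(D) = n_D\pi/(2k(D)) + \pi - 2\pi/k(D)$, the bound $\geq \pi/3$ reduces to a simple inequality in $(n_D, k(D))$ that I would verify case by case for the possible types of normal disks in the rectangular-cusped polyhedra, using the $4$-valence of ideal vertices and the normality constraints (ii)--(iv) of Definition~\ref{normal}; any configuration that violates the bound can be ruled out using primality and twist-reducedness of $D(K)$, exactly as in Futer--Purcell. Combining with the arc count gives
$$\lcomb(\gamma) \;\geq\; \frac{\pi}{3}\sum_{D \in H} k_T(D) \;\geq\; \frac{m\pi}{3},$$
as desired. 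The main obstacle is the per-arc estimate: the function $a(D)/k(D)$ dips below $\pi/3$ for certain small values of $(n_D, k(D))$, and ruling these out uniformly requires careful use of normality together with the specific combinatorial structure of the decomposition of $E(L)$ coming from a prime, twist-reduced diagram.
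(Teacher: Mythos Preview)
Your overall strategy---reduce to a per-arc length estimate combined with a count of how many boundary faces $\gamma$ must cross---is exactly the Futer--Purcell approach that the paper cites, so in spirit you are doing what the paper does. There is, however, a concrete error in your crossing-circle analysis. When $T$ comes from a crossing circle, the cusp tiling of $T$ in $E(L)$ does \emph{not} grow with $m$: recall that $L$ is the \emph{fully} augmented link, so every twist region has been reduced to $0$ or $1$ crossings, and the crossing-circle cusp is tiled by a fixed small number of rectangles independent of how many crossings the original region of $D(K)$ had. What depends on $m$ is the \emph{slope} of $\gamma$. The surface $F$ comes from a surface in $E(K)$ via the identification $E(J)\cong E(L)$, and under this homeomorphism the meridian of the crossing circle in $E(J)$ is carried to a curve on $T$ that winds on the order of $m$ times around one generator of the cusp. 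It is this winding---not the number of tiles---that forces $\gamma$ to cross on the order of $m$ boundary faces and hence to have combinatorial length at least $m\pi/3$.

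The same issue infects your knot-strand case: your $2\times m$ description of the tiling is correct, but the assertion that \emph{any} homologically nontrivial curve must cross at least $m$ boundary edges is false (a meridian of $K_j$ crosses only two rectangles). The bound holds because in the intended application $\gamma$ is a longitude, so it runs once around the length-$m$ direction; the slope, not mere nontriviality, is doing the work. Finally, your caution about the per-arc bound is well placed: configurations such as boundary bigons ($n_D=0$, $k(D)=2$) give $a(D)/k(D)=0$, and Futer--Purcell deal with these by a finer argument exploiting the rectangular-cusped structure rather than by excluding them one at a time.
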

\begin{proof} It is proven in the proof of
\cite[Corollary 5.12]{futer-purcell} using \cite[Proposition 5.3]{futer-purcell}.
\end{proof}

\subsection{ Genus estimates for spanning  surfaces}  
 Let $(S, \partial S)\subset (E(K), \partial E(K))$  be a spanning surface of $K$.
That is a surface where the components of $\partial S$ are the components of $K$ and it contains no closed components. In fact, often, we will use
(some times implicitly) the following convenient  characterization of spanning surfaces.

\begin{lemma}\label{spanning} A properly embedded surface $(S, \partial S)\subset ( E(K), \partial E(K))$, without closed components,
is a spanning surface of $K$ iff for each component of $\partial E(K)$, the total geometric intersection number of the  boundary curves $\partial S$ with
the corresponding meridian is $1$.
\end{lemma}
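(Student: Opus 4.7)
The plan is to reduce the question to a torus-by-torus analysis of $\partial S$ and to exploit the fact that the geometric intersection number of a simple closed curve $\gamma$ on a torus $T_j \subset \partial E(K)$ with the meridian $\mu_j$ equals $|q|$, where $p/q$ is the slope of $\gamma$ in the basis $\{\mu_j, \lambda_j\}$ of $H_1(T_j)$. In particular, geometric intersection with $\mu_j$ is additive over disjoint simple closed curves, so the ``total geometric intersection number of the boundary curves $\partial S$ with $\mu_j$'' is the sum $\sum_i |q_i|$ taken over the components of $\partial S \cap T_j$.

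For the forward direction, I would argue directly from the definition. If $S$ is a spanning surface, then the components of $\partial S$ are the components of $K$, so on each torus $T_j$ the intersection $\partial S \cap T_j$ consists of a single simple closed curve which, after filling in the solid torus $\eta(K_j)$, is isotopic to the core $K_j$. Such a curve is a longitude of $K_j$ (with some framing), i.e.\ a $(p,1)$-curve, and hence intersects $\mu_j$ in exactly one point. Summing over the single component of $\partial S \cap T_j$ gives total geometric intersection equal to $1$.

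For the reverse direction, I would start from the observation that $\sum_i |q_i| = 1$ forces exactly one component $\ell_j$ of $\partial S \cap T_j$ to satisfy $|q|=1$, with every other component having $q=0$. The $|q|=1$ component $\ell_j$ is a framed longitude of $K_j$, which becomes $K_j$ itself after the solid torus $\eta(K_j)$ is glued back into $E(K)$. I would then show that the remaining components (which are either inessential curves on $T_j$ or meridional $(1,0)$-curves) cannot exist under the standing hypothesis that $S$ has no closed components: an inessential boundary curve on $T_j$ could be pushed off and would bound a disk component of $S$, while a meridional component would either cobound an annulus with $\ell_j$ or another meridian in $S$ (contradicting the value of the intersection number once we reconcile orientations) or would cap off against a meridian disk of $\eta(K_j)$ to produce a closed component of $S$ in $S^3$. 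Thus $\partial S \cap T_j = \ell_j$, and reinserting $\eta(K)$ yields a surface in $S^3$ whose boundary is $K$.

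The main obstacle is the bookkeeping in the reverse direction needed to exclude spurious meridional and inessential boundary components, since these contribute zero to the intersection count with $\mu_j$ and therefore are not directly detected by the hypothesis. The ``no closed components'' assumption is the key ingredient that prevents them: it rules out capping off any such meridional curves by disks in $\eta(K_j)$ without creating a forbidden closed surface, and it similarly obstructs trivial boundary components once one examines the component of $S$ they lie on.
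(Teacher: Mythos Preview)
Your forward direction is fine and matches the paper's one-line observation.

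In the reverse direction your reduction to ``one longitude $\ell_j$ on each $T_j$, plus possibly some $q=0$ curves'' is exactly the content of the paper's proof, which phrases it as: several curves on $T_j$ would have to be parallel and hence contribute more than one meridional intersection.  Where your write-up diverges is in the attempted elimination of the $q=0$ curves, and both branches of that argument have problems.

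First, the meridional case never actually arises.  A meridian and the longitude $\ell_j$ on the same torus have geometric intersection number~$1$, so they cannot be disjoint; since the components of $\partial S\cap T_j$ are pairwise disjoint, the presence of $\ell_j$ already excludes any meridional component.  Your proposed mechanism (cobounding an annulus in $S$, or capping with a meridian disk) is not needed and is not correct as stated---there is no reason two boundary curves of $S$ should cobound an annulus \emph{in $S$}.

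Second, and more seriously, the argument for excluding \emph{inessential} boundary curves does not go through, and the hypothesis ``no closed components'' is not the right lever.  If $S_0$ is any genuine spanning surface and $D'\subset E(K)$ is a small properly embedded disk whose boundary $\gamma\subset T_j$ is null-homotopic and disjoint from $\partial S_0$, then $S=S_0\sqcup D'$ is properly embedded, has no closed components, and has total meridional intersection~$1$ on every $T_j$; yet $\partial S$ has an extra component $\gamma$, so $S$ is not a spanning surface in the strict sense.  Your claim that such a $\gamma$ ``would bound a disk component of $S$'' is exactly what fails: $\gamma$ bounds a disk in $T_j$, but nothing forces the component of $S$ carrying $\gamma$ to be a disk.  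The paper's proof does not address this point either---it tacitly assumes the boundary curves are essential---and indeed the lemma is used later only up to discarding such ``redundant'' components (see the sentence invoking it in the proof of Lemma~\ref{normal}).  So the honest version of the converse is: $\partial S$ contains exactly one longitude on each $T_j$, together with possibly some null-homotopic curves that one throws away.
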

\begin{proof} If $S$ is a spanning surface of $K$, then clearly
the desired conclusion holds. Conversely, suppose that  each component of  $\partial S$ has geometric  intersection number  $1$ with the meridian on the component of $\partial E(L)$ it lies.
Then $\partial S$ must have exactly one component on the corresponding component of $ \partial E(K)$.
For, if we had  more that one curves on some component of $ \partial E(K)$  then these curves will be parallel creating more intersections of
$\partial S$ with the corresponding meridian that would contribute to the geometric intersection number. Thus each component of 
$\partial S$ is a longitude of $\partial E(L)$.
\end{proof}

Let $J$ be an augmented link obtained from a diagram of a $K$ and let $L$ be the corresponding fully augmented
link. A spanning surface $(S, \partial S)\subset (E(K), \partial E(K))$  gives  a
 punctured surface $(F, \partial F)\subset ( E(J), \partial E(J))$.
Now $F$ gives a properly embedded surface   $(F', \partial F')\subset ( E(L), \partial E(L))$.
Since we are only interested in the Euler characteristic of the surface we will often choose to work with $F'$
instead of $F$. In fact, abusing the setting, we will identify $F$ with $F'$ and say we can view $F$ as 
a surface 
in the exterior of the fully augmented link $E(L)\cong E(J)$.
For the next theorem we will also assume
that  the surface $F$ can be isotopied into normal form with respect to the polyhedral decomposition of $E(L)$.

\begin{theorem} \label{chi-etimate}
  Let $K \subset S^3$ be a link of $k$ components with a prime,
  twist-reduced diagram  $D(K)$. 
Suppose that $D(K)$ has $t \geq 2$ twist
  regions and  let $\tau$ denote the smallest number of crossings
corresponding to a twist region of $D(K)$. Let $S$ be a spanning surface
of $K$ and
let $L$ be an augmented link, obtained from $K$, such that $S$ intersects
$n$ crossing circles of $L$.
Suppose  that the corresponding punctured surface $F\subset E(L)$ can be isotopied to be  normal with respect to the polyhedral decomposition $P_1, P_2$.
Then we have 

  $$-\chi(S)\geq \left\lceil  \frac{t}{3} \, +\,  \frac{n \tau}{6}\, -\, n
  \right\rceil.$$
 \end{theorem}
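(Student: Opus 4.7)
My plan is to apply the combinatorial Gauss-Bonnet formula to the normal surface $F \subset E(L)$, bound its combinatorial area below via the combinatorial lengths of the boundary curves, and then translate back to $-\chi(S)$.

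First, by Proposition \ref{gaussBonnet}, $a(F) = -2\pi \chi(F)$, and by Lemma \ref{length-area},
$$a(F) \;\geq\; \sum_{j} \lcomb(\gamma_j),$$
where $\{\gamma_j\}$ is the collection of boundary components of $F$. I partition the $\gamma_j$'s into two groups: those lying on $\partial \eta(K)$ and those lying on the crossing-circle tori of $\partial E(L)$. By Lemma \ref{spanning}, the curves of the first type are longitudes, so if $K_{\ell}$ is a component of $K$ visiting $m_{\ell}$ twist regions (counted with multiplicity), Lemma \ref{gen-estimate} yields $\lcomb(\gamma_{\ell}) \geq m_{\ell}\pi/3$. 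Since each of the $t$ twist regions contains exactly two strands, summing over the $k$ components gives $\sum_{\ell} m_{\ell} = 2t$, hence the total contribution of the $K$-longitudes is at least $2t\pi/3$.

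For the boundary curves on crossing-circle tori, any such curve must be homologically non-trivial on its torus (otherwise $F$ could be simplified by an innermost disk argument, contradicting normal form). By Lemma \ref{gen-estimate}, such a curve on $\partial \eta(C_i)$ has $\lcomb \geq m_i\pi/3 \geq \tau \pi/3$, where $m_i$ is the number of crossings in the twist region associated to $C_i$. Taking one curve per intersected crossing circle contributes at least $n\tau\pi/3$. Combining the two groups,
$$-2\pi\chi(F) \;=\; a(F) \;\geq\; \frac{2t\pi}{3} + \frac{n\tau\pi}{3}, \qquad \text{so} \qquad -\chi(F) \;\geq\; \frac{t}{3} + \frac{n\tau}{6}.$$

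Finally I relate $\chi(F)$ to $\chi(S)$. The surface $F$ is obtained from $S$ by removing an open tubular neighborhood of each intersected crossing circle; arranging $S$ (via the normal form) so that it meets each of the $n$ crossing circles in a single meridional disk yields $\chi(F) = \chi(S) - n$. Hence
$$-\chi(S) \;=\; -\chi(F) - n \;\geq\; \frac{t}{3} + \frac{n\tau}{6} - n,$$
and taking ceilings (since $-\chi(S) \in \ZZ$) gives the claim. The step I expect to be most delicate is verifying the non-triviality and correct count of the boundary components on the crossing-circle tori: one must use the prime, twist-reduced hypothesis together with the normal-form conditions of Definition \ref{normal} to rule out trivial curves and to ensure each intersected $C_i$ contributes exactly one meridional puncture, matching the $n$ in the statement.
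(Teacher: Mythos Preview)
Your proposal is correct and follows essentially the same route as the paper: apply Proposition~\ref{gaussBonnet} and Lemma~\ref{length-area} to $F$, estimate the boundary contributions via Lemma~\ref{gen-estimate} (getting $2t\pi/3$ from the $K$-components and $n\tau\pi/3$ from the crossing-circle components), use $\chi(F)=\chi(S)-n$, and round up by integrality. The only difference is cosmetic: the paper simply asserts the relation $\chi(S)=\chi(F)+n$ and the existence of the $n$ crossing-circle boundary curves without your extra justification, since in this setup the boundary components of $F$ on the crossing-circle tori are meridians by construction (they arise from puncturing $S$ along the crossing circles), so their homological non-triviality is immediate rather than requiring an innermost-disk argument.
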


\begin{proof}
  The boundary $\partial F$ consists of curves $\gamma_1,
  \ldots, \gamma_k$ one for each torus component of $\partial E(L)$ that comes from a component of $K$ and curves $\gamma_{k+1}, \ldots, \gamma_{k+n}$ on the components coming from crossing circles.
By assumption $F$  can be isotopied into  normal form in the polyhedra $P_1$ and $P_2$; so we can compute
  its combinatorial area. By applying Proposition \ref{gaussBonnet}  and Lemma \ref{length-area} we have

\begin{eqnarray*}
-2\pi \cdot \chi(S)
&=& -2\pi \cdot \chi(F)-2\pi n =a(F)-2\pi n \\
&\geq& \sum_{i=1}^{k} \ell(\gamma_i) \, + \, \sum_{i=1}^{n} \ell(\gamma_{k+i}) -2\pi n . \\
\end{eqnarray*}

By Lemma
  \ref{gen-estimate}, the total length of the curves  $\gamma_1, \ldots, \gamma_k$
  is at least $2t\pi/3$, because $K$ passes through each twist region
  twice.  By the same lemma the total length of the curves $\gamma_{k+1}, \ldots, \gamma_{k+n}$
is at least $n\tau\pi/3$. Thus from the last equation we obtain

\begin{eqnarray*}
-2\pi \cdot \chi(S)
&\geq&  \frac{2t\pi}{3}  \, + \, \frac{n\tau\pi}{3}-2\pi n\\
&=& 2\pi ( \frac{t}{3} \, +\, \frac{n\tau }{6}-n). \\
\end{eqnarray*}
Since the Euler characteristic is an integer, the conclusion follows.
\end{proof} 

Recall that for a non-orientable surface $S$, with  $k$ boundary components,
 the crosscap number is  defined to be $C(S)=2-\chi(S)-k$.
We have the following result that should
be compared with \cite[Theorem 1.5]{futer-purcell}.
\begin{corollary}  \label{twisted} Let the notation and setting be as in Theorem \ref{chi-etimate}. Suppose moreover that
$D(K)$ has at least six crossings in each twist region and that $S$ is non-orientable. Then we have
$$C(S)\geq \left\lceil  \frac{t}{3}\right\rceil \, +\,  2\, -\, k .$$
\end{corollary}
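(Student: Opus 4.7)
The plan is to simply substitute $\tau \geq 6$ into the estimate of Theorem \ref{chi-etimate} and then unwind the definition of crosscap number. This is almost immediate; the only thing to check is that the ceiling manipulation works cleanly and that the boundary count matches.

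First I would observe that since by hypothesis every twist region of $D(K)$ has at least six crossings, the quantity $\tau$ appearing in Theorem \ref{chi-etimate} satisfies $\tau \geq 6$. Consequently, for the number $n$ of crossing circles of $L$ that $S$ meets, one has
\begin{equation*}
\frac{n\tau}{6} - n \;\geq\; n - n \;=\; 0,
\end{equation*}
so the estimate of Theorem \ref{chi-etimate} specializes to
\begin{equation*}
-\chi(S) \;\geq\; \left\lceil \frac{t}{3} + \frac{n\tau}{6} - n \right\rceil \;\geq\; \left\lceil \frac{t}{3} \right\rceil.
\end{equation*}

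Next, since $S$ is a spanning surface for $K$, its boundary $\partial S$ is identified with $K$; in particular, by Lemma \ref{spanning}, $S$ has exactly one boundary component on each component of $\partial E(K)$, so $S$ has precisely $k$ boundary components. As $S$ is non-orientable, the crosscap number is defined by $C(S) = 2 - \chi(S) - k$. Combining with the inequality above gives
\begin{equation*}
C(S) \;=\; 2 - k - \chi(S) \;\geq\; 2 - k + \left\lceil \frac{t}{3} \right\rceil,
\end{equation*}
which is the claimed bound.

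There is really no obstacle here: the content is entirely in Theorem \ref{chi-etimate}, and the role of the six-crossings-per-twist-region hypothesis is precisely to kill the negative $-n$ term contributed by the crossing-circle boundary components of the punctured surface $F$. One very small bookkeeping point to be careful about is that the ceiling of a sum dominates the ceiling of either summand when the other summand is nonnegative, which is what legitimizes replacing $\lceil t/3 + n\tau/6 - n\rceil$ by $\lceil t/3 \rceil$ in the displayed inequality above.
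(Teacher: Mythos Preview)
Your proof is correct and follows essentially the same route as the paper: use $\tau \geq 6$ to make the $n\tau/6 - n$ term nonnegative, then apply Theorem \ref{chi-etimate} and the definition of $C(S)$. The paper's version is slightly terser (it does not explicitly invoke Lemma \ref{spanning} for the boundary count), but the argument is the same.
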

\begin{proof} Let $\tau$ denote the smallest number of crossings
corresponding to a twist region of $D(K)$.  By assumption, $\tau\geq 6$.
Thus Theorem \ref{chi-etimate} gives 
\begin{eqnarray*}
C(S)
&=&
-\chi(S)\, +2\, -\, k
\geq 
\left\lceil  \frac{t}{3} \, +\,  \frac{n \tau}{6}\, -\, n \right\rceil \, +2\, -\, k \\
&\geq &
\left\lceil  \frac{t}{3} \, +\,  n \, -\, n \right\rceil \, +2\, -\, k = 
\left\lceil  \frac{t}{3}  \right\rceil \, +2\, -\, k.\\
\end{eqnarray*}

\end{proof}

%\vskip 0.06in

\section{Alternating links}\label{secthree}
Adams and Kindred   \cite{Adamsstate}  gave an  algorithm, starting with an alternating link projection, to
construct  spanning surfaces of maximal Euler characteristic
among all the spanning surfaces of the  link. 
Our goal in this section is to show that we can take such a spanning surface to lie in
the complement of an appropriate augmented link obtained from the link projection. In the next section we will use the techniques of
Section \ref{sectwo} to estimate the crosscap number of a prime alternating link  in terms of  the twist number and the crossing number of any prime, twist-reduced  alternating 
diagram of the link.

\subsection{State surfaces and a minimum overall genus algorithm} Given a crossing on a link diagram $D(K)$ there are two ways to resolve
it. 
A Kauffman state $\sigma$ on $D(K)$ is a choice of one of these two resolutions at each crossing of $D(K)$.  
Given a state $\sigma$ of $D(K)$ we obtain a  spanning surface $S_{\sigma}$ of $K$, as follows:
The result of applying $\sigma$
to $D(K)$
is a collection $v_{\sigma}(D)$ of non-intersecting
circles in the plane,  called \emph{state circles}. We record the crossing resolutions along $\sigma$ by embedded segments connecting the state circles.
 Each circle of $v_{\sigma}(D)$ bounds a disk in $S^3$. These disks may be nested on the projection plane but can be made disjointly embedded
in $S^3$ by pushing  their interiors at different heights below the projection plane.
For each arc recording the resolution of a crossing of $D(K)$ in $\sigma$, we connect the pair of neighboring disks by a half-twisted band. The result is a surface $S_{\sigma} \subset S^3$ whose boundary is $K$.  See Figure \ref{resolve}. 
\begin{figure}[ht]
%\begin{center}
\includegraphics[scale=.6]{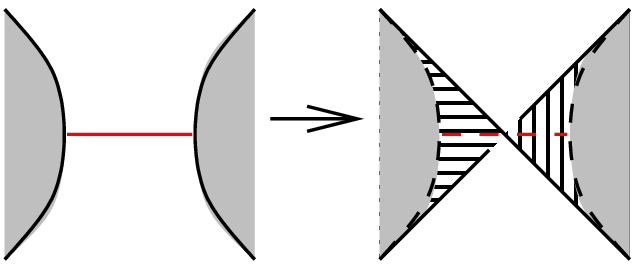}
\hspace{2cm}
\includegraphics[scale=.6]{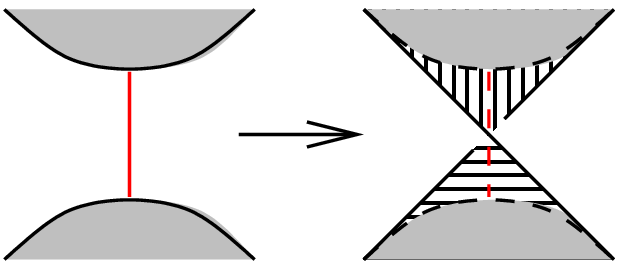}

\caption{The two resolutions of a crossing, the arcs recording them  and their contribution to state surfaces.} 
\label{resolve}
%\end{center}
\end{figure}

Note that there might be several ways to make the disks bounded by the circles $v_{\sigma}(D)$ disjoint and the resulting surfaces may not necessarily be isotopic
in $S^3$. Nevertheless, this point is not important for the results of this paper, or for those of \cite{Adamsstate}
as the resulting surfaces will have the same topology (i.e. Euler characteristic and orientability). The geometry of the state surfaces, $S_{\sigma}$ with the particular construction described above, was studied by Futer, Kalfagianni  and Purcell  \cite{fkp, fkp:qsf}. 

In \cite{Adamsstate} the authors show that state surfaces of alternating diagrams can be used to determine the crosscap numbers of alternating links. In particular, starting with an alternating diagram $D(K)$,  they gave an algorithm to obtain a state surface with maximal Euler characteristic among all the state surfaces of $D(K)$. Then they showed that this Euler characteristic is the maximum over all spanning surfaces of $K$, state or non-state. 
To review this algorithm,  recall that a   diagram  $D(K)$ may be viewed as  a 4-valent graph on   $S^2$ with over and under information at each vertex. A complementary region of this graph on $S^2$ is an $m$-gon if its boundary consists of $m$ vertices or edges. We will refer to a $2$-gon as a bigon and a $3$-gon as a triangle. We need the following elementary lemma. 

\begin{lemma} \label{triangle} Suppose that an alternating diagram $D(K)$ contains no 1-gon or bigon.  Then it must contain at least one triangle.
\end{lemma}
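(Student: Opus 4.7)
The plan is a standard Euler-characteristic / face-counting argument applied to the $4$-valent graph underlying $D(K)$. The alternating hypothesis is actually not needed; what matters is only that $D(K)$ is a connected $4$-valent graph on $S^{2}$.

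First I would set up the count. Write $V$, $E$, $F$ for the numbers of vertices, edges, and complementary regions of $D(K)$ in $S^2$, and let $f_m$ denote the number of $m$-gonal regions, so that $F = \sum_m f_m$. Since $D(K)$ is $4$-valent, the handshake identity gives $2E = 4V$, hence $E = 2V$. Euler's formula $V - E + F = 2$ then becomes
\[
F \;=\; \tfrac{E}{2} + 2.
\]
Next, since each edge bounds exactly two face-sides, we also have
\[
\sum_{m \geq 1} m f_m \;=\; 2E.
\]
Combining the last two displays,
\[
\sum_{m \geq 1} (4-m)\, f_m \;=\; 4F - 2E \;=\; 8.
\]

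Now the hypothesis $f_1 = f_2 = 0$ plugs into this identity and yields
\[
f_3 \;=\; 8 \;+\; \sum_{m \geq 5} (m-4)\, f_m \;\geq\; 8,
\]
so in fact $D(K)$ contains at least eight triangular regions, which is far more than the single triangle demanded by the lemma. The only thing to be careful about is that the argument tacitly needs $D(K)$ to be realized as a connected $4$-valent graph on $S^{2}$ (otherwise the Euler characteristic picks up the number of connected components), but this is part of what is meant by a link diagram in the setting of the paper, and there is no real obstacle here. No step in the argument will be difficult; the whole content of the lemma is squeezed out of the identity $\sum_m(4-m)f_m = 8$.
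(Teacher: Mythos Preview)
Your proof is correct and follows essentially the same Euler-characteristic argument as the paper: both use $E=2V$ (from $4$-valence) together with $V-E+F=2$ on $S^2$. The paper phrases the conclusion as a contradiction (if every face had at least four sides then $F\leq V$, against $F=V+2$), whereas your identity $\sum_m (4-m)f_m = 8$ extracts the sharper bound $f_3\geq 8$.
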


\begin{proof}
Consider the 4-valent graph on $S^2$ defined by $D(K)$ and let $V$, $E$, $F$ 
denote the number of its vertices, edges and complimentary regions, respectively.
 Then,
$V - E + F = 2$.
 We have $E = 4V / 2 = 2V$. Hence $V - 2V + F = 2$, which implies that $F > V$. Let $m$ be the smallest positive integer for which the 4-valent graph contains an $m$-gon. If it contains no 
 1-gon, bigon or triangle, then $m > 3$. This implies that $F < 4V / 4 = V$ since each face must have at least four  distinct vertices in its boundary and each vertex can only be on the boundary of at most 4 distinct faces. This is a contradiction. Therefore, $m \not> 3$ and if $m > 2$, then $m =3$.
\end{proof}

Observe that the Euler characteristic of a state surface,
corresponding to a state $\sigma$ that results to $v_{\sigma}$ state circles, is   $\chi(S_{\sigma})=v_{\sigma}-c$, where $c$ is the number of crossings on $D(K)$.
Thus to maximize $\chi(S_{\sigma})$ we must maximize the number of state circles $v_{\sigma}$.
Now we review  the algorithm of  \cite{Adamsstate}:  

\vskip 0.03in

\begin{algorithm} \label{AKalgor} {\rm Let $D(K)$ be a connected, alternating diagram.

\begin{enumerate}

\item Find the smallest $m$ for which the projection $D(K)$ contains an $m$-gon. 
\item \begin{enumerate}
\item If $m=1$, then we resolve the corresponding crossing so that the $1$-gon becomes a state circle.

Suppose that $m=2$. Then  $D(K)$ contains twist regions with more than one crossings.
Pick $R$ to be such a twist region with $c_R>1$ crossings and $c_R-1$ bigons. Resolve all the crossings of $R$
in such a way so that all these bigons become state circles.
Create one branch of the algorithm for each bigon on $D(K)$. \label{step2a}

\vskip 0.02in

\item Suppose $m>2$. Then  by Lemma  \ref{triangle}, we have $m = 3$. Pick a triangle region on $D(K)$.
Now the process has two branches: For one branch 
we resolve each crossing on this triangle's boundary so that the triangle becomes a state  circle. For the other branch, we resolve each of the crossings the opposite way.  See Figure \ref{3-gon}.
\label{step2b}

\begin{figure}[ht]
\centering
\includegraphics[scale=.15]{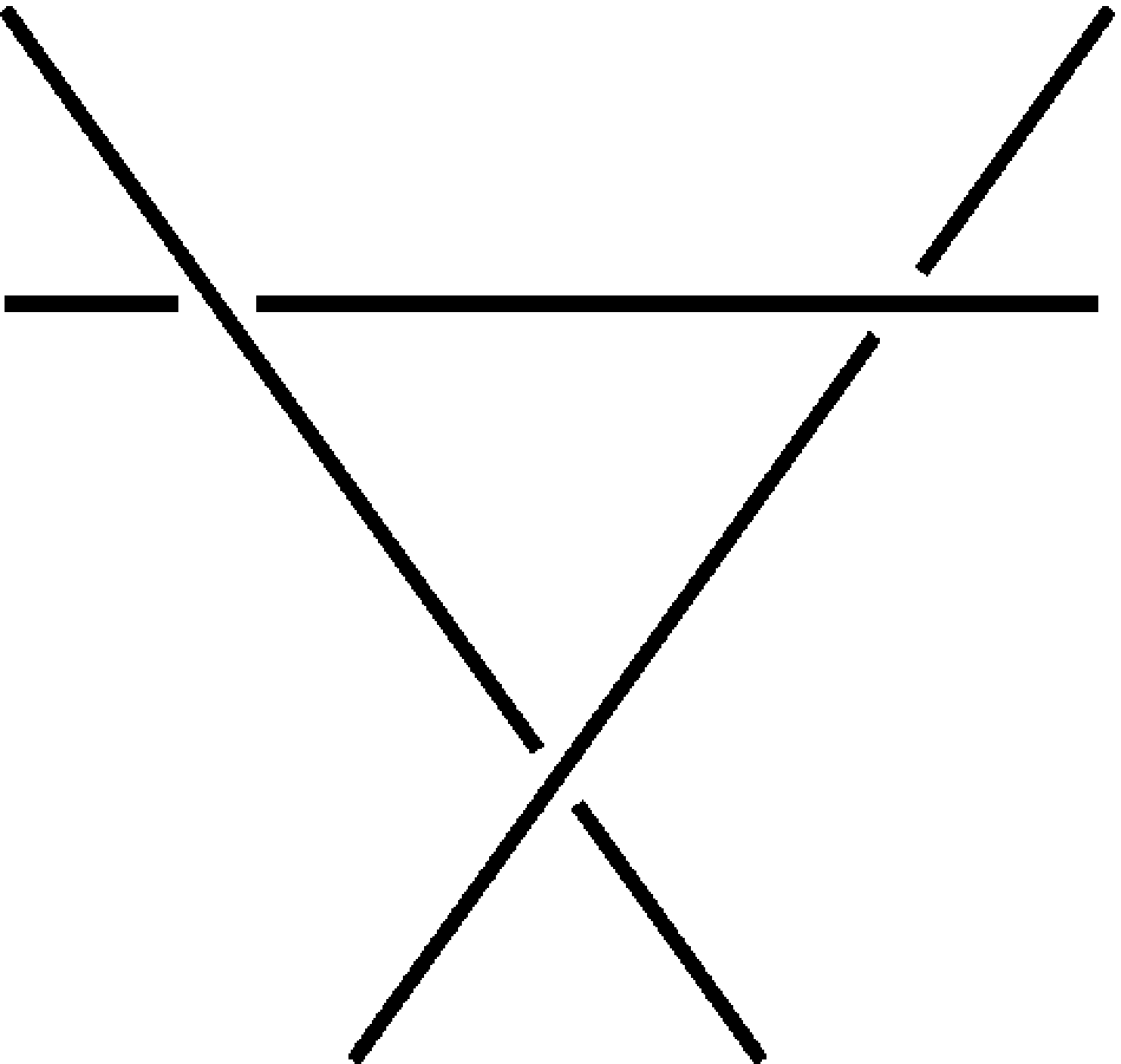}
\hspace{1cm}
\includegraphics[scale=.15]{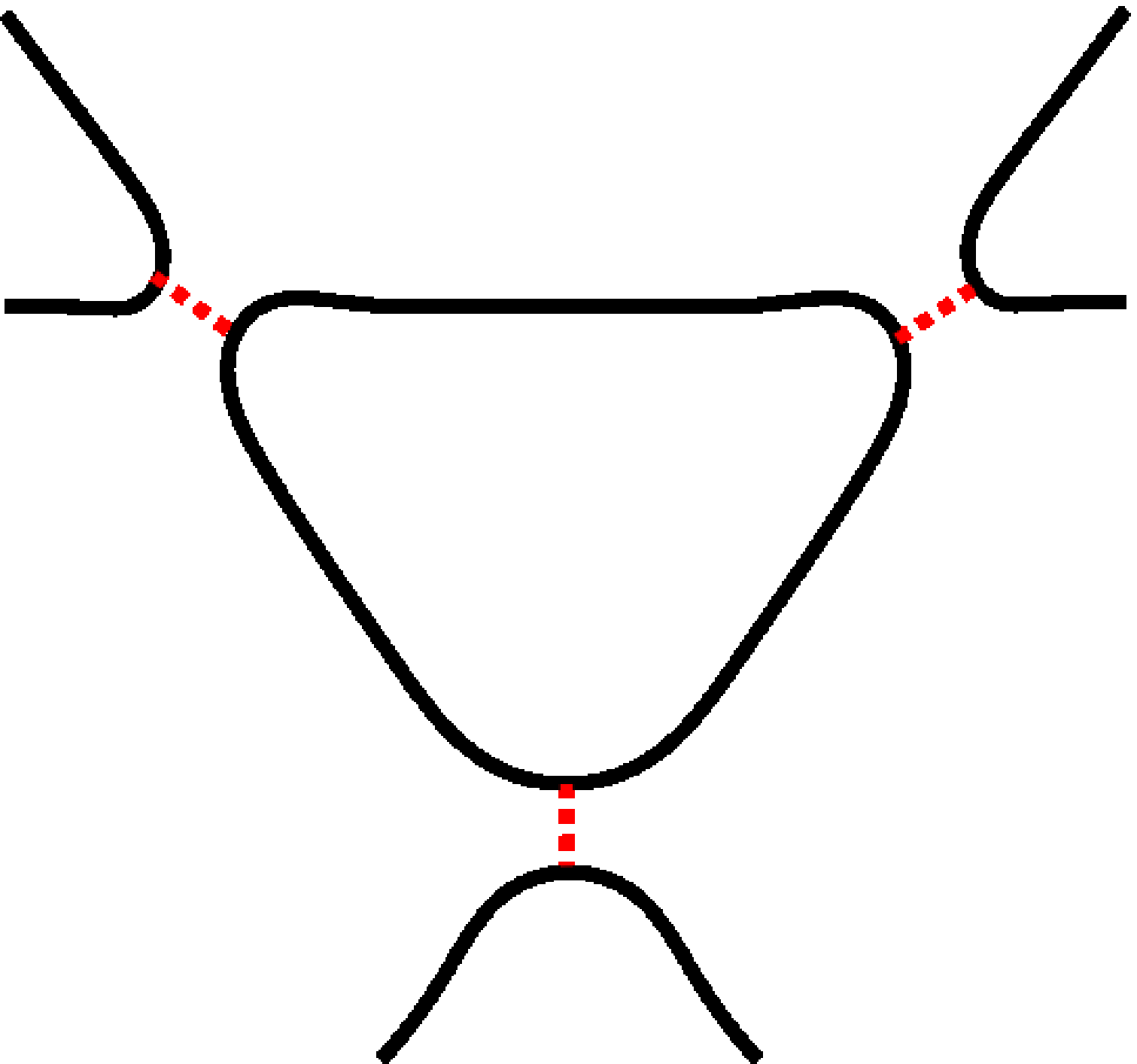}
\hspace{1cm}
\includegraphics[scale=.15]{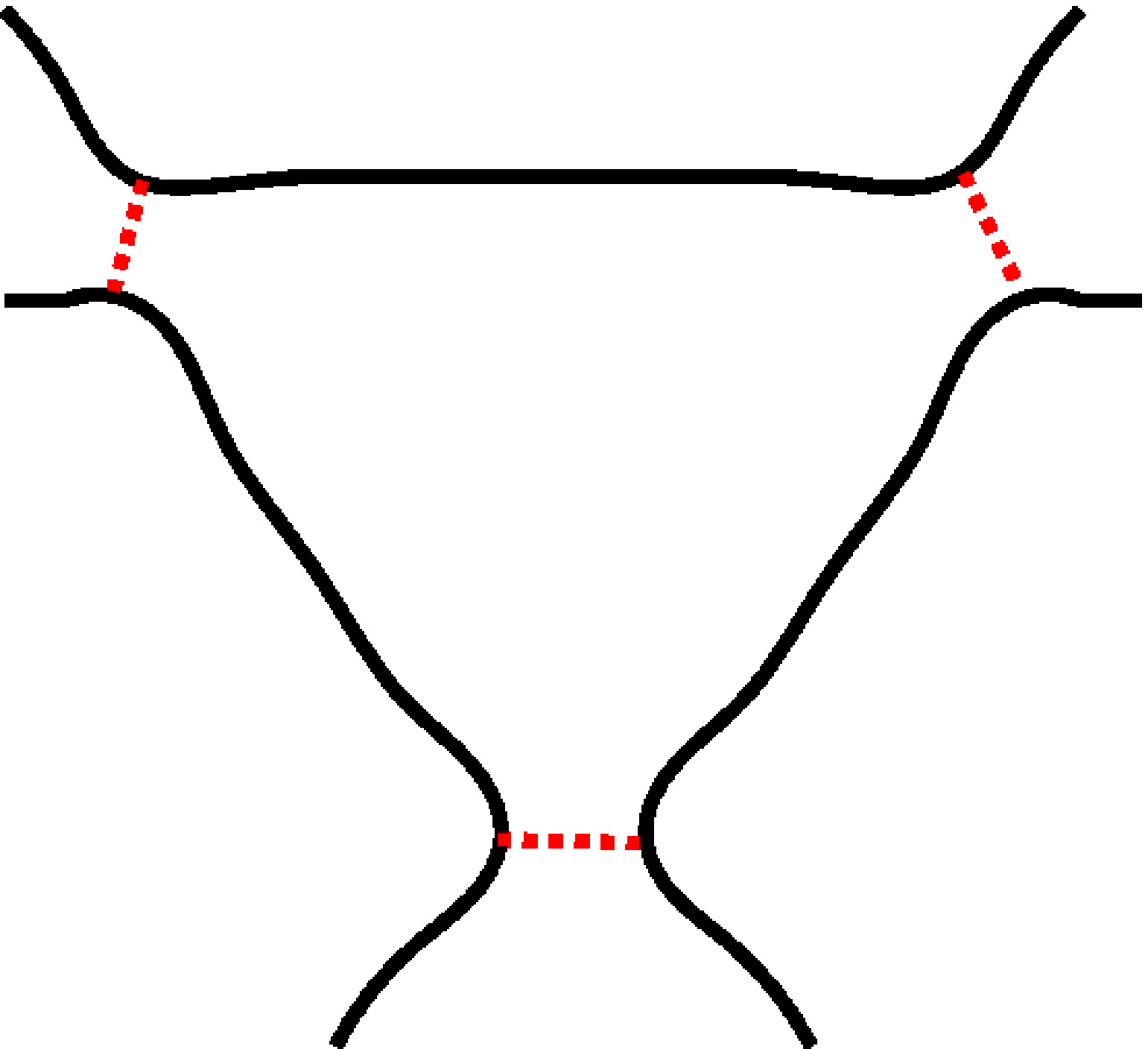}
\caption{One branch of the algorithm resolves the crossings so that the  triangle becomes a state circle. The other resolves them the opposite way.}
\label{3-gon}
\end{figure} 

\end{enumerate} 
\item Repeat  Steps 1  and 2 until each branch reaches a projection without crossings. Each branch corresponds 
to a Kauffman state of $D(K)$  for which there is a corresponding state surface.
Of all the branches 
involved in the process choose  one that has the largest number of state circles. The surface corresponding to this 
state has maximal Euler characteristic over all the states corresponding to $D(K)$.
Note that, {\emph{a priori}}, more than one branches of the algorithm may lead to surfaces of maximal Euler characteristic. 
That is, there might be several state surfaces that have the same (maximal) Euler characteristic.
\end{enumerate} }
\end{algorithm}

The following result allows to calculate  the crosscap number of an alternating link from the surface obtained by
applying
Algorithm \ref{AKalgor}
to any  alternating projection of the link.

\begin{theorem}{\rm{ \cite[Corollary 6.1]{Adamsstate}} }\label{AK}  
Let $S$ be any maximal Euler characteristic  surface obtained via Algorithm \ref{AKalgor}   from an alternating diagram of $k$-component link $K$.
Let  $C(K)$  denote the crosscap number of $K$ and let $g(K)$ denote the (orientable) genus of $K$.
Then, 
\begin{enumerate}
\item If there is a surface $S$ as above that is non-orientable then $C(K)=2-\chi(S)-k$.
\vspace{0.1in}
\item  If all the surfaces $S$ as above are orientable, we have $C(K)=3-\chi(S)-k$.
Furthermore $S$ is a minimal genus Seifert surface of $K$ and  $C(K)=2g(K)+1$.
\end{enumerate}
\end{theorem}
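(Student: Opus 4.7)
The plan is to establish two claims, from which both items follow. \textbf{Claim (A):} Algorithm \ref{AKalgor} produces a state surface $S$ of maximal Euler characteristic among all state surfaces of $D(K)$. \textbf{Claim (B):} For alternating diagrams this maximum coincides with the maximum of $\chi$ over \emph{all} spanning surfaces of $K$; moreover, if some state surface of maximal Euler characteristic is non-orientable then the algorithm finds one.

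For Claim (A) I would induct on the crossing number, following the branches of the algorithm. In the $1$-gon step the resolution is forced. In the bigon step, resolving an entire twist region in one direction turns every bigon of the region into a state circle, whereas any competing sequence of resolutions sacrifices circles with no compensating gain. In the triangle step, keeping both branches is essential because either local choice can be globally optimal; a short case analysis using the alternating condition at the three crossings shows that no state outside of these two branches yields more circles than the better of the two. Iterating completes the induction.

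Claim (B) is where the alternating hypothesis really enters and is the principal obstacle. The strategy is to take any spanning surface $S'$ and, via boundary-compressions and innermost-disk isotopies, push it into a position where near every crossing of $D(K)$ it agrees with one of the two local Kauffman half-twisted bands. Primality of $D(K)$ together with the alternating condition, via Menasco's analysis of essential surfaces in alternating complements, control the isotopy and prevent $\chi$ from decreasing. The result is a state surface $\tilde S'$ with $\chi(\tilde S')\ge \chi(S')$; by Claim (A), $\chi(\tilde S')\le \chi(S)$. The normalization can be arranged to preserve enough structure that if $S'$ is non-orientable and $\chi(S')=\chi(S)$, then the corresponding $\tilde S'$ is a non-orientable maximal state surface, which is captured by the two-branch exploration in the triangle step.

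Given the claims, item (1) is immediate: any non-orientable algorithm-produced $S$ satisfies $C(K)\le 2-\chi(S)-k$, and (B) forces equality. For item (2), if every algorithm-produced $S$ is orientable then $S$ is a Seifert surface and Claim (B) gives $\chi(S)=2-2g(K)-k$, so $S$ is of minimum genus. By the final sentence of Claim (B), no non-orientable spanning surface attains $\chi(S)$, so every non-orientable spanning surface has $\chi\le \chi(S)-1$. Attaching a crosscap in the interior of $S$ realizes this bound, yielding $C(K)=2-(\chi(S)-1)-k=3-\chi(S)-k=2g(K)+1$.
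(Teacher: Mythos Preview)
The paper does not give its own proof of this theorem: it is quoted verbatim as \cite[Corollary 6.1]{Adamsstate} and used as a black box. So there is nothing in the paper to compare your argument against; you are in effect trying to reconstruct the Adams--Kindred proof.

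Your overall architecture---first show the algorithm maximizes $\chi$ among state surfaces, then show state surfaces already realize the maximum over all spanning surfaces---is exactly the structure of the Adams--Kindred argument. But several of your steps are placeholders rather than arguments. In Claim~(A), the triangle step is the nontrivial one: you assert that ``a short case analysis using the alternating condition at the three crossings shows that no state outside of these two branches yields more circles than the better of the two,'' but this is precisely the lemma that needs proof, and it is not short---one has to track how the three local resolutions interact with the connectivity of the rest of the state. In Claim~(B) you invoke Menasco-style normal position and boundary-compressions to turn an arbitrary spanning surface into a state surface without dropping $\chi$; this is the right idea, but the actual mechanism in \cite{Adamsstate} goes through their notion of \emph{layered} surfaces relative to the alternating projection, and your sketch does not supply any of that machinery. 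Finally, the sentence ``the normalization can be arranged to preserve enough structure that if $S'$ is non-orientable and $\chi(S')=\chi(S)$, then the corresponding $\tilde S'$ is a non-orientable maximal state surface'' is exactly what is needed for item~(2), but you give no reason why the isotopies and compressions cannot destroy non-orientability; without this, the deduction $C(K)=2g(K)+1$ does not go through.

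In short: the strategy is the correct one (and is the one in \cite{Adamsstate}), but the proposal as written is an outline with the substantive lemmas left as assertions.
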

\smallskip

\begin{example}\label{Fig8}We should  clarify that different choices of branches as well as the order in resolving bigon regions following Algorithm \ref{AKalgor} may result in different state surfaces. In particular at the end of the algorithm  we may
have  both orientable and non-orientable surfaces that share the same Euler characteristic.
We illustrate the subtlety  in the algorithm by applying it to the knot $4_1$.
\begin{figure}[ht]
\centering
\includegraphics[scale=.4]{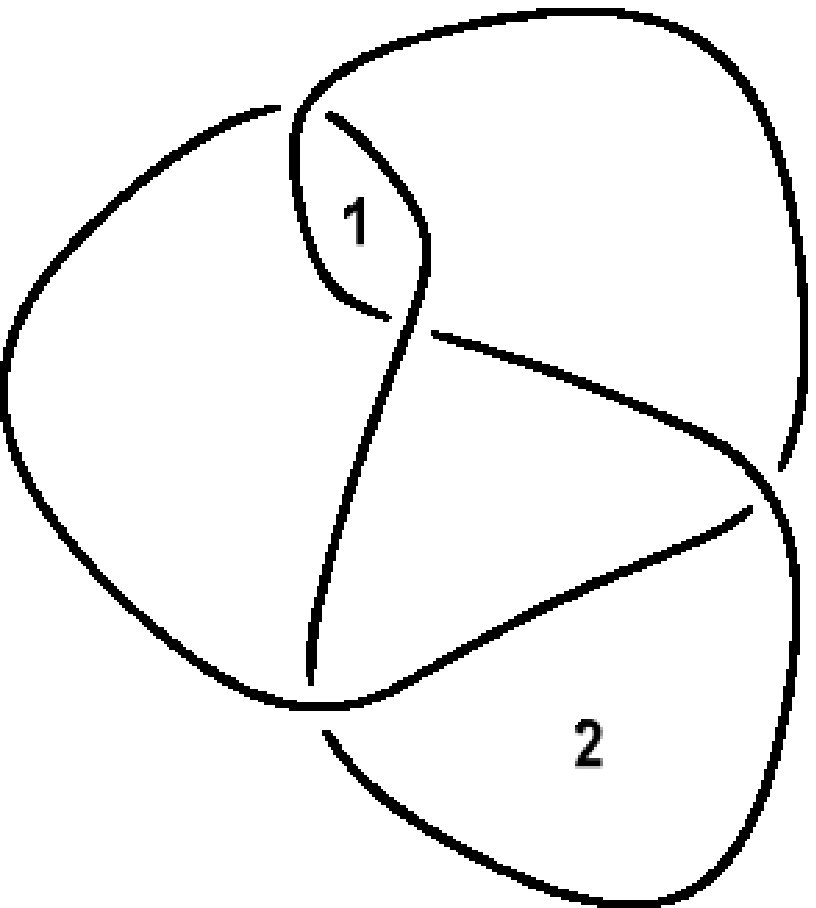} 
\hspace{2cm}
\includegraphics[scale=.4]{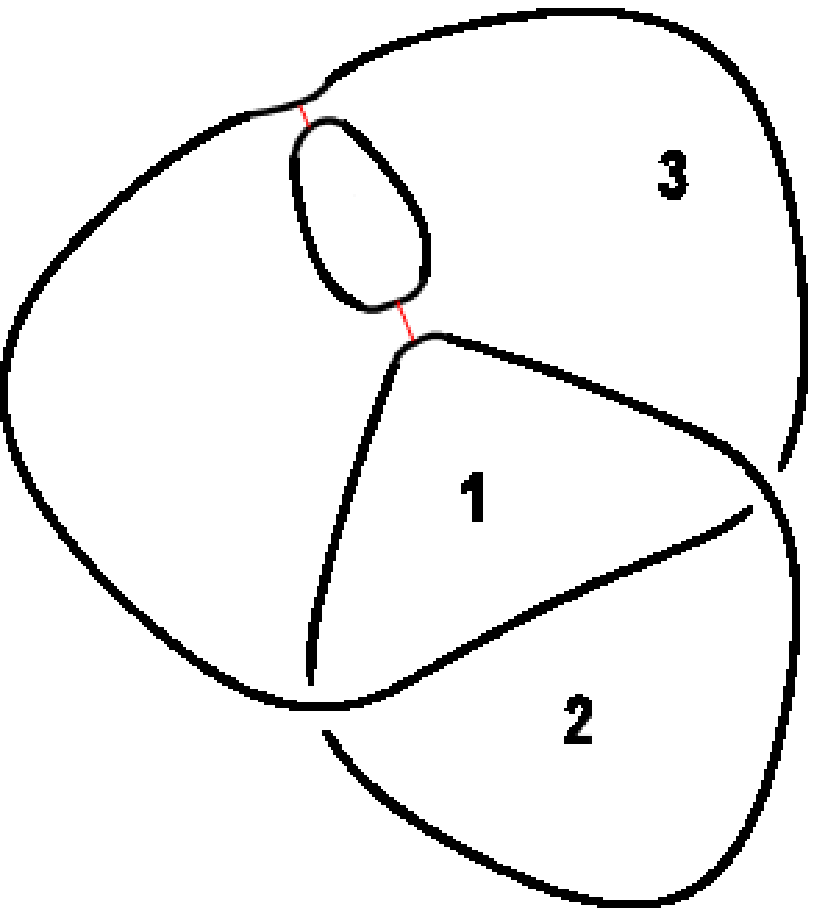}
\label{fstep1}
\caption{A diagram of the knot  $4_1$ with bigon regions labeled 1 and 2 and the diagram resulting from applying the first step of Algorithm \ref{AKalgor}.}
\end{figure}

 Suppose that we choose the bigon labeled by  1 in the left hand side picture of  Figure 7.
Then, for the next step of the algorithm, we have three choices of bigon regions to resolve, labeled by 1 and 2 and 3 in the right hand side picture of the figure.

\begin{figure}[ht]
\includegraphics[scale=.4]{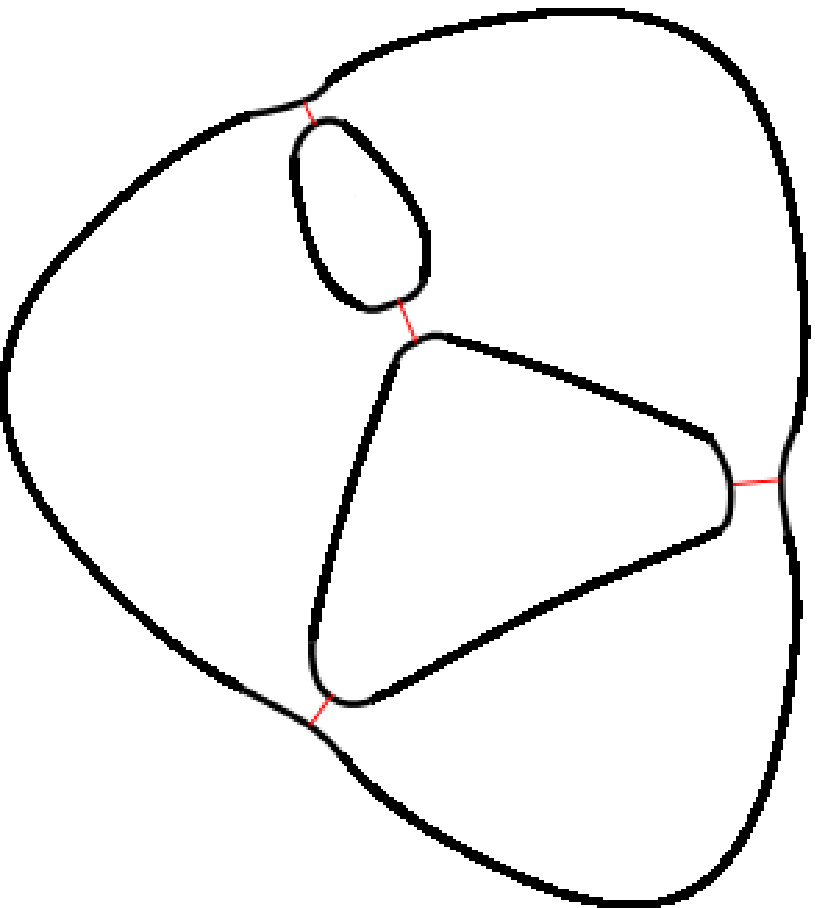} 
\hspace{2cm} 
\includegraphics[scale=.4]{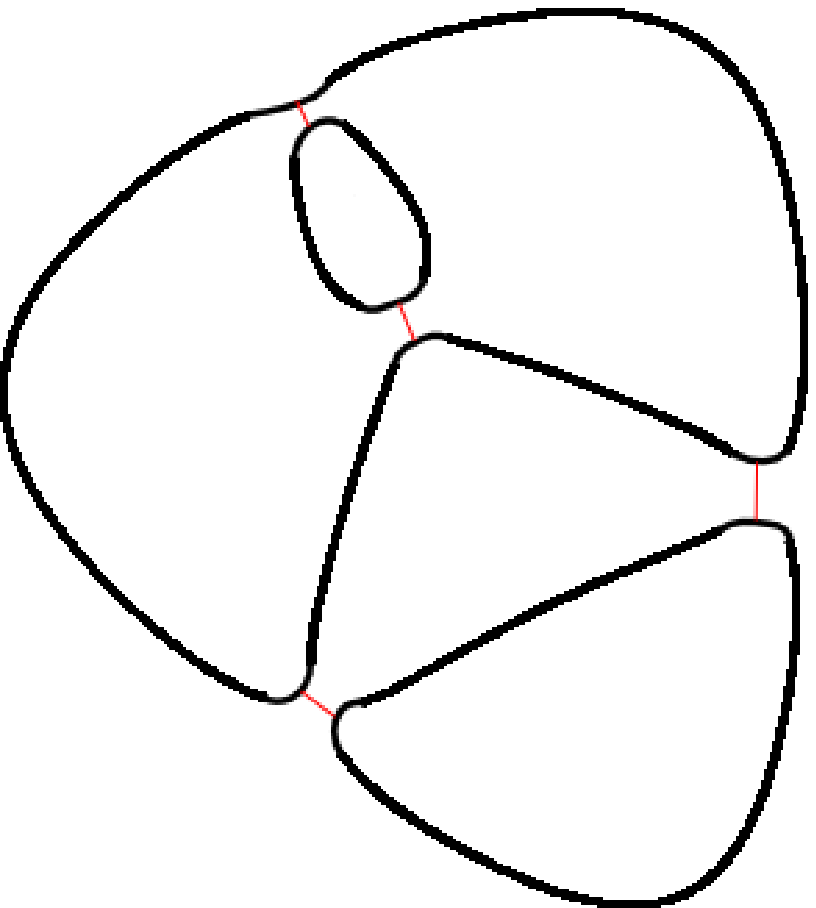}
\caption{Two algorithm branches corresponding to different bigons.}
\label{fstep2}
\end{figure}

The choice of bigon 1 leads 
to a non-orientable surface (left hand side picture of Figure \ref{fstep2}) while the choice of bigon 2 leads to an orientable surface (right hand side  picture of Figure \ref{fstep2}). 
Both of these surfaces realize the maximal Euler characteristic of -1.  The non-orientable surface realizes the crosscap number of $4_1$, which is 2.
\end{example}

\smallskip

The next lemma is important for  the results in this paper as it will allow us to apply the techniques of Section \ref{sectwo} to  obtain bounds on crosscap numbers of alternating links.

\begin{lemma} \label{incomplement} Let  $D(K)$ be a prime,
alternating, twist-reduced knot  diagram.  There is a spanning surface $S$ that is of
maximal Euler characteristic for $K$, obtained by 
applying Algorithm \ref{AKalgor} to $D(K)$, and  an augmented link $J=J_S$ such that $S$ is in the complement $E(J)$.

\end{lemma}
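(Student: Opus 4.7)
The plan is to run Algorithm \ref{AKalgor} on $D(K)$ to produce a Kauffman state $\sigma$ together with its state surface $S=S_\sigma$ of maximal Euler characteristic, and then take $J=J_S$ to be the augmented link whose crossing circles encircle the twist regions of $D(K)$. The work is to choose an execution of the algorithm for which the resolutions within each twist region of $D(K)$ are uniform, and then to verify that the resulting surface can be isotoped off all the crossing circles.

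The first step is to show that in $\sigma$ every twist region $R$ of $D(K)$ with $c_R\geq 2$ crossings has all of its $c_R$ crossings resolved in the same way. Because Step 1 of Algorithm \ref{AKalgor} always picks the smallest $m$-gon, as long as the (partially resolved) diagram still contains a bigon we are in case (\ref{step2a}) with $m=2$. But case (\ref{step2a}) does not merely resolve the two crossings adjacent to a single bigon: it identifies the entire twist region $R$ containing that bigon and resolves all $c_R$ of its crossings at once, in the unique way that turns the $c_R-1$ bigons of $R$ into state circles. Thus every multi-crossing twist region of the original diagram $D(K)$ is processed en bloc, with a uniform resolution, during the ``bigon phase'' of the algorithm. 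Once this phase terminates, all remaining unresolved crossings are precisely the original single-crossing twist regions of $D(K)$, and any subsequent instance of case (\ref{step2a}) with $m=1$ or case (\ref{step2b}) with $m=3$ resolves such crossings one at a time. Since each of these twist regions contains a single crossing, uniformity is automatic.

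The second step is to let $J$ be the augmented link obtained by adding a crossing circle around each twist region of $D(K)$ and to verify $S\subset E(J)$ after a small isotopy. The verification is local in each twist region. In a twist region $R$ with $c\geq 2$ crossings all resolved consistently, the portion of $S$ lying in $R$ is a ``twisted ribbon'' consisting of $c$ parallel half-twisted bands together with the $c-1$ bigon state circle disks stacked between them, connecting the two outer strands of $R$. The crossing circle around $R$ bounds a crossing disk $D$ that lies between these outer strands and is transverse to the projection plane. Because the bands are parallel, $D$ meets $S$ only in interior arcs (one on each band and one on each bigon disk); consequently $\partial D$, i.e.\ the crossing circle itself, is already disjoint from $S$. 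For a twist region with a single crossing the analogous picture has one band encircled by the crossing circle, again disjointly. Combining these local pictures gives $S\subset E(J)$.

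The main obstacle is the combinatorial bookkeeping in the first step: one must rule out the possibility that a crossing belonging to a multi-crossing twist region of $D(K)$ is prematurely resolved as part of a triangle or a newly appearing monogon in some intermediate subdiagram. This is exactly what the ``smallest $m$-gon first'' rule of Step 1 prevents, since while any bigon of $R$ survives we remain in case (\ref{step2a}) and are forced to resolve $R$ uniformly. Granting the first step, the second is a routine local isotopy inside each twist region, and the lemma follows.
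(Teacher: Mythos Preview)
Your approach is the paper's approach: run the algorithm so that each twist region of $D(K)$ is resolved uniformly, and then place crossing circles so that they encircle the resulting twisted strips. Your first step is fine; in particular, because Step~\ref{step2a} resolves an entire twist region at once, every multi-crossing twist region of $D(K)$ is indeed handled en bloc, and the remaining unresolved crossings are exactly the single-crossing twist regions of $D(K)$ (you should also allow a later $m=2$ instance of Step~\ref{step2a}, since new bigons can appear after the original ones are gone, but this does not affect uniformity).

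The one genuine omission is in your second step. For a single-crossing twist region there are \emph{two} ways to add a crossing circle (Figure~\ref{fig:twoways}), and only one of them yields a circle disjoint from the half-twisted band produced by the chosen resolution; with the other choice the crossing circle links the band. Your sentence ``the analogous picture has one band encircled by the crossing circle, again disjointly'' presupposes the correct choice without saying that a choice is being made or how it depends on $\sigma$. This is exactly the content carried by the notation $J=J_S$: the paper constructs $J$ by selecting, at each single-crossing twist region (whether encountered via a new bigon or via a triangle in Step~\ref{step2b}), the crossing circle that encircles the particular twisted strip determined by the branch of the algorithm that produced $S$; see Figure~\ref{branch}. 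Once you make this choice explicit, your argument is complete and coincides with the paper's.
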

\begin{proof} An augmented link is obtained from $D(K)$ by adding a simple closed curve encircling each twist region. If a twist region involves more than one crossing, then there is only one way to add a crossing circle. Otherwise, there are two ways of adding a crossing circle as shown in Figure ~\ref{fig:twoways}. Let $S$ be a state surface with maximal Euler characteristic obtained by applying Algorithm \ref{AKalgor} to $D(K)$. We will show that there is such an $S$ so that  we  can augment $D(K)$ by making a choice of a crossing circle for each twist region involving a single crossing, such that $S$ lies in the complement of the augmented link. 

For a twist region $R$ involving more than one crossing, and thus consists of a  string of complementary bigon regions arranged end to end, we augment by adding a crossing circle $C_R$ encircling the twist region. Since $D(K)$ is prime, none of its complementary regions can be an $1$-gon. In this case, the algorithm picks the resolution of the crossings of $R$ so that
each bigon  becomes a  state circle following Step ~\ref{step2a}. In any state surface which have this resolution at the crossings of $R$, these bigon disks are joined with twisted strips, and the crossing circle $C_R$ encloses the twisted strips. We may arrange
so that each twisted strip intersects the crossing disk corresponding to $C_R$ only in its
interior.  Hence, the portion of any state surface obtained by the algorithm involving a twist region containing at least one bigon, will lie in the complement of  $C_R$. See Figure ~\ref{bigonstrips}.  
\begin{figure}[ht]
\includegraphics[scale=.1]{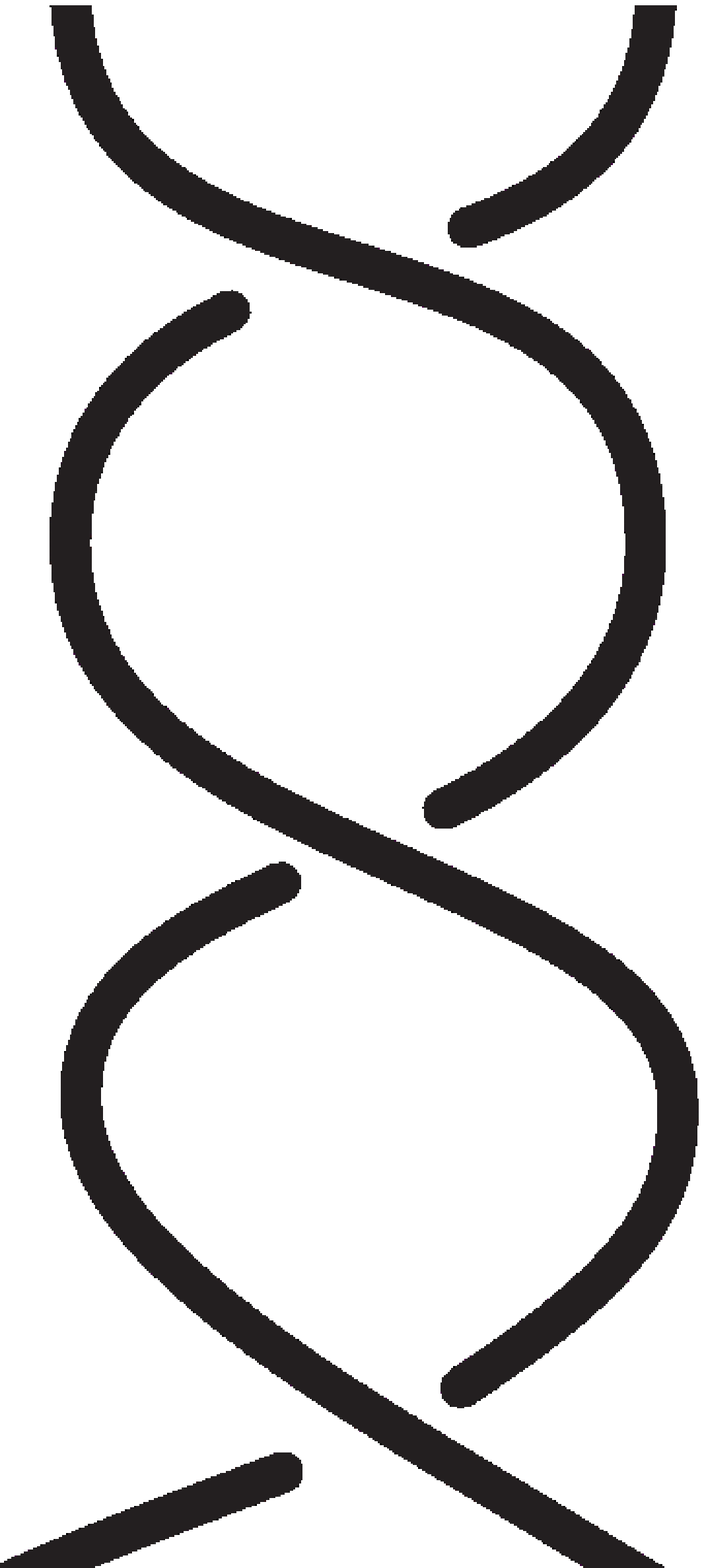}
\hspace{2cm} 
\includegraphics[scale=.1]{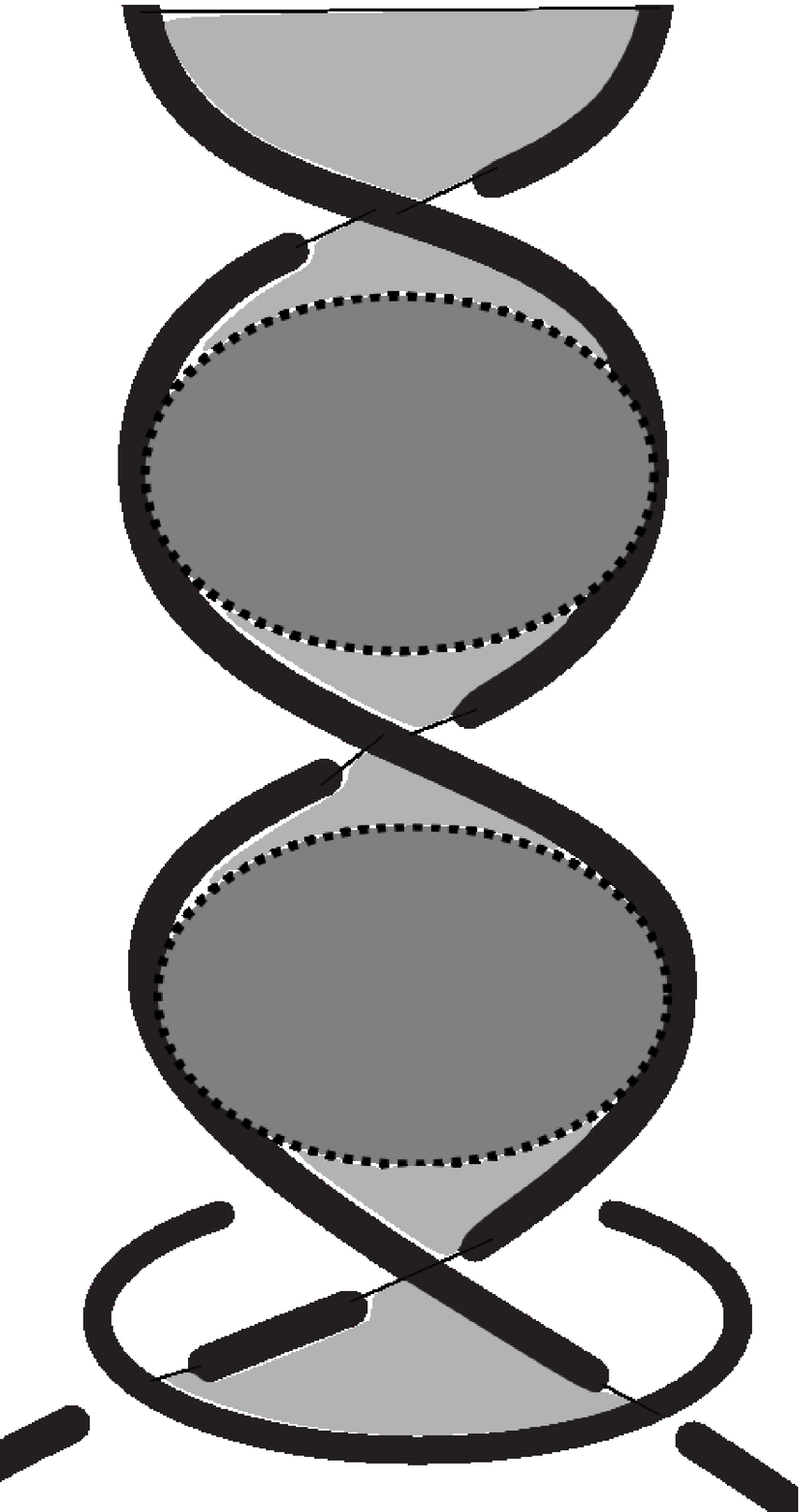}
\caption{ The portion of $S$ through a twist region with more than one crossing and the crossing circle for the twist region.}
\label{bigonstrips}
\end{figure} 
 
Run Algorithm \ref{AKalgor} till all the twist regions involving more than one bigon have been resolved and augmented as above.
Let 
$D'$ be one of the resulting link diagrams at this stage of the algorithm. If there is a bigon on $D'$ that comes from a bigon in the original diagram $D(K)$ then we apply step 1
of Algorithm \ref{AKalgor} to this bigon and we add an augmentation component the same way as in Figure \ref{bigonstrips}.

Suppose that $D'$ contains a bigon that was not a bigon on $D(K)$. Since $D(K)$ is twist reduced, such bigons in $D'$ can only come
from triangles in $D(K)$ that had a twist region with more than one crossings attached to them.
If there is such a bigon in $D'$,  Algorithm \ref{AKalgor} will apply Step ~\ref{step2a} to these crossings such that the bigon becomes a state circle. 
Then we choose an augmentation for each of the two crossings by putting two crossing circles, one for each of the two crossings resolved, so that each crossing circle encloses a twisting strip from the resolution at one crossing. The portion of the state surface coming from resolving these two crossings will then also be disjoint from the crossing circles. Repeat this procedure following the  algorithm until there are no more bigons on the resulting diagrams $D'$.

 Now each of the crossings  on $D'$ corresponds to a twist region of $D(K)$ containing a single crossing. We decide on which way to add a crossing circle to each of these twist regions. 
If there are no more crossings left in the projection, then we are done. Otherwise, we have a projection for whom a minimal $n$-gon is a triangle by Lemma ~\ref{triangle}, and Step ~\ref{step2b} of the  algorithm is applied to resolve each of the three crossings of the triangle, see Figure ~\ref{3-gon}. In addition, each of the remaining crossings is a twist region in the original projection $D(K)$. Let $S$ be a maximal Euler characteristic surface obtained from the algorithm. Each time that Step ~\ref{step2b} is applied, a branch of the algorithm is chosen to generate $S$. We augment each of the three crossings of the triangle based on which branch is chosen. In either branch, we add a crossing circle for each of the three crossings, such that it encircles the crossing strip from the resolution of the chosen branch. See Figure ~\ref{branch}. 

\begin{figure}[ht]
\includegraphics[scale=.22]{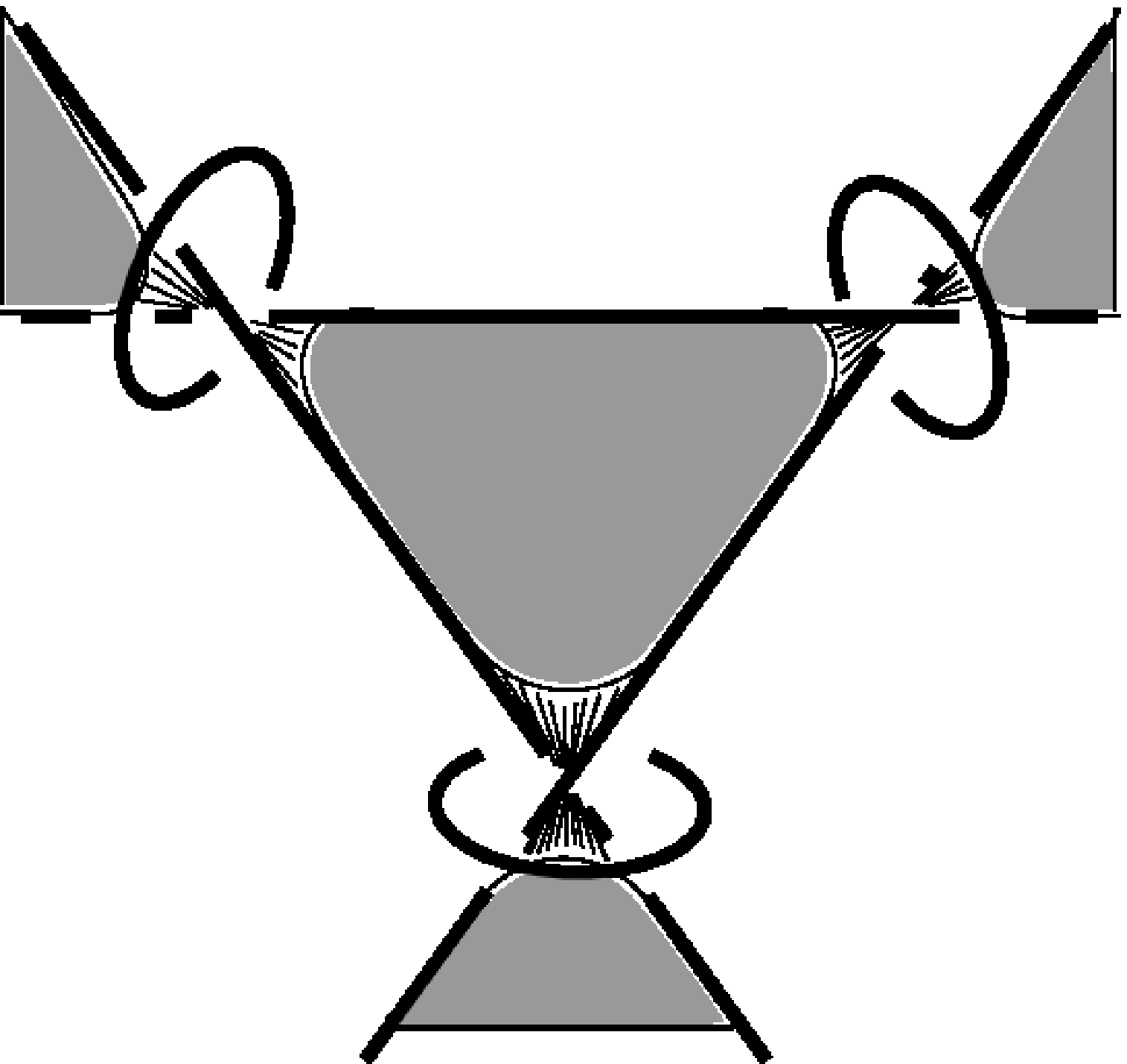}
\hspace{3cm}
\includegraphics[scale=.22]{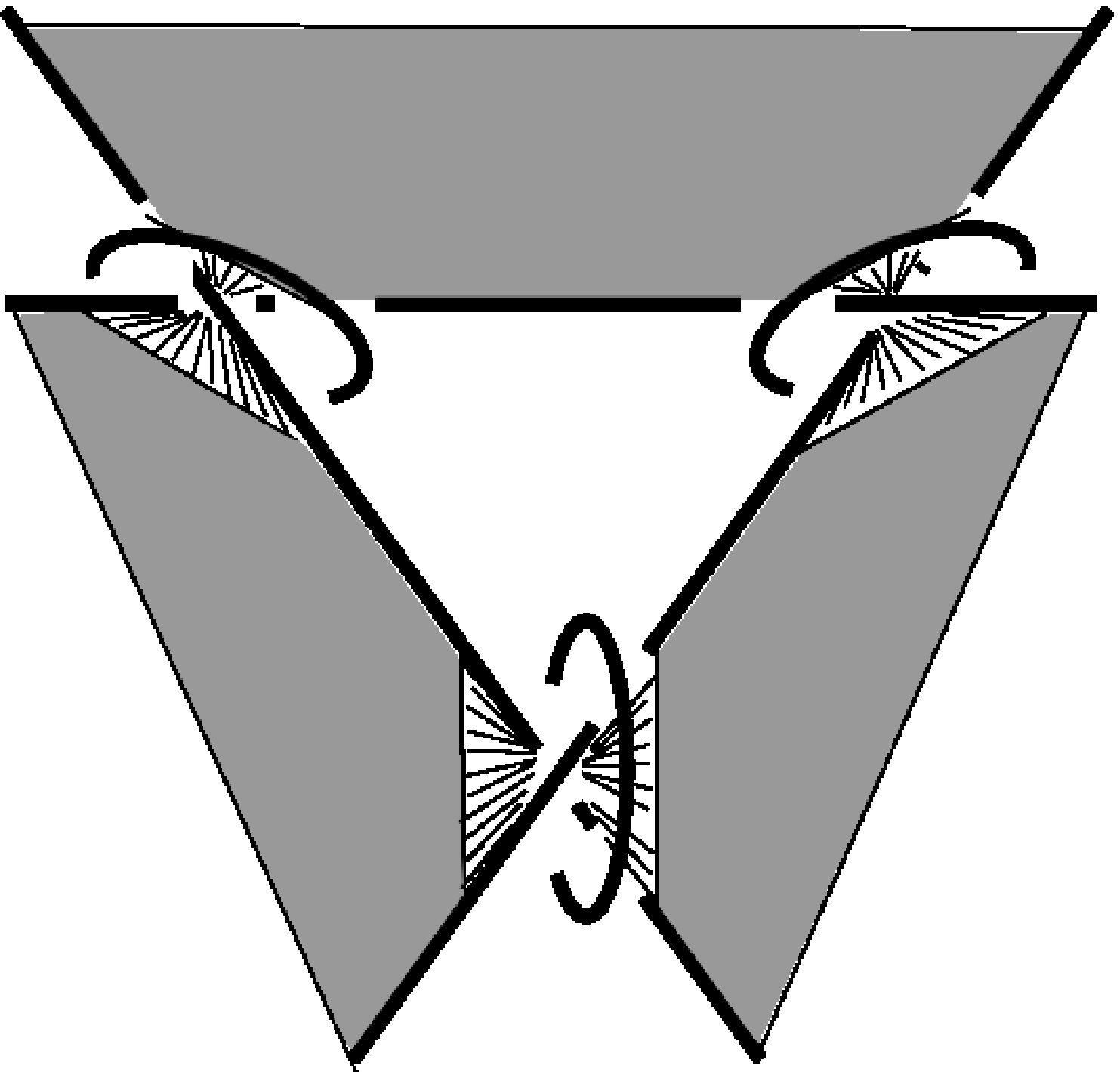}
\caption{Surfaces from two branches of the splitting in Step 2b in the Adam-Kindred algorithm and the corresponding choice of augmentation.
}\label{branch} 
\end{figure} 

We repeat as in Step 3 of  Algorithm \ref{AKalgor} which will make a decision for splitting at the rest of the crossings. If a bigon is encountered again as a minimal $n$-gon we repeat the procedure for a bigon  region where each of the crossings belongs to a twist region in $D(K)$. Otherwise we apply the procedure for when a minimal $n$-gon is a triangle. We stop when there are no more twist regions in $D(K)$ to augment.
\end{proof}

\subsection{Normalization and crosscap number estimates} 
Here we use the results of Section \ref{sectwo} to obtain two-sided bounds of
the crosscap number of an alternating link in terms of the twist number of any prime, twist-reduced alternating link diagram. 

We begin with the following lemma that shows that the crosscap number of an alternating link is always bounded by the twist number of any alternating projection  from above; that is, primeness and twist-reducibility is not needed for this part.

\begin{lemma} \label{uppert}
  Let $K \subset S^3$ be a link of $k$ components with an alternating diagram  $D(K)$ that has $t \geq 2$ twist
  regions. Let $C(K)$ denote the crosscap number of $K$. We have

$$  C(K) \, \leq \, t+2-k.
  $$
\end{lemma}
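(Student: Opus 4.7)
The plan is to exhibit a non-orientable spanning surface for $K$ whose crosscap number is at most $t+2-k$. The surface will be constructed as a state surface $S_\sigma$ for a specific Kauffman state $\sigma$, and then modified by attaching a crosscap if it turns out to be orientable.

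First I would define $\sigma$ by choosing, in each twist region $R$ of $D(K)$ with $c_R$ crossings, the uniform resolution of the $c_R$ crossings that turns each of the $c_R-1$ complementary bigons of $R$ into a state circle; in a twist region consisting of a single crossing the choice is immaterial. This is precisely the resolution used in Step \ref{step2a} of Algorithm \ref{AKalgor}.

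Next I would count the state circles $v_\sigma$ of $\sigma$. The bigons inside the twist regions contribute $\sum_R (c_R-1)=c-t$ state circles, where $c$ is the total number of crossings of $D(K)$. The remaining state circles are in bijection with the state circles of the state $\sigma'$ that $\sigma$ induces on the reduced diagram $D'$ obtained from $D(K)$ by collapsing each twist region to a single crossing. Since $t\geq 2$, the diagram $D'$ has at least two crossings, and any state of such a diagram produces at least one state circle. Hence $v_\sigma\geq c-t+1$, and the state surface satisfies
$$\chi(S_\sigma)\;=\;v_\sigma-c\;\geq\;1-t.$$

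Since $S_\sigma$ is a spanning surface for $K$ with $k$ boundary components, there are two cases. If $S_\sigma$ is non-orientable, then $C(K)\leq 2-\chi(S_\sigma)-k\leq 1+t-k$, which is even stronger than the claim. If $S_\sigma$ is orientable, I would produce a non-orientable spanning surface $S'_\sigma$ with the same boundary by removing a small open disk from the interior of $S_\sigma$ and gluing in a M\"obius band. This operation decreases the Euler characteristic by one, so $\chi(S'_\sigma)\geq -t$, giving
$$C(K)\;\leq\;2-\chi(S'_\sigma)-k\;\leq\;t+2-k.$$

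The main obstacle is the counting step: precisely matching the "outer" state circles of $\sigma$ with those of $\sigma'$ on $D'$ and confirming at least one such circle exists. The hypothesis $t\geq 2$ is what makes $D'$ nontrivial, and the rest of the argument is Euler characteristic bookkeeping together with the standard crosscap modification of an orientable surface.
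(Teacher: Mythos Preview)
Your argument is correct and follows essentially the same route as the paper's proof: both construct the bigon-preserving state surface, count $v_\sigma\geq (c-t)+1$ to obtain $-\chi(S_\sigma)\leq t-1$, and then handle the orientable case by absorbing one extra unit of Euler characteristic. The only difference is cosmetic: the paper invokes Theorem~\ref{AK} (Adams--Kindred) to pass from $\chi(S)$ to $C(K)$, whereas you bypass that citation with the direct crosscap/M\"obius modification, which is all that is actually needed for an upper bound.
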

\begin{proof} Let $S$ be a surface obtained by applying Algorithm \ref{AKalgor}  to $D(K)$ and
let $\sigma$ denote the   Kaufmann state  of $D(K)$ to which 
$S$ corresponds.
Let $v_{b}$ denote the number of state circles that are bigons and let $v_{nb}$ denote the non-bigon
state circles. Also let $c_1$ denote the number of crossings in $D(K)$ each of which forms its own 
twist region and let $c_2$ denote the remaining crossings. Since we assume that there are at least two twist regions we have $v_{nb}\geq 1$. Thus
we have
  {\setlength\arraycolsep{3pt}
\begin{eqnarray*}
- \chi(S)=
&=& c_2-v_b+c_1-v_{nb} \\
&\leq& t-1.\\
\end{eqnarray*}}
Now the upper bound follows at once from Theorem \ref{AK}.
\end{proof}

 Before we are able to estimate $C(K)$ from below we need some preparation.
Let  $D(K)$ be  a link diagram and let $J$ be an augmented link obtained from $D(K)$
with $L$ the corresponding fully augmented link. Suppose that the exterior $E(J)\cong E(L)$ is hyperbolic with an angled polyhedral decomposition $\{P_1, P_2\}$ as described in Section
\ref{sectwo}. Let $S$ be a spanning surface of $K$ that realizes $C(K)$.
Having Lemma \ref{incomplement} in mind, we will also assume that 
$S$ is disjoint from the crossing circles of $J$; hence we can view it as a surface in $E(J)\cong E(L)$.

We wish to apply Theorem \ref{chi-etimate} to estimate $\chi(S)$.  
In order to do so
we need to have $S$ in normal position with respect to the polyhedral decomposition $\{P_1, P_2\}$. 
The standard argument of making an surface that is incompressible and $\partial$-incompressible 
(that is, \emph{essential}) normal  in a triangulated 3-manifold  \cite{haken},
can be adjusted to work  in the setting of  more general polyhedral decompositions. The argument is written down by Futer and Guéritaud
\cite[Theorem 2.8]{fg:arborescent}.   Note however  that surfaces that realize $C(K)$ need  not be essential in $E(K)$. For instance,  if all the state surfaces obtained from Algorithm \ref{AKalgor} applied to  an alternating projection $D(K)$
are orientable, then a spanning surface that realizes $C(K)$ is obtained from a minimal genus Seifert surface by adding a half-twisted
band. Such a surface is $\partial$-compressible.
This, for example, happens for the knot  $7_4$  \cite{Adamsstate}.

For our purposes we are only interested in the question 
of whether $S$  can be converted to a spanning surface  that is normal with
respect to the polyhedral decomposition of $E(L)$, without changing $\chi(S)$ and the surface orientability.
For this we examine how a spanning surface  $S$ that realizes  $C(K)$ behaves under the general  process that converts any properly embedded
surface in $E(J)$ into a normal one with possibly different topology \cite{Jaco-Rub}.
We have the following lemma that applies beyond the class of alternating links and might be of independent
interest.

\begin{lemma}\label{normal} Let $K \subset S^3$ be a   link with a  prime, 
  twist-reduced diagram  $D(K)$. Suppose that $D(K)$ has $t \geq 2$ twist
regions.  Suppose that there is a spanning surface $S$  in the exterior $E(J)\cong E(L)$ of an augmented link of $D(K)$ and  such that $C(S)=C(K)$.
Then exactly one of the following is true:
\begin{enumerate}
\item There  is a non-orientable,  spanning surface $S' \subset E(L)$ for  $K$, that  is in normal form and
such that $\chi(S)=\chi(S')$.
\item We have $C(K)=2g(K)+1$.
Furthermore, there is  a Seifert surface of $K$ that lies in $E(L)$ such that it realizes $g(K)$ and it is in normal form.
\end{enumerate}
\end{lemma}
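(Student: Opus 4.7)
The plan is to apply the standard normal-form procedure (a sequence of isotopies, interior compressions along essential compressing disks, boundary compressions, and removals of trivial closed components) to $S$, as developed by Haken and Jaco--Rubinstein and adapted to rectangular-cusped polyhedral decompositions by Futer and Guéritaud in \cite[Theorem 2.8]{fg:arborescent}. Each interior compression raises $\chi$ by $2$, each boundary compression by $1$, and the remaining moves preserve $\chi$. A first step is to verify that the surface stays a spanning surface for $K$ disjoint from the crossing disks of $L$ throughout: interior compressions leave $\partial S$ unchanged; both compression and boundary compression disks can be arranged disjoint from the essential crossing disks of $L$ by innermost-disk and outermost-arc arguments; and the arc-surgery performed by a boundary compression on $\partial E(K)$ preserves the algebraic intersection number with each meridian, so Lemma \ref{spanning} ensures the result is again a spanning surface. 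Moreover, since $E(L)$ is irreducible and $S^3$ contains no projective plane, every closed component encountered is either a trivial sphere (discarded) or has $\chi \leq 0$, so the Euler characteristic of the spanning part is always at least that of the ambient surface.

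If non-orientability can be preserved at every step, then the final normal surface $S' \subset E(L)$ is a non-orientable spanning surface with $\chi(S') \geq \chi(S)$. Since $S$ already realizes the minimum $C(K)$ over non-orientable spanning surfaces, equality must hold, placing us in case (1).

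Otherwise, at some step a move converts a non-orientable $S_i$ into a surface $S_{i+1}$ whose spanning part $\Sigma_{i+1}$ is orientable, hence a Seifert surface for $K$ satisfying $\chi(\Sigma_{i+1}) \leq 2 - 2g(K) - k$. The crucial observation is that this move cannot be an interior compression: that would give $\chi(\Sigma_{i+1}) \geq \chi(S_{i+1}) = \chi(S_i) + 2 \geq \chi(S) + 2 = 4 - C(K) - k$, and combined with the Seifert bound this forces $C(K) \geq 2g(K) + 2$, contradicting Clark's upper bound $C(K) \leq 2g(K) + 1$. Therefore the move must be a boundary compression, yielding $\chi(\Sigma_{i+1}) \geq \chi(S) + 1 = 3 - C(K) - k$; the Seifert bound now gives $C(K) \geq 2g(K) + 1$, and combined with Clark's bound we conclude $C(K) = 2g(K) + 1$, the first assertion of case (2).

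For the second assertion of case (2), rather than continuing normalization on $\Sigma_{i+1}$, I would produce the desired normal minimum-genus Seifert surface independently: take any minimum-genus Seifert surface $\Sigma$ for $K$ in $S^3$, isotope it in $E(K)$ to be disjoint from the crossing disks of $L$ by standard innermost-disk and outermost-arc surgeries (these eliminate intersections without raising the genus, since $\Sigma$ is incompressible and $\partial$-incompressible in $E(K)$), and then apply \cite[Theorem 2.8]{fg:arborescent} to put the resulting essential surface in $E(L)$ into normal form. The main obstacle is the detailed bookkeeping in the normal-form procedure on $S$: verifying that the spanning-surface property survives each boundary compression, and confirming that any step destroying non-orientability is forced to be a boundary compression rather than an interior one.
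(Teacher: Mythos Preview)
Your outline follows the same strategy as the paper: run the normalization procedure of \cite[Theorem 2.8]{fg:arborescent} and analyse each move. Your use of Clark's bound $C(K)\le 2g(K)+1$ to exclude an interior compression that produces an orientable spanning surface is equivalent to the paper's ``add a half-twisted band'' argument. Two points need repair.

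\medskip

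\textbf{(a) The inequality chain for interior compressions.} The step $\chi(\Sigma_{i+1})\ge \chi(S_{i+1})=\chi(S_i)+2$ fails when the compression is separating and splits off a $2$--sphere, since discarding the sphere lowers $\chi$ by~$2$. The clean argument (and the one the paper uses) is to observe that if an interior compression turns the spanning part orientable, the compression must be \emph{non-separating}: in a separating compression the closed piece lies in $S^3$ and is therefore orientable, so the spanning piece inherits the non-orientability of $S_i$. For a non-separating compression one has the exact equality $\chi(\Sigma_{i+1})=\chi(\Sigma_i)+2=\chi(S)+2$, and your Clark-bound contradiction then goes through. You should also make explicit the induction guaranteeing $\chi(\Sigma_i)=\chi(S)$ at every step prior to the critical one.

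\medskip

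\textbf{(b) The second assertion of case (2).} Here there is a genuine gap. You propose to take an \emph{arbitrary} minimum-genus Seifert surface $\Sigma$ and push it off the crossing disks by innermost-circle and outermost-arc surgeries. But in $E(K)$ a crossing disk is a pair of pants, with one boundary component the crossing circle and the other two meridians of $K$. An arc of $\Sigma\cap D$ running from one meridian boundary to the other does \emph{not} cut off a $\partial$-compression disk for $\Sigma$ in $E(K)$: each complementary region of the arc in $D$ meets the crossing-circle boundary, which is not part of $\partial E(K)$. Such an arc always exists (since $\partial\Sigma$ meets each meridian once), and your argument gives no mechanism to remove it. The paper sidesteps this entirely: the orientable surface $S'$ produced by the non-separating $\partial$-compression already lies in $E(L)$, is shown to be of minimal genus and hence essential, and can therefore be normalized directly by \cite[Theorem 2.8]{fg:arborescent}. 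You should use that surface rather than an arbitrary one.
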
  
\begin{proof} 

By assumption $S$ realizes $C(K)$.  Thus $S$ has maximal Euler characteristic among all non-orientable spanning surfaces
of $K$.
As discussed above, we will assume that $S$ is not  necessarily essential and examine how compressions
and $\partial$-compressions may interfere with a process of converting $S$  to a normal surface.
 Examining the moves required during this process \cite[Theorem 2.8]{fg:arborescent}, and since $S$ contains no closed components, we see that there are two situations to consider:
\begin{enumerate} 
\item $S$   admits a compression disk       
$D$, that lies in the interior of a single face
of a polyhedron $P\in \{P_1, P_2\}$. 

\item The intersection of  $S$ with  a face of a polyhedron $f\subset \partial P$ is an arc $\gamma$ that runs from an edge of $e\subset \Gamma \subset \partial P$  to itself, or from an interior edge to an adjacent 
boundary edge.
\end{enumerate}

In (1) there are two cases to consider according to whether   $\partial D$ separates $S$ or not.
First suppose that $\partial D$  is non-separating on $S$.
Compressing along $D$ we obtain  a spanning surface $S'$ for $K$ with $\chi(S')=\chi(S)+2$.
Since $S$  realizes $C(K)$,  $S'$ cannot be non-orientable. Thus we have an orientable
spanning surface of $K$; that is a Seifert surface.  Adding a half-twisted band to $S'$ (i.e. adding a crosscap)
produces a non-orientable spanning  surface $S_1$ of $K$ with $\chi(S_1)=\chi(S')-1$. Thus,
$S_1$ is a non-orientable spanning  surface with $\chi(S_1)=\chi(S)+1> \chi(S)$ which contradicts the fact that
$S$ realizes $C(K)$. Thus this case will not happen.

Suppose now that $\partial D$ separates $S$.
We will look at  such a disk so that $\partial D$ is innermost in the sense
 that one of the
components of $S\setminus \partial D$ lies entirely in a single polyhedron $P$.
Compressing along $D$ gives two surfaces $S_1$ and $S_2$ with
$\chi(S)=\chi(S_1)+\chi(S_2)-2$. 

Suppose that both surfaces have non-empty boundary.
Then the disjoint union of $S_1, S_2$ is a non-orientable  spanning surface of $K$ with Euler characteristic
$\chi(S_1)+\chi(S_2)>\chi(S)$, contradicting the assumption that $S$ realizes $C(K)$.
Thus, one of  $S_1, S_2$, say  $S_1$, must be a closed surface
and all of $\partial S$ is left on $S_2$.
Since $S_1$ is a closed surface embedded in $S^3$, it is orientable. Hence $S_2$ is a non-orientable.
 Since $S$ realizes $C(K)$ and $\chi(S_1)\leq 2$, it follows that
 $\chi(S)=\chi(S_2)$.
Thus we may ignore $S_2$, replace
$S$ with $S_2$ and continue with the normalization process.

Next we treat case (2): The arc $\gamma$ cuts off a disk $D\subset f$, with $ \partial D$ consisting
of $\gamma$  and an arc that lies on the boundary of $f$. By an innermost argument, we may assume
that the interior of $D$ is disjoint from $S$.   Again, following the argument  in the proof of
\cite[Theorem 2.8]{fg:arborescent}, if
$\gamma$ runs from an interior edge to an adjacent boundary edge, 
the disk $D$ will guide an isotopy of $S$ that can be used to eliminate the arc $\gamma$ \cite[Figure 2.2]{fg:arborescent}.
Similarly, if $e$ is an interior edge or $e$ is a boundary edge and $f$ is a boundary face, we can use $D$ to obtain an isotopy that
will eliminate $\gamma$ and decrease 
the number of intersections of $S$ with $\Gamma$ (compare, left panel of \cite[Figure 2.1]{fg:arborescent}). It follows, that the only case left to examine 
is
when  $e$ is a boundary edge and $f$ is an interior face of $S$.
In this case $D$ is a $\partial$-compression disk of $S$.  Consider the arc
$\delta:= \partial D\setminus \gamma \subset e$ and  let $T$ denote the boundary component of $\partial E(K)$ containing it.
There are two cases to consider: (i) $\delta$ cuts off a disk in the annulus $T\setminus \partial S$; and (ii)
$\delta$ runs from one component of  $T\setminus \partial S$.
We will perform surgery  ($\partial$-compression)  along  $D$. This may cut $S$ into more components or not according to
whether we are in case (i) or (ii).
Surgery along a $\partial D$ doesn't change the total geometric intersection number of $\partial S$ with the meridians of $\partial E(J)$. Thus, by Lemma \ref{spanning}, it will produce a 
spanning surface of $K$ and possibly some (redundant) components.

First suppose  that surgery along $D$ splits $S$ into two surfaces $S_1$ and $S_2$. 
Suppose that both surfaces have non-empty boundary.
Then the disjoint union of $S_1, S_2$ is a non-orientable  spanning surface of $K$ with Euler characteristic
$\chi(S_1)+\chi(S_2)>\chi(S)$, contradicting the assumption that $S$ realizes $C(K)$.
Thus all
 the components of $\partial S$ disjoint from $D$ must  remain on one of $S_1, S_2$, say on $S_2$ and the intersections of $\partial S$ with the meridians of $\partial E(K)$, also  remain on $\partial S_2$. 

Thus $S_1$ has one boundary component that
is either homotopically  trivial on $\partial E(K)$ or isotopic to a meridian of $\partial E(K)$. In either case $\partial S_1$ bounds a disk  in $S^3$. We may cap $\partial S_1$ 
with this disk to
produce a closed surface embedded in $S^3$, which must be orientable. Hence $S_2$ is a non-orientable spanning surface
for $K$. 
 But since $S$ realizes $C(K)$ and $\chi(S_1)\leq 1$
we must have $\chi(S)=\chi(S_2)$. 
 Thus we may ignore $S_1$, replace
$S$ with $S_2$ and continue with the normalization process.

Next suppose   that surgery along $D$ doesn't disconnect $S$. Then we get  a spanning surface  $S'$, with $\chi(S')=\chi(S)+1$.
Since $S$ was assumed  to realize $C(K)$,  $S'$ cannot be non-orientable. Thus we have an orientable
spanning surface of $K$.  We claim that $S'$ must be a minimal genus Seifert surface, that is $g(S')=g(K)$.
For,  suppose that $K$ has  a Seifert surface $S_1$ with $\chi(S_1)>\chi(S')$. Then adding a half-twisted band to $S_1$
would give a non-orientable spanning surface $S''$ with $\chi(S'')=\chi(S_1)-1>\chi(S')-1=\chi(S)$, contradicting the fact that
$S$ realizes $C(K)$. Thus, $S'$ is a minimal genus Seifert surface of $K$  that lies in $E(L)$; that is we have $g(S')=g(K)$.
Now it is clear that the surface obtained by a half-twisted band to $S'$ is a non-orientable spanning surface $S''$ of $K$ that has maximal Euler characteristic
among all such surfaces. Thus we have $C(K)=C(S'')=2g(S')+1=2g(K)+1$. Since $S'$ is minimal genus and orientable,
it is incompressible and $\partial$-incompressible in $E(K)$ and thus in $E(L)$. Hence we may isotope $S'$ to be normal with respect to the polyhedral decomposition  \cite[Theorem 2.8]{fg:arborescent}.
\end{proof}

 Now we are ready to prove the following.

\begin{theorem} \label{chi-etimate1}
  Let $K \subset S^3$ be a link of $k$ components with a prime,
  twist-reduced alternating diagram  $D(K)$. Suppose that $D(K)$ has $t \geq 2$ twist
  regions. Let $C(K)$ denote the crosscap number of $K$. We have

$$ \left\lceil \frac{t}{3} \right\rceil\, + \, 2-k \, \leq C(K) \, \leq \, t+2-k,
  $$
where $\lceil \cdot \rceil$ is the ceiling
  function that rounds up to the nearest larger integer. Furthermore, both  bounds are sharp.

\end{theorem}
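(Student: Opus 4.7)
The plan is to combine three tools already developed in the paper: Theorem~\ref{AK} (the crosscap number of an alternating link is realized by a state surface of maximal Euler characteristic), Lemma~\ref{incomplement} (such a state surface can be arranged to lie in the exterior of an augmented link), and the combinatorial Gauss--Bonnet estimate of Theorem~\ref{chi-etimate}. The upper bound $C(K) \leq t+2-k$ is already recorded as Lemma~\ref{uppert}, which does not even use twist-reducibility, so all of the work is in the lower bound.

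First, I would apply Algorithm~\ref{AKalgor} together with Lemma~\ref{incomplement} to produce a state surface $S \subset E(J) \cong E(L)$ of maximal Euler characteristic, where $J$ is an augmented link of $D(K)$. By Theorem~\ref{AK}, either $S$ is non-orientable and realizes $C(K) = 2 - \chi(S) - k$, or every maximal-$\chi$ state surface is orientable and $C(K) = 2g(K)+1$. In the latter case, I would adjoin a half-twisted band to $S$ inside $E(J)$ to obtain a non-orientable spanning surface in $E(J)$ realizing $C(K)$. Either way, we have a non-orientable spanning surface $S_0 \subset E(J)$ with $C(S_0) = C(K)$, to which Lemma~\ref{normal} applies. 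This lemma yields either a normal non-orientable spanning surface $S'$ with $\chi(S') = \chi(S_0)$, or a normal minimum-genus Seifert surface $S'$ together with the identity $C(K) = 2g(K)+1$. In both cases $S' \subset E(L)$ is disjoint from all crossing circles, so in the language of Theorem~\ref{chi-etimate} we have $n = 0$.

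Next, apply Theorem~\ref{chi-etimate} to $S'$ with $n = 0$, which gives $-\chi(S') \geq \lceil t/3 \rceil$. If $S'$ is non-orientable with $k$ boundary components, then $C(K) = 2 - \chi(S') - k \geq \lceil t/3 \rceil + 2 - k$, as desired. If $S'$ is a connected minimum-genus Seifert surface, then $-\chi(S') = 2g(K) + k - 2$, so
$$ C(K) = 2g(K) + 1 \geq \left\lceil \frac{t}{3} \right\rceil + 3 - k, $$
which is even stronger than the claimed inequality. Sharpness of both bounds will be exhibited by the infinite families of alternating links computed in Section~\ref{calculations}.

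The main obstacle is the normalization step: a spanning surface that realizes $C(K)$ need not be essential in $E(K)$ (for example, adjoining a crosscap to a minimum-genus Seifert surface produces a $\partial$-compressible surface), so the standard Jaco--Rubinstein/Haken normalization moves could \emph{a priori} increase $\chi$ by splitting off closed pieces or by compressing along essential disks. Lemma~\ref{normal} is precisely the analysis needed to show that every such move is either harmless (the removed summand is a sphere or a disk, embedded orientably in $S^{3}$) or can be traded for an orientable alternative realizing $g(K)$, in which case the second branch of the argument above delivers an even stronger bound. Once normalization is under control, the combinatorial Gauss--Bonnet estimate closes the argument with no further work.
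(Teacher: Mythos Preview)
Your proposal is essentially the paper's own argument: upper bound from Lemma~\ref{uppert}, lower bound by combining Lemma~\ref{incomplement}, Lemma~\ref{normal}, and Theorem~\ref{chi-etimate} with $n=0$, followed by the two-branch conclusion according to whether the normalized surface is non-orientable or a minimal-genus Seifert surface.

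There is one imprecision worth flagging. Your dichotomy ``$S$ is non-orientable'' versus ``every maximal-$\chi$ state surface is orientable'' is not exhaustive: the particular $S$ furnished by Lemma~\ref{incomplement} could be orientable while some \emph{other} maximal-$\chi$ state surface is non-orientable. In that situation Theorem~\ref{AK} gives $C(K)=2-\chi(S)-k$, not $2g(K)+1$, so adjoining a half-twisted band to $S$ would produce $S_0$ with $C(S_0)=C(K)+1$, and Lemma~\ref{normal} does not apply to it. The paper treats this as a separate third case, observing that such an orientable $S$ is necessarily a minimal-genus Seifert surface (it has maximal $\chi$ among all spanning surfaces), hence essential, hence directly normalizable; then $C(K)=2-\chi(S)-k\geq\lceil t/3\rceil+2-k$ follows from Theorem~\ref{chi-etimate} just as in your first branch. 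Alternatively, the proof of Lemma~\ref{incomplement} actually works for \emph{any} branch of Algorithm~\ref{AKalgor}, so whenever a non-orientable maximal-$\chi$ state surface exists you may simply choose it as $S$ from the outset. Either patch closes the gap with no new ideas.

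For sharpness, note that Section~\ref{calculations} supplies only lower-bound-sharp families; the paper exhibits upper-bound sharpness within the proof itself via $10_3$ and its relatives, so you should cite an explicit example rather than Section~\ref{calculations} alone.
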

\begin{proof} 
The upper bound comes from Lemma \ref{uppert}. To derive the lower bound,
let $S$ be a state surface obtained from Algorithm \ref{AKalgor}  to $D(K)$
and let $J$ be the augmented link of Lemma \ref{incomplement}, with $L$ the corresponding fully augmented link. By Theorem
\ref{AK}, and the proof of Lemma \ref{incomplement} one of the following is true:
\begin{enumerate}

\item $S$ is non-orientable  and realizes $C(K)$; that is we have  $C(K)=C(S)$.
\item $S$ is orientable and we have $C(K)=2g(S)+1$.
\item $S$ is orientable and there is a non-orientable spanning surface of $K$ with the same Euler characteristic. 
\end{enumerate}
Suppose we are in case (1).  Then by Lemma \ref{normal} we may replace $S$ by a spanning surface that also realizes $C(K)$ and is normal
with respect to the polyhedral decomposition of $E(L)$. Theorem \ref{chi-etimate}
gives $$C(K)=C(S)\, =\,  2\, -\, \chi(S)\, -\, k\, \geq \,  \left\lceil \frac{t}{3}\right\rceil\, +2\, -k,$$
and the lower bound follows.

On the other, hand if we are in (2) then $S$ is a minimal genus Seifert surface of $K$ and thus
 it is incompressible ands $\partial$-incompressible. Thus we may again replace $S$ with one  into normal form.
Since $S$ is disjoint from the crossing circle components of $J$,
Theorem \ref{chi-etimate}
gives $$C(K)=2g(S)+1=2-\chi(S)-k+1\geq  \left\lceil \frac{t}{3} \right\rceil \,+\,3\,  -\, k.$$ Hence the lower bound follows again.

Finally in Case (3) $S$ is an essential surface and, again, there is an orientable  normal surface of the same Euler characteristic.
Now $C(K)=C(S)\, =\,  2\, -\, \chi(S)\, -\, k$ and the lower bound is obtained exactly as in the case of (1).

\begin{figure}[ht=1in, width=1in]
\includegraphics[scale=.20]{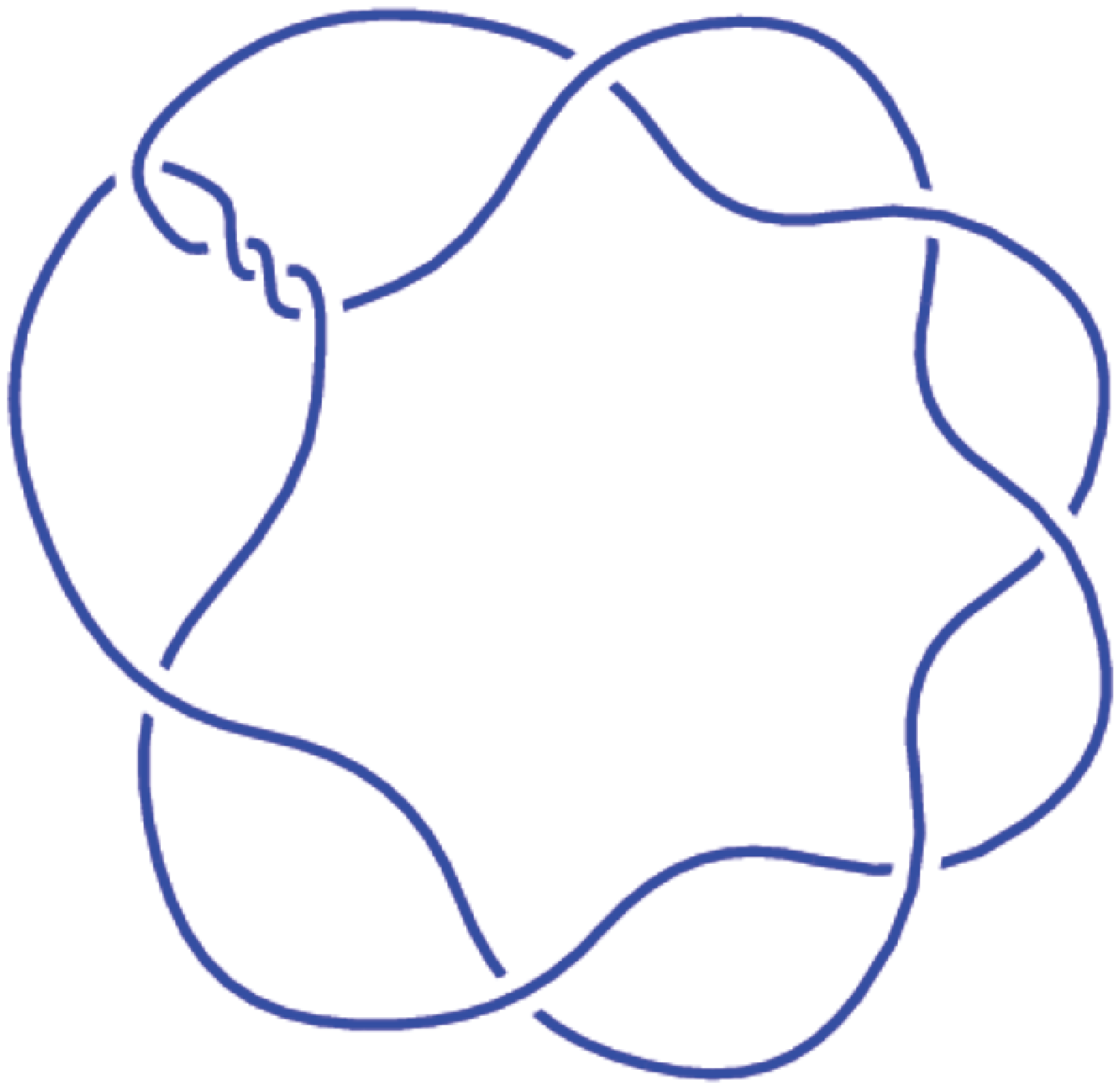}
\hspace{1.0cm} 
\includegraphics[scale=.20]{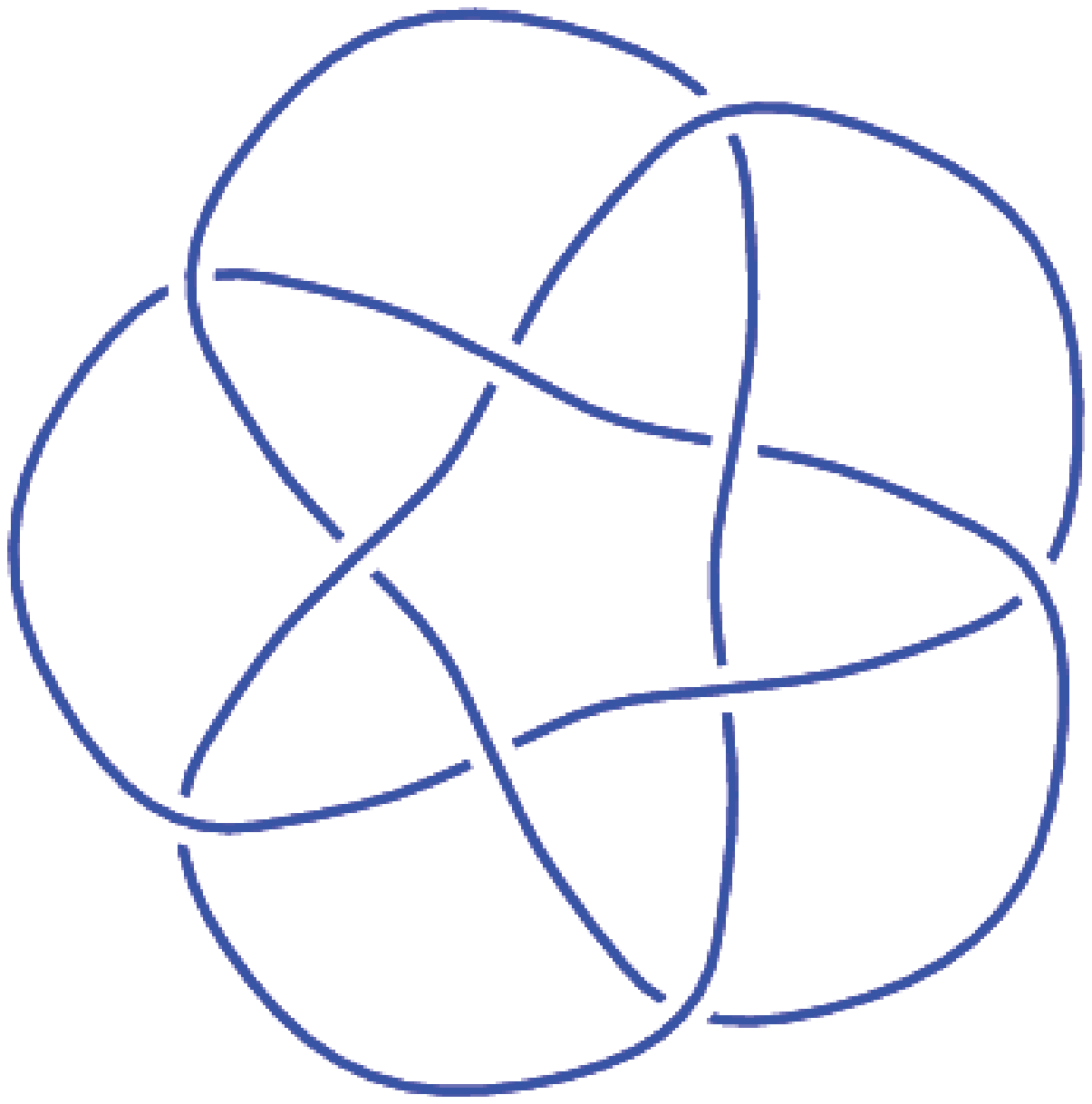}

\caption{ The knots $10_3$ (left) and  $10_{123}$ (right).}
\label{examplec=t}
\end{figure}

It remains to prove that both bounds are sharp:
Consider the alternating knot $10_3$ of Figure \ref{examplec=t}.
The twist number of the diagram shown there is $t=2$ and each twist region consists of more than one crossing.
All the  branches of Algorithm \ref{AKalgor} are seen to give an orientable surface of genus 1.
Thus, by Theorem \ref{AK}, $C(K)=2g(K)+1=3=t+1$. The same argument applies to the knots obtained by adding any even number of crossings in each twist region of
the knot $10_3$. Hence we have an infinite family of alternating knots with $C(K)=2g(K)+1=3=t+1$.

To discuss some examples where our lower bound is sharp, note that if  a  knot $K$   processes an alternating diagram with $c=t$,
then we have

$$1+ \left\lceil \frac{ c}{3}\right\rceil \; \leq \; C(K) \; \leq \;  
\left\lfloor{ \frac{c}{2}}\,\right\rfloor.
$$

Now observe that, for instance, if  $c=t=13$, then $C(K)=6$. Similarly,
 if $c=t=10$, then $C(K)=5$.  A concrete example, is  the knot $10_{123}$ shown in Figure \ref{examplec=t}.
More examples where the lower bound is sharp are discussed in Section \ref{calculations}.
 \end{proof}

Now we explain how Theorem \ref{thm:chi-etimate2}, stated in the Introduction, follows from Theorem  \ref{chi-etimate1}:
For $k=1$, the inequality of Theorem  \ref{chi-etimate1}
becomes
$$ \left\lceil \frac{t}{3}\right\rceil\, + \, 1  \;\leq \; C(K) \; \leq \; t+1.
  $$

Thus the lower bound follows.
Murakami and Yasuhara \cite{upper}
 showed that for a knot $K$ with a connected, prime diagram of $c$ crossings, we have
$$C(K)\leq \left\lfloor{ \frac{c}{2} }\right\rfloor.
$$
Thus the upper bound follows from these two inequalities.

\subsection{Jones polynomial bounds}Now we discuss how the results stated in the introduction follow from the above results. 
Recall that for a knot $K$, we have defined $T_K:= \abs{\beta_K} + \abs{\beta'_K},$
where $\beta_K$ and $\beta'_K$ denote the second and the penultimate coefficients
of the Jones polynomial of $K$, respectively.

\begin{named}{Theorem \ref{thm:cupjones}} 
Let $K$ be a non-split, prime alternating link  with $k$-components and  with crosscap number $C(K)$.
Suppose that $K$ is not a $(2,p)$ torus link.
We have

$$ \left\lceil \frac{T_K}{3}\right\rceil \, +\, 2-k\; \leq \; C(K) \; \leq \; T_K+2-k.
  $$
Furthermore, both  bounds are sharp.
\end{named}
\begin{proof} Let  $D(K)$  be a connected, twist-reduced alternating diagram that has $t \geq 2$ twist
regions. Then, by \cite[Theorem 5.1]{dasbach-lin:volumish},  we have $t=T_K$.
A prime alternating link admits prime, twist reduced alternating diagrams; every alternating diagram can be converted to a twist-reduced one by flype moves \cite[Lemma 4]{lackenby:volume-alt}.
Thus,  the result follows  from Theorem \ref{chi-etimate1}. 
\end{proof}

Now we are ready prove 
 \ref{thm:cupjonesknots} which we restate for the convenience of the reader.

\begin{named}{Theorem \ref{thm:cupjonesknots}}
 Let $K$ be an alternating, non-torus knot  with crosscap number $C(K)$ and let $T_K$ be as above.
We have

$$1+ \left\lceil \frac{ T_K}{3}\right\rceil \; \leq \; C(K) \; \leq \;  {\rm {min}}{ \left\{ T_K+1, \ 
\left\lfloor{ \frac{s_K}{2}}\,\right\rfloor \right\}}
$$
where $s_K$  denotes the span of $J_K(t)$. 
Furthermore, both
 the upper and lower bounds are sharp.
 \end{named}

\begin{proof} Kauffman \cite{Kaufjones} showed that the degree span of the Jones polynomial of an alternating link
is equal to the crossing number of the link. Furthermore, both the degree span and the crossing number of alternating knots are known to 
be additive under the operation of connect sum \cite{lickorish:book}. Using these, the upper inequality
 follows at once from
Theorem \ref{thm:chi-etimate2}.
Furthermore,
the lower inequality follows for all prime alternating knots.

To finish the proof we need to show that the lower inequality holds for connect sums of alternating knots.
To that end let  $K\#K'$ be such a knot. By \cite{upper}, we have
$$C(K\#K^{'})\geq C(K)+C(K^{'})-1.$$
Since the
Jones polynomial is multiplicative under connect sum
we have
$T_K+T_{K^{'}}\geq T_{K\# K^{'}}$.
Hence we have
{\begin{eqnarray*}
C(K\#K^{'})
&\geq & C(K)+C(K^{'})-1=  \frac{T_K}{3}\, +\,  \frac{T_{K^{'}}}{3} \, +\, 4-2-1\\
 &\geq &   \frac{T_K+T_{K^{'}}}{3} \, + \, 1.\\
\end{eqnarray*}}
Hence the conclusion follows.

\end{proof}

\section{Calculations of crosscap numbers}\label{calculations} 

\subsection{Lower exact bounds}  In this subsection we provide constructions and examples  of families of alternating  alternating links for  whichTheorem \ref{chi-etimate1} gives the exact value of  the crosscap number.

 Let $G$ be a trivalent planar graph and let $N(G)$ denote a neighborhood of $G$ on the plane. The boundary $\partial N(G)$ is a link. For each edge of $G$ we have two parallel arcs belonging
on different components of $\partial N(G)$. 
Construct a diagram of
a new link by adding a number of half twists between these parallel  arcs of the components $\partial N(G)$ corresponding to each edge of $G$. 
Suppose that for each edge of $G$ we add at least three crossings on $D(K)$.  We can do this so that the resulting diagram is alternating. Depending on the numbers of twists we put we may obtain a knot or a multi-component link. Let $D(K)$
denote any alternating projection obtained this way and 
let $S_G$ be a state surface obtained from Algorithm \ref{AKalgor}  applied to $D(K)$: Since each twist region contains bigons,
$S_G$ corresponds to the Kauffman state of $D(K)$ that resolves all the crossings so that the bigons are state circles.  To analyze these surfaces further we need a definition.

\begin{define} A normal disk $D$ in a polyhedron $P\in \{ P_1, P_2\}$ is called an ideal triangle if $\partial D$ intersects exactly three boundary faces
of $\partial E(L)$ and it intersects no interior edges of $P$.
\end{define}

\begin{corollary} \label{exact} Let $K$ be a  $k$-component link with a prime, twist-reduced alternating diagram $D(K)$ constructed
from a trivalent planar  graph $G$ as above.
Then we have

$$C(K)=  \left\lceil \frac{t}{3} \right\rceil\, + \, \epsilon-k =\left\lceil \frac{T_K}{3} \right\rceil\, + \, \epsilon-k.
  $$
where $\epsilon =2$ if $S_G$ is non-orientable and $\epsilon=3$ otherwise.
\end{corollary}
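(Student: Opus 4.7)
The plan is to compute the Euler characteristic of the state surface $S_G$ produced by Algorithm~\ref{AKalgor} applied to $D(K)$, and then to read off $C(K)$ from Theorem~\ref{AK}. Let $V$ and $E$ denote the numbers of vertices and edges of $G$. By trivalence, $3V = 2E$, so $3\mid E$, and the twist regions of $D(K)$ are in bijection with the edges of $G$, hence $t = E$ and $\lceil t/3\rceil = t/3$.

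Every twist region of $D(K)$ contains at least two bigons by hypothesis, so Algorithm~\ref{AKalgor} always uses Step~2(a): it resolves the $c_R$ crossings of each twist region so that its $c_R - 1$ bigons become state circles, contributing $\sum_R(c_R - 1) = c - E$ state circles in total. The remaining state circles are formed by the arcs that exit the twist regions at their endpoints and close up through the trivalent vertex regions of $G$. I claim that, for the branch of the algorithm in which each twist region is resolved so that the two strands at each of its endpoints pair off locally (``cup--cap''), the six arcs meeting at every vertex $v$ of $G$---three coming from the planar structure of $D(K)$ near $v$, three from the cup--cap pairings---close up into a single state circle encircling $v$. This produces exactly $V$ additional state circles, giving $v_\sigma = c - E + V$ and hence
\[
\chi(S_G) \;=\; v_\sigma - c \;=\; V - E \;=\; -E/3 \;=\; -t/3.
\]
Using $3\mid t$, we conclude $-\chi(S_G) = \lceil t/3\rceil$.

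Having established this, Theorem~\ref{AK} delivers the conclusion. If $S_G$ is non-orientable then Theorem~\ref{AK}(1) gives $C(K) = 2 - \chi(S_G) - k = \lceil t/3 \rceil + 2 - k$, which is the stated formula with $\epsilon = 2$. If instead every maximum-Euler-characteristic surface produced by the algorithm is orientable, then Theorem~\ref{AK}(2) yields $C(K) = 3 - \chi(S_G) - k = \lceil t/3\rceil + 3 - k$, matching the formula with $\epsilon = 3$. The equality $\lceil t/3\rceil = \lceil T_K/3 \rceil$ follows from the identity $t = T_K$ for prime, twist-reduced alternating diagrams, established via \cite{dasbach-lin:volumish} in the proof of Theorem~\ref{thm:cupjones}.

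The main obstacle is the local claim at each trivalent vertex: one must analyze the two possible bigon-resolutions of an alternating twist region (which pair the two strands at each endpoint either into a local cup--cap or ``through'' the region) and verify that the cup--cap choice at each of the three twist regions adjoining a vertex $v$ produces a single closed state circle at $v$; and that this choice is a valid branch of Algorithm~\ref{AKalgor} realizing the maximum number of state circles. Drawing the local picture at a trivalent vertex makes both points transparent, but the global consistency with the algorithm's maximization requirement should be double-checked by comparing against the alternative ``through-through'' choice, whose non-bigon state circle count depends delicately on the parities of the $c_R$.
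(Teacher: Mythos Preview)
Your approach is correct and takes a genuinely different, more elementary route than the paper. The paper computes $\chi(S_G)$ by placing $S_G$ in the fully augmented link complement $E(L)$, showing that it normalizes to a union of ideal triangles (one for each trivalent vertex region, as in Figure~\ref{panel}), and then applying the combinatorial Gauss--Bonnet formula (Proposition~\ref{gaussBonnet}) to obtain $-\chi(S_G) = a(S'_G)/2\pi = t/3$. Your direct state-circle count---bigon circles plus one circle per vertex---reaches the same conclusion using only planar combinatorics and the handshake identity $3V=2E$, bypassing the augmented-link and normal-surface machinery entirely. An even quicker version of your computation: $S_G$ is exactly the ribbon surface $N(G)$ with half-twists inserted along the edge-ribbons, and inserting half-twists does not change Euler characteristic, so $\chi(S_G)=\chi(N(G))=\chi(G)=V-E=-t/3$. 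What your argument gains in simplicity it loses in illustration: the paper's route shows concretely \emph{why} the general lower bound of Theorem~\ref{chi-etimate1} is sharp here, namely because the normal disks are all ideal triangles and the Lemma~\ref{gen-estimate} estimate is an equality for them.

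Your final paragraph reflects an unnecessary worry stemming from a misconception. In a twist region with at least two crossings there is exactly \emph{one} resolution that turns the bigons into state circles, and it is the cup--cap resolution; the ``through'' resolution produces no bigon circles at all. Thus Step~2(a) of Algorithm~\ref{AKalgor} has no choice here, and since every crossing of $D(K)$ lies in a twist region with at least two bigons, the algorithm never branches: it outputs the unique surface $S_G$. (One can also observe that the triangular region at each vertex $v$ has the same checkerboard color as the bigons---opposite regions at a $4$-valent vertex share a color---so the bigon-resolution automatically turns each triangle into a state circle as well; any ``new'' bigons appearing mid-algorithm are therefore resolved consistently.) There is no alternative branch and no global-maximization check to perform.
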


\begin{proof} By assumption $D(K)$ is obtained by adding twists along components of the boundary $\partial N(G)$ of  a regular neighborhood of a planar
trivalent graph.  
The surface $S_G$ is obtained by $N(G)$ by similar twisting and the
augmented link of Lemma \ref{incomplement} is obtained by adding a component encircling  each twist region of $D(K)$.
In order to calculate $\chi(S_G)$ we need some more detailed information about  the polyhedral decomposition $\{P_1, P_2\}$ of $E(L)$ \cite{purcellsurvey}.  The surface $S_G$ gives rise to surface $S'_G$ in $E(L)$; we will  calculate $\chi(S'_G)$.
Let ${\mathcal D}$ denote the union of the crossing disks  bounded by the crossing circles of $L$.
Each disk intersects the projection plane in a single arc. We may isotope the interior of $S'_G$ so that it is disjoint from the intersections of $\mathcal D$ with the projection plane.
Let $P\in \{P_1, P_2\}$. Recall that (before truncation)  all the vertices of $P$  are of valence four 
and they correspond  to crossing circles of $L$ and to arcs of $K\setminus {\mathcal D}$ and
the faces
can be colored in a checkerboard fashion (in shaded and white) as follows: 
\begin{enumerate}

\item The shaded faces of $P$  correspond to the crossing disks: Each disk  $D$ gives  two triangular 
shaded faces of $P$ meeting at an ideal vertex corresponding to the crossing circle $\partial D$  (``bowties").
\item The edges of $P$ come from the intersections of $\mathcal D$ with the projection plane.
\item The white faces of $P$ correspond to regions  of $D(K)$ on the projection plane.
\end{enumerate}

To a vertex $v\in G$ there corresponds a triangular region of $D(K)$  that is a neighborhood of $v$, around which the three twist
regions $D(K)$,  corresponding to the edges of $G$ emanating from $v$, meet.  Let ${\mathcal D}_v$ denote the union of the three crossing disks corresponding to the three twist regions of $D(K)$ around $v$.
This triangular region will become
an ideal triangular  white face, say $D_v$,   after truncating the vertices of $P$. See Figure \ref{panel}. The ideal vertices of $D_v$ come from the arcs of $K\setminus {\mathcal D}_v $  that surround $v$. Each of the three interior edges of $P$ on $\partial D_v$  is attached to a bowtie:
two shaded faces meeting at an   ideal vertex coming from one of the crossing disks 
in ${\mathcal D}_v$.

\begin{figure}[ht]
\includegraphics[scale=.18]{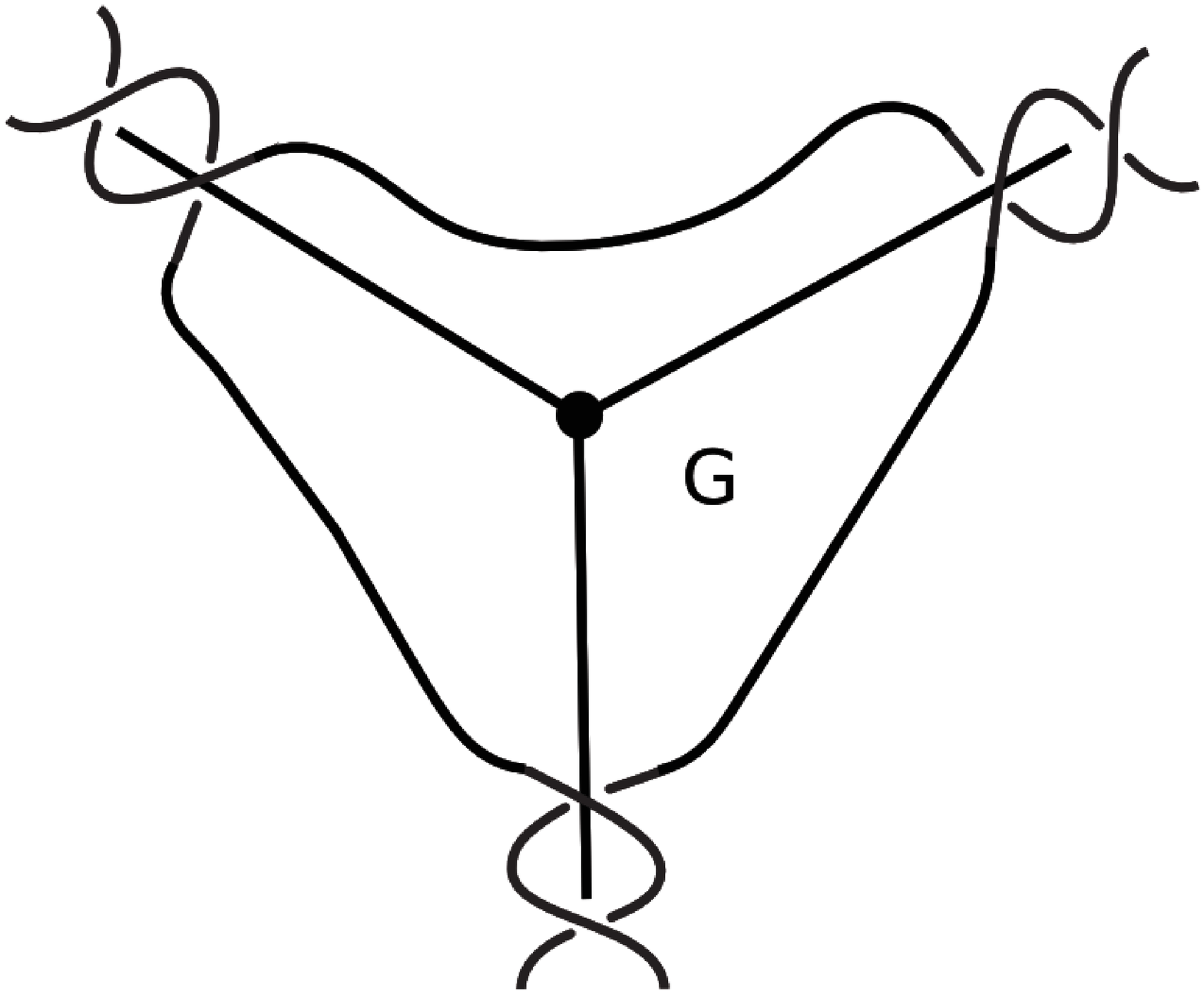}
\hspace{0.4cm}
\includegraphics[scale=.18]{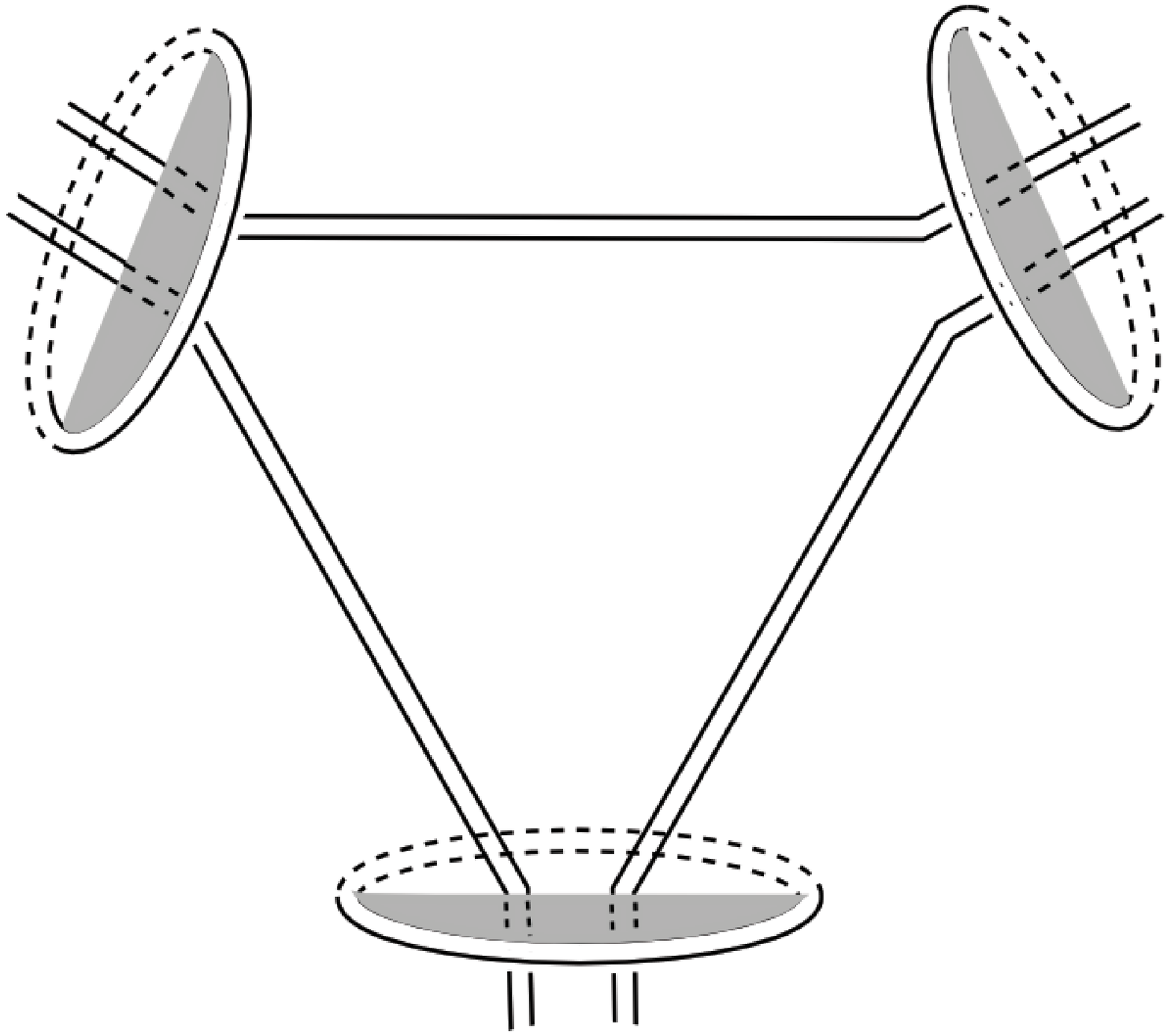}
\hspace{0.4cm}
\includegraphics[scale=.20]{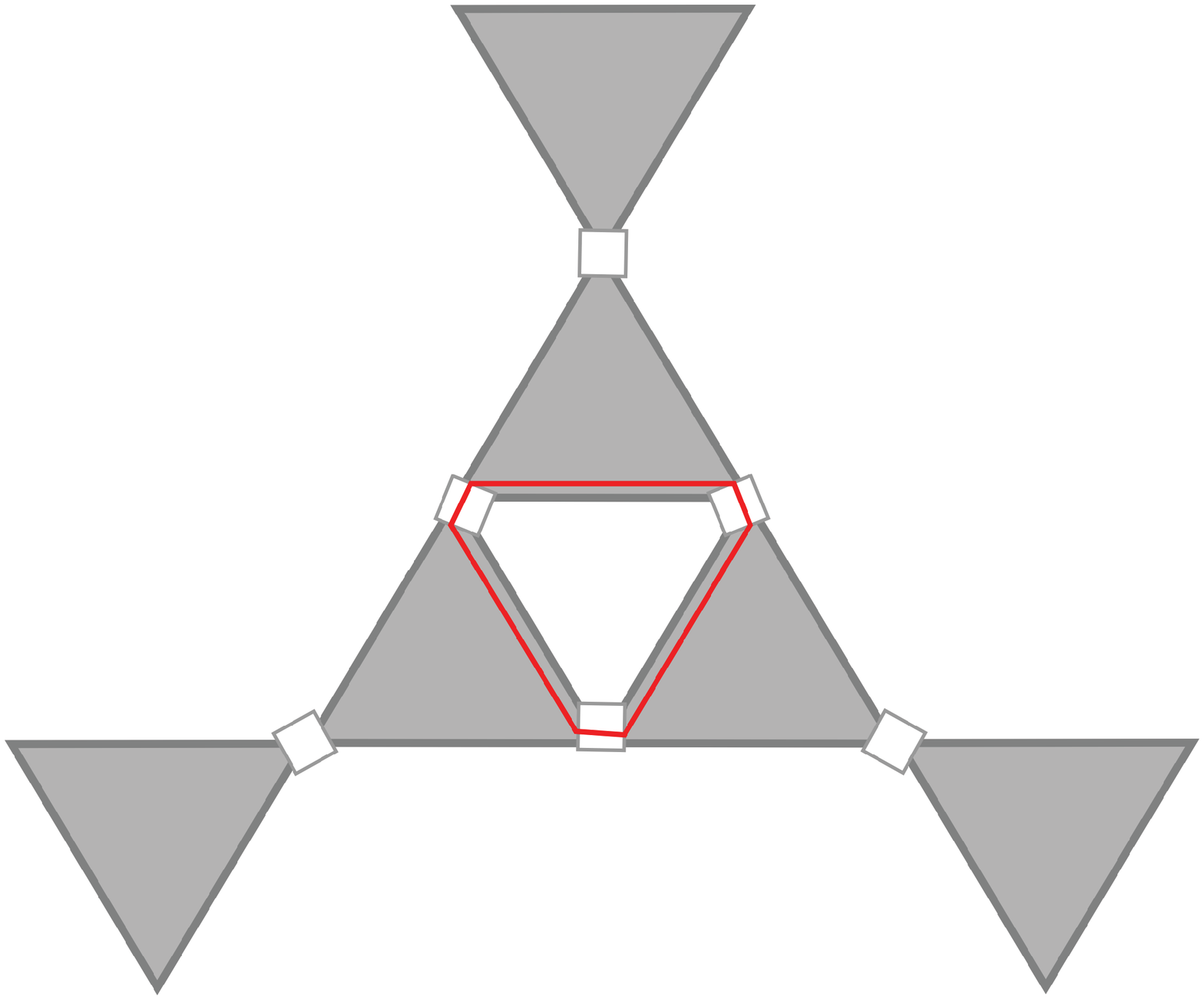}
\caption{From left to right: The portion  of $D(K)$ around a vertex of $G$,  the corresponding  potion of the augmented link
and portion of the polyhedral decomposition. An ideal triangle is indicated by the red line.}
\label{panel} 
\end{figure} 

Since the surface $S_G$ doesn't intersect the crossing circles, 
 $S'_G$  is disjoint from the ideal vertices corresponding to disks in  ${\mathcal D}_v$.
Furthermore, since we arranged so that the interior of $S'_G$ is disjoint from the intersection of ${\mathcal D}$ with the projection plane,  
$S'_G$  is disjoint from the boundary faces of $P$ corresponding to ${\mathcal D}_v$.
 It follows that after putting $S'_G$
into normal form in $P$ we will have a 
 normal triangle (an ideal triangle surrounding $D_v$) corresponding to the part of $S'_G$ around $v$.
Now $S'_G$ will consist of a collection of ideal triangles and  the only contributions to $a(S'_G)$ will come  from these  ideal triangles.
Let $\mathcal T$ be such a triangle and let $\gamma_i$, $i=1, 2, 3$,
denote the three arcs of $\partial {\mathcal T}$ on $\partial E(L)$.
By Definition \ref{area} and the definition of combinatorial lengths we have

$$a(\mathcal T)= \pi= 3\cdot \frac{\pi}{3}=\Sigma_{i=1}^3  l(\gamma_i, \mathcal T).$$
The number of ideal triangles in $S'_g$ is at most $ 2t/3$. Hence the calculation
in the proof of Theorem \ref{chi-etimate1} gives
$$a(S'_G)=  \frac{2t\pi}{3},$$
and by Theorem \ref{gaussBonnet}
we obtain $$-\chi(S_G)=-\chi(S'_G)=\left\lceil \frac{t}{3} \right\rceil.$$
Now if $S_G$ is is non-orientable then, by Theorem \ref{AK}, $C(K)=C(S_G)=-\chi(S)\, + \, 2-k$.
If $S_G$ is orientable then   
$C(K)=2g(K)+1=-\chi(S_G)+3-k$. In both cases the desired result follows, since $D(K)$ is prime and twist reduced and we have $t=T_K$.
\end{proof}

\begin{example} \label{petz}Let $D(K)=P(p_1, p_2, \dots, p_N)$ denote the standard diagram of an alternating $N$-string pretzel knot.
Suppose that, for $i=1,\ldots, N$,  $\abs{p_i}>2$. We can see that the surface  $S$ corresponding to Theorem \ref{AK}
is the pretzel surface consisting of two disks and $N$-twisted bands; one for each twist region of $D(K)$.
Augment the diagram $D(K)$  by adding  a crossing circle at each twist region so that the surface $S$ intersects
each crossing disk in a single arc only.
This surface is disjoint from the crossing circles of the fully  augmented link $L$.
Furthermore, the surface $S$ is essential in $E(K)$ and thus in $E(L)$.
Hence we may put it in normal form to calculate the area $a(S)$. By an argument similar to this in the proof of Corollary \ref{exact}, it follows that 
the contributions to $a(S)$ come from two identical, normal  $N$-gons, $D_1, D_2$
such that $\partial D_i$ intersects $N$-boundary faces of the polyhedral decomposition and it intersects no interior edges.
We have $$a(S)=a(D_i)+a(D_2)=2\pi N-4\pi=2\pi (T_K-2),$$ and thus $-\chi(S)=N-2$.
If all, but one, $p_i$ are odd the $S$ is non-orientable and thus  $C(K)=N-1=T_K-1$. 
If all the $p_i$'s are odd, then $S$ is orientable and then, $C(K)=N=T_K$. Note that the crosscap numbers of
pretzel knots have been calculated in \cite{pretzel}.
\end{example}

\subsection{Low crossing knots}The crosscap numbers of all alternating links up to 9  crossings are known.
Knotinfo \cite{knotinfo} provides an upper and a lower bound for the crosscap numbers of all knots up to 12 crossings  for which the 
exact values of crosscap numbers are not known.   Note that in all, but a handful of  cases, where the crosscap number is  not determined, the lower bound given in Knotinfo is 2. 
There are 1778 prime,  alternating knots with crossing numbers $10\leq c\leq 12$.  For these knots
we  calculated the quantity $T_K:= \abs{\beta_K} + \abs{\beta'_K}$  using the Jones polynomial value  given in Knotinfo and we compared our crosscap number lower bound with
the one given in therein. For  1472 of these knots our lower bound is better than the one given in Knotinfo
and for 283 of them our lower bound agrees with the upper bound in there;  thus we are able to calculate the exact value of the crosscap number in these cases.
These data have now been uploaded in Knotinfo by Cha and Livingston.

For example, 
in Table \ref{three}
we have all the 37 alternating knots $K$  for which Knotinfo states $2\leq C(K)\leq 3$,
 together with the value of the corresponding
quantity $T_K$. 
In all the 37 cases the lower bound above is also 3; thus for all these knots we can determine the crosscap number to be 3.

\begin{table}[]
\begin{center}
\begin{tabular}{|c|c|c|c|c|c|c|c|c|}
\hline
$K$& $T_K$ & $K$ & $T_K$ & $K$ & $T_K$ & $K$ & $T_K$  \\
\hline
$10_{85}$ &6& $10_{93}$ &6& $10_{100}$  & 6 &  $11{\textrm{a}}_{74}$ &5 \\
\hline
 $11{\textrm{a}}_{97}$ &5& $11{\textrm{a}}_{223}$ & 5 &$11{\textrm{a}}_{250}$ &5 & $11{\textrm{a}}_{259}$ &5 \\
\hline
 $11{\textrm{a}}_{263}$ & 4 &$11{\textrm{a}}_{279}$ &6 
& $11{\textrm{a}}_{293}$ &6& $11{\textrm{a}}_{313}$ & 6  \\
\hline
 $11{\textrm{a}}_{323}$ &6 & $11{\textrm{a}}_{330}$ &6& $11{\textrm{a}}_{338}$ & 4 & $11{\textrm{a}}_{346}$ & 6 \\
\hline $12{\textrm{a}}_{0636}$ &5& $12{\textrm{a}}_{0641}$ & 4 & $12{\textrm{a}}_{0753}$ &5& $12{\textrm{a}}_{0827}$ &5 \\
\hline
 $12{\textrm{a}}_{0845}$ & 5 & $12{\textrm{a}}_{0970}$ &6& $12{\textrm{a}}_{0984}$ &6 & $12{\textrm{a}}_{1017}$ & 6 \\ 
\hline
$12{\textrm{a}}_{1031}$ &5 
& $12{\textrm{a}}_{1095}$ &6  & 
$12{\textrm{a}}_{1107}$ & 6 & $12{\textrm{a}}_{1114}$ &6 \\
\hline
 $12{\textrm{a}}_{1142}$ &5 & $12{\textrm{a}}_{1171}$  &6 & $12{\textrm{a}}_{1179}$ &6 & $12{\textrm{a}}_{1205}$ &6 \\
\hline
$12{\textrm{a}}_{1220}$ & 6 & $12{\textrm{a}}_{1240}$ &6&
$12{\textrm{a}}_{1243}$ &4 & $12{\textrm{a}}_{1247}$ &6 \\
\hline
 $12{\textrm{a}}_{1285}$ & 4 &- & -& - & -&- & - \\
\hline
 \end{tabular}

\end{center}

\vskip.13in
\caption{Examples of knots where the Knotinfo upper bound agrees with our lower bound.  The crosscap number is 3.}
\label{three}
\end{table}

\section{Generalizations and questions}\label{secfive}

\subsection{Non-alternating links} 
A question arising from this work  is the question of the extent to which the Jones polynomial
(coarsely) determines 
the crosscap number outside the class of alternating links. This is an interesting question that merits further 
investigation. Our contribution towards an answer to this question, in this paper, is
to provide a generalization of Theorem \ref{thm:cupjones} 
for some class of non-alternating links. To state our result we need a definition.
\begin{define} For a link diagram $D(K)$ let $S_A$ denote the state surface corresponding to the Kauffman state where all the crossings are resolved one way
and let $S_B$ denote the state surface corresponding to the state where all crossings are resolved the opposite way.

A link $K$ is called  \emph{adequate} if it admits a link diagram $D(K)$ such that none of  $S_A, S_B$ contains a half-twisted band
with both ends  attached on the same state circle.

Adequate links form a large class that contains the alternating ones but it is much wider. See \cite{lick-thistle, fkp:filling, fkp}.
\end{define}

\begin{theorem}\label{thm:cupjonesadequate}
 Let $K$ be a  $k$-component link with crosscap number $C(K)$ and let $T_K$ be as above.
Suppose that $K$ admits a connected, twist-reduced, diagram $D(K)$
that has  $t\geq 2$ twist regions, and such that each twist region of $D(K)$ contains at least six crossings.
We have

$$ \left\lceil \frac{t}{3} \right\rceil\, + \, 2-k \, \leq C(K) \, \leq \, t+2-k.
  $$
If moreover $D(K)$ is adequate then we have

$$\left\lceil \frac{ T_K}{6}\right\rceil \, +\, 2 \, -\, k \; \leq \; C(K) \; \leq \;  3T_K\, -\, k\, -\, 1.
$$

 \end{theorem}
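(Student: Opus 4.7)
The proof proceeds in two stages: first establishing the bounds in terms of the twist number $t$, then translating to bounds in terms of $T_K$ using the graph-theoretic description of the second and penultimate Jones coefficients available for adequate diagrams.

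For the upper bound $C(K)\le t+2-k$, I would pick the Kauffman state $\sigma$ that resolves each twist region so that every one of its bigons becomes a state circle; this is possible since every twist region has at least $6\ge 2$ crossings. The same counting as in the proof of Lemma~\ref{uppert} then gives $-\chi(S_\sigma)\le t-1$. If $S_\sigma$ is non-orientable we conclude at once; otherwise attaching a single half-twisted band produces a non-orientable spanning surface whose $-\chi$ has increased by one, giving $C(K)\le t+2-k$. This state-surface argument does not rely on alternating-ness of $D(K)$.

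For the lower bound $C(K)\ge \lceil t/3\rceil + 2 - k$, I would let $S$ be a non-orientable spanning surface of $K$ realizing $C(K)$ and construct an augmented link $J$ of $D(K)$ with fully augmented link $L$. The aim is to produce a non-orientable spanning surface $S^*\subset E(J)\cong E(L)$ that is in normal form with respect to the polyhedral decomposition of Section~\ref{sectwo}, with $C(S^*)=C(K)$; for then Corollary~\ref{twisted}, applicable because every twist region has at least six crossings, immediately yields $C(K)=C(S^*)\ge \lceil t/3\rceil + 2 - k$. Constructing $S^*$ has two sub-steps. First, make $S$ transverse to the crossing circles and run innermost-disk and innermost-arc surgeries inside each crossing disk---the hypothesis $\tau\ge 6$ provides the geometric room---to push $S$ off all crossing circles, verifying along the way, as in Lemma~\ref{normal}, that any compression either preserves $\chi$ and non-orientability (up to discarding closed orientable components) or produces an orientable Seifert surface realizing $C(K)=2g(K)+1$, in which case Theorem~\ref{chi-etimate} applied with $n=0$ gives the stronger bound $\lceil t/3\rceil+3-k$. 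Second, once $S$ has been isotoped into $E(L)$, apply Lemma~\ref{normal} itself, whose statement and proof require only a prime, twist-reduced diagram rather than an alternating one, to put $S$ into normal form without changing $\chi$ or orientability.

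For the $T_K$-bounds under the adequate hypothesis, I would combine the $t$-bounds above with the formulae expressing $|\beta_K|$ and $|\beta_K'|$ as $e(\GRA)-v(\GRA)+1$ and $e(\GRB)-v(\GRB)+1$ in terms of the reduced $A$- and $B$-state graphs of the adequate diagram. For an adequate diagram with every twist region containing at least six crossings these yield linear comparisons between $T_K$ and $t$, which when substituted into the $t$-bounds produce the stated inequalities $\lceil T_K/6\rceil + 2 - k \le C(K) \le 3T_K - k - 1$. The main obstacle of the entire argument is the first sub-step of the lower bound---pushing $S$ off the crossing circles while preserving the minimality of $C(S)$---which is exactly where the $\tau\ge 6$ hypothesis enters essentially, both in providing the geometric room for the surgery arguments and in driving the combinatorial length estimate of Lemma~\ref{gen-estimate} that powers Corollary~\ref{twisted}.
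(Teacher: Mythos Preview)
Your upper-bound argument and your reduction of the $T_K$-inequalities to the $t$-inequalities are essentially the same as the paper's (the paper cites \cite[Theorem 1.5]{fkp:filling} for the comparison $\lceil t/3\rceil\le T_K\le 2t$, which is exactly the ``linear comparison'' you allude to).

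The lower-bound argument, however, has a genuine gap. You propose to start from a non-orientable $S$ realizing $C(K)$ and then ``push $S$ off all crossing circles'' by innermost surgeries on each crossing disk, asserting that $\tau\ge 6$ ``provides the geometric room.'' But the crossing disk $D$ is a twice-punctured disk regardless of $\tau$; the number of crossings in the twist region is irrelevant to the topology of $D$ or to whether $S$ can be made disjoint from $\partial D$. After innermost moves you are still left with essential components of $S\cap D$---for instance arcs with both endpoints on $C=\partial D$ that separate the two punctures, or circles encircling one puncture---and these cannot be surgered away inside $E(K)$. Concretely, for an orientable spanning surface one has $|S\cap C|\ge |\mathrm{lk}(K,C)|$, and $\mathrm{lk}(K,C)=\pm 2$ whenever the two strands through the twist region are coherently oriented; for the non-orientable surfaces you want to use, there is no argument given (and none apparent) that $S$ can be made disjoint from $C$. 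Thus you cannot reduce to the $n=0$ case, and Lemma~\ref{normal}, which presupposes $S\subset E(J)$, does not apply.

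The paper's approach is different and avoids this obstruction entirely. One takes $S$ of maximal Euler characteristic among \emph{all} spanning surfaces (orientable or not); such a surface is automatically incompressible and $\partial$-incompressible in $E(K)$. One then allows $S$ to intersect the crossing circles and shows that the resulting punctured surface $F\subset E(L)$ is essential, hence normalizable. The hypothesis $\tau\ge 6$ enters only at the very end, via the estimate in Corollary~\ref{twisted}/Theorem~\ref{chi-etimate}: the extra boundary components of $F$ on crossing-circle tori contribute the term $n\tau/6 - n$ to the bound on $-\chi(S)$, and $\tau\ge 6$ makes this non-negative. So the role of $\tau\ge 6$ is not to eliminate the intersections with crossing circles, but to ensure that those intersections only help the inequality.
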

\begin{proof} Let $S$ be a spanning surface of $K$ that is of maximal Euler characteristic over all spanning surfaces (that is both orientedable and non-orientedable).
Then $S$ gives gives an incompressible and $\partial$-incompressible surface in $E(K)$. For, surgery of $S$ along a compression  or a $\partial$-compression disk
will produce a surface of higher Euler characteristic (compare proof of Lemma \ref{normal}). 
Let $L$ be a fully  augmented link obtained from $D(K)$. Recall that each crossing circle added  in this process bounds a disk $D$ whose interior
is pierced exactly twice by $K$. We will isotope $S$ in the complement of $K$ so that $S\cap D$ is minimized. 
In this process, since  $S$ is incompressible, if  there is a simple closed curve in $\delta \subset S\cap D$
that bounds a disk in $D$ with its interior disjoint from $S$, then we can eliminate $\delta$ by isotopy of $S$ in the complement of $L$.
Similarly we can eliminate arc components of $S\cap D$ that cut off disks on $D$ with their interior disjoint from $S$.
Finally, simple closed curves
that are parallel to $\partial D$ can be eliminated by sliding $S$ off the boundary of $D$.

The surface $S$ gives rise to a surface $F$ in $E(L)$. We claim that 
$F$ is incompressible and $\partial$-incompressible  in $E(L)$. 
To see that, suppose that there is an essential simple closed curve $\gamma \subset S$ 
that bounds a compressing disk in $\Delta\subset E(L)$. Since $S$ is incompressible, $\gamma$ must bound a disk $\Delta' \subset S$ whose interior is 
intersected by the crossing circles of $L$. Now $\Delta\cup \Delta'$ bounds a 3-ball that can be used to produce an isotopy that reduces the intersection of $S$
and the crossing disks of $L$; contradiction.

Now we argue that $F$ is $\partial$-incompressible. To that end, suppose that $F$ admits a $\partial$-compression disk
$\Delta$, with $\partial \Delta=\gamma \cup \delta$, where $\gamma$ is a spanning arc in $F$ and $\delta$ is an arc on
a component $T \subset \partial E(L)$. Assume, for a moment,  that $T$ corresponds to a component of $K$. Since $S$ is $\partial$-incompressible in $E(K)$, the arc $\gamma$ must cut a disk $\Delta'\subset S$ whose interior is pieced by the crossing circles of $L$.
Since $S$ is incompressible, the boundary of the disk $\Delta\cup \Delta'$ also bounds a disk $\Delta''\subset S$. Now $\Delta''\cup \Delta\cup \Delta'$ bounds a 3-ball that can be used to produce an isotopy that reduces the intersections of $S$ with the crossing disks of $L$. This is a contradiction.

Suppose now that $T$ is a component of $\partial E(L)$ that corresponds to a crossing circle of $L$.
Since $S$ intersects crossing circles an even number of times, $T\setminus \partial S$ has at least two components.
Thus $\delta$ lies on an annulus of $A\subset T\setminus \partial S$ and either it cuts off a disk on $A$ or it runs between different components of $A$. Now the usual argument that shows that an orientable, spanning surface of a link  that is incompressible,
has to be $\partial$-incompressible applies to obtain a contradiction (see \cite[Lemma 1.10]{hatcher}). Thus
the punctured surface $S$ is essential in $E(L)$.

Now we may replace $S$ by a surface, of the same orientability and Euler characteristic,
that  is in normal form with respect to the polyhedral
decomposition of $E(L)$ \cite[Theorem 2.8]{fg:arborescent}. Now  Corollary \ref{twisted} applies.

If $S$ is non-orientable, we have $C(S)=C(K)$ and by Corollary \ref{twisted}
we have $$C(K)\geq \left\lceil  \frac{t}{3}\right\rceil \, +\,  2\, -\, k .$$
If $S$ is orientable then $C(K)=2g(S)+1=2g(K)+1$ and by Theorem \ref{chi-etimate} again we have $C(K)\geq \left\lceil  \frac{t}{3}\right\rceil \, +\,  2\, -\, k .$
Combining these inequalities with Lemma \ref{uppert} we have
$$ \left\lceil \frac{t}{3} \right\rceil\, + \, 2-k \, \leq C(K) \, \leq \, t+2-k,
  $$
\noindent which proves the first part of the theorem. Suppose now that
$D(K)$ is also adequate. Then  \cite[Theorem 1.5]{fkp:filling} implies that

$$\left\lceil \frac{t}{3} \right\rceil\,  \, \leq T_K  \, \leq \, 2t.
  $$
Now combining the last two inequalities gives the desired result.
  \end{proof}

Theorem \ref{thm:cupjonesknots} should be compared with Murasugi's classical result \cite{murasugi} that the Alexander polynomial 
determines the Seifert genus of alternating knots. The Alexander polynomial doesn't determine the genus of non-alternating knots.
However the Knot Floer Homology (the categorification of the Alexander polynomial) determines the genus of all knots \cite{OS}.
A related question is the question of whether the  Khovanov homology \cite{Khovanov} of   knots (the categorification of the Jones polynomial)
is related to the crosscap number and the extent to which the former determines the later.  
 We ask  the following questions:
\begin{enumerate}
\item  For which links does the Jones polynomial (coarsely) determines $C(K)$?
\smallskip
\smallskip

\item Are there two sided bounds of $C(K)$ of \emph{every} link $K$ in terms of the Khovanov homology of $K$?
\end{enumerate}

\subsection{Combinatorial area and colored Jones polynomials} 
The colored Jones polynomial of  a link $K$, is a sequence of Laurent polynomial invariants.
$$J^n_K(t)= \alpha_n t^{j(n)}+ \beta_n t^{j(n)-1}+ \ldots + \beta'_n
t^{j'(n)+1}+ \alpha'_n t^{j'(n)}, \ \ \ n=1,2,\ldots$$
with $J^2_K(T)$ being the ordinary Jones polynomial.
It is known that, for every $i>0$, the absolute values of the $i$-th and the $i$-th to last coefficients of  
$J^n_K(t)$ stabilize when $i>n$ \cite{Armond, gaLe}.
For instance we have $\abs{\beta'_K}:=\abs{\beta'_n}$, and $\abs{\beta_K}:=\abs{\beta_n}$
for $n>1$.
The proofs of Theorem \ref{thm:cupjones},  Corollary \ref{exact}, as well as  Example \ref{petz}, indicate that the quantity
 $$\frac{T_K}{2\pi}=\frac{\abs{\beta'_K}+\abs{\beta_K}}{2\pi},$$
is related to combinatorial areas of  a normal surfaces in  augmented links   of $K$. 
One may ask whether the higher order stabilized coefficients of the colored Jones polynomials have similar interpretations.
We will investigated this question in a future paper.

\vskip 0.07in

\noindent {\bf Acknowledgement.}We thank Dave Futer and Jessica Purcell for several conversations and clarifications on the geometry of augmented links.
Part of the results in this paper  were obtained while Kalfagianni was visiting the Erwin Schr\"odinger International Institute for Mathematical Physics during the program
 ``Combinatorics, Geometry and  Physics" in the summer of 2014. She thanks the organizers of the program and the staff at ESI for providing excellent working conditions.

\bibliographystyle{plain} \bibliography{biblio}
\end{document}